\theoremstyle{plain}
\newtheorem{definition}{Definition}
\newtheorem{example}{Example}
\newtheorem{proposition}{Proposition}
\newtheorem{theorem}{Theorem}
\numberwithin{equation}{section}
\begin{document}
\title{A new approach to soft set through applications of cubic set }
\author{Saleem Abdullah}
\address{Department of Mathematics, Quaid-i-Azam University, Islamabad
Pakistan}
\email{saleemabdullah81@yahoo.com}
\author{Imran Khan}
\address{Department of Mathematics, Quaid-i-Azam University, Islamabad
Pakistan}
\email{imran\_maths2002@yahoo.com}
\author{Muhammad Aslam}
\address{Department of Mathematics, Quaid-i-Azam University, Islamabad
Pakistan}
\email{draslamqau@yahoo.com}
\keywords{Fuzzy set, Cubic set, Soft set, Cubic soft, Internal cubic soft
set, External cubic soft set}

\begin{abstract}
The notions of (internal, external) cubic soft sets,
P-(R-)order,P-(R-)union, P-(R-)intersection and P-OR, R-OR, P-AND and R-AND
are introduced, and related properties are investigated. We show that the
P-union and the P-intersection of internal cubic soft sets are also internal
cubic soft sets. We provide conditions for the P-union (resp.
P-intersection) of two external cubic soft sets to be an internal cubic soft
set. We give conditions for the P-union (resp. R-union and R-intersection)
of two external cubic soft sets to be an external cubic soft set. We
consider conditions for the R-intersection (resp.P-intersection) of two
cubic sof sets to be both an external cubic soft set and an internal cubic
soft set.
\end{abstract}

\maketitle

\section{INTRODUCTION}

In order to deal with many complicated problems in the fields of
engineering, social science, economics, medical science etc involving
uncertainties, classical methods are found to be inadequate in recent times.
Molodstov \cite{8} pointed out that the important existing theories viz.
probability theory, fuzzy set theory, intuitionistic fuzzy set theory, rough
set theory etc, which can be considered as mathematical tools for dealing
with uncertainties, have their own difficulties. He further pointed out that
the reason for these difficulties is, possibly, the inadequacy of the
parameterization tool of the theory. In 1999 he proposed a new mathematical
tool for dealing with uncertainties which is free of the difficulties
present in these theories. He introduced the novel concept of soft sets and
established the fundamental results of the new theory. He also showed how
soft set theory is free from parameterization inadequacy syndrome of fuzzy
set theory, rough set theory and probability theory etc. Many of the
established paradigms appear as special cases of soft set theory. In 2003,
P. K .Maji, R. Biswas and A. R. Roy \cite{7} studied the theory of soft sets
initiated by Molodstov. They defined equality of two soft sets, subset and
super set of a soft set, complement of a soft set, null soft set, and
absolute soft set with examples. Soft binary operations like AND, OR and
also the operations of union, intersection were also defined. Pei and Miao 
\cite{9} and Chen et al. \cite{3} improved the work of Maji et al. \cite{5,
7}. In 2009, M. Irfan Ali et al., \cite{2} gave some new notions such as the
restricted intersection, the restricted union, the restricted difference and
the extended intersection of two soft sets along with a new notion of
complement of a soft set. Sezgin and Atag\"{u}n \cite{op} studied on soft
set operations. Babitha and Sunil introduced soft set relations and
functions \cite{babit}. Majumdar and Samanta, worked on soft mappings \cite%
{majum} were proposed and many related concepts were discussed too.
Moreover, the theory of soft sets has gone through remarkably rapid strides
with a wide-ranging applications especially in soft decision making as in
the following studies: \cite{cag-dec,cag-redef} and some other fields such
as \cite{fee,fee2,f1,f2}. Since its inception, it has received much
attention in the mean of algebraic structures. In \cite{2A}, Akta\d{s} and 
\d{C}a\u{g}man applied the concept of soft set to groups theory and
introduced soft group of a group. Feng et.al, studied soft semirings by
using soft set \cite{3A}. Recently, Acar studied soft rings \cite{4A}. Jun
et. al, applied the concept of soft set to BCK/BCI-algebras \cite%
{jun1,jun2,jun3}. Sezgin and Atag\"{u}n initiated th concept of normalistic
soft groups \cite{op1}. Zhan et.al, worked on soft ideal of BL-algebras \cite%
{zhan}. In \cite{kazanc}, Kazanc\i\ et. al, used the concept of soft set to
BCH-algebras. Sezgin et. al, studied soft nearrings \cite{sez-near2}. Atag%
\"{u}n and Sezgin \cite{atag} defined the concepts of soft subrings and
ideals of a ring, soft subfields of a field and soft submodules of a module
and studied their related properties with respect to soft set operations
also union soft substructues of nearrings and nearring modules are studied
in \cite{aslii2}. \d{C}a\u{g}man et al. defined two new types of group
actions on a soft set, called group $SI$-action and group $SU$-action \cite%
{cag-fil}, which are based on the inclusion relation and the intersection of
sets and union of sets, respectively.Recently, Sezgin described a new view
of ring through soft intersection properties and discussed some fundamental
results \cite{sezgin}. The concept of soft equality and some related
properties are derived by Qin and Hong

In recent times, researches have contributed a lot towards fuzzification of
soft set theory. Maji et al. \cite{6} introduced some properties of fuzzy
soft set. These results were further revised and improved by Ahmad and
Kharal \cite{1}. This theory has proven useful in many different fields such
as decision making \cite{ASDF,AAA,BBB,CCC,FFF,CEE,XGZ}. In \cite{4},Y. B.
Jun et al., introduced a new notion, called a (internal, external) cubic set
by using a fuzzy set and an interval-valued fuzzy set, and investigate
several properties. They also defined P-union, P-intersection, R-union and
R-intersection of cubic sets, and investigate several related properties.

In this paper, using a fuzzy set and an interval-valued fuzzy set, we
introduce new notions, is called (internal, external) cubic soft sets,
P-(R-)order,P-(R-)union, P-(R-)intersection and P-OR, R-OR, P-AND and R-AND
are introduced, and related properties are investigated. We show that the
P-union and the P-intersection of internal cubic soft sets are also internal
cubic soft sets. We provide conditions for the P-union (resp.
P-intersection) of two external cubic soft sets to be an internal cubic soft
set. We give conditions for the P-union (resp. R-union and R-intersection)
of two external cubic soft sets to be an external cubic soft set. We
consider conditions for the R-intersection (resp.P-intersection) of two
cubic sof sets to be both an external cubic soft set and an internal cubic
soft set.

\section{Preliminaries}

We introduce below necessary notions and present a few auxiliary results
that will be used throughout the paper. Recall first the basic terms and
definitions from the cubic set theory.

A map $\lambda :X\rightarrow \left[ 0,1\right] $ is called a fuzzy subset of 
$X.$ For any two fuzzy subsets $\lambda $ and $\mu $ of $X$, $\lambda \leq
\mu $ means that, for all $x\in X$, $\lambda (x)\leq \mu (x)$. The symbols $%
\lambda \wedge \mu $, and $\lambda \vee \mu $ will mean the following fuzzy
subsets of $X$

\begin{center}
$(\lambda \wedge \mu )(x)=\lambda (x)\wedge \mu (x)$

$(\lambda \vee \mu )(x)=\lambda (x)\vee \mu (x)$
\end{center}

for all $x\in X.$

Let $X$ be a non-empty set. A function $A$ $:X\rightarrow \lbrack I]$ is
called an interval-valued fuzzy set (shortly, an IVF set) in $X$. Let $%
[I]^{X}$ stands for the set of all IVF sets in $X$. For every $A\in \lbrack
I]^{X}$ and $x\in X$, $A(x)=[A^{-}(x),A^{+}(x)]$ is called the degree of
membership of an element $x$ to $A$, where $A^{-}:X\rightarrow I$ and $%
A^{+}:X\rightarrow I$ are fuzzy sets in $X$ which are called a lower fuzzy
set and an upper fuzzy set in $X$, respectively. For simplicity, we denote $%
A=[A^{-},A^{+}]$. For every $A,B\in \lbrack I]^{X}$, we define $A\subseteq B$
if and only if $A(x)\preceq B(x)$ for all $x\in X$. \cite{LA23}

\begin{definition}
\cite{LA23}Let $A=[A^{-},A^{+}],$ and $B=[B^{-},B^{+}]$ be two interval
valued fuzzy sets in $X$ (briefly, IVF sets). Then, we define $r\min
\{A(x),B(x)\}=[\min \{A^{-}(x),B^{-}(x)\},\min \{A^{+}(x),B^{+}(x)\}],$

$r\max \{A(x),B(x)\}=[\max \{A^{-}(x),B^{-}(x)\},\max
\{A^{+}(x),B^{+}(x)\}]. $
\end{definition}

\begin{definition}
\cite{LA23}Let $A=[A^{-},A^{+}],$ and $B=[B^{-},B^{+}]$ be two interval
valued fuzzy sets in $X$ (briefly, IVF sets). Then, we define
\textquotedblleft\ $\preceq $\textquotedblright , and \textquotedblleft\ $%
\succeq $\textquotedblright , as $A(x)\preceq B(x)$ if and only if $%
A^{-}(x)\leq B^{-}(x)$ and $A^{+}(x)\leq B^{+}(x)$. Similarly, $A(x)\succeq
B(x)$ if and only if $A^{-}(x)\geq B^{-}(x)$ and $A^{+}(x)\geq B^{+}(x)$ for
all $x\in X.$
\end{definition}

\begin{definition}
\cite{4}Let $X$ be a non-empty set. By a cubic set in $X,$ we mean a
structure $\mathcal{A}=\{<x,A(x),\lambda (x)>:x\in X\}$ in which $A$ is an
interval valued fuzzy sets in $X$ (briefly, IVF set) and $\lambda $ is a
fuzzy set in $X$. A cubic set $\mathcal{A}=\{<x,A(x),\lambda (x)>:x\in X\}$
is simply denoted by $\mathcal{A}=<A,\lambda >.$ The collection of all cubic
sets in $X$ is denoted by $CP\left( X\right) $. A cubic set $\mathcal{A}=<A,$
$\lambda >$ in which for all $x\in X$ $A(x)=0$ and $\lambda (x)=1$
(respectively $A(x)=1$ and $\lambda (x)$ $=0$) for all $x\in X$ is denoted
by $\overset{{\LARGE ..}}{0}$ (respectively $\overset{{\LARGE ..}}{1}$). A
cubic set $\mathcal{B}=<B,$ $\mu >$, in which $B(x)=0$ and $%
\mu
(x)=0$ (respectively $B(x)=1$ and $%
\mu
(x)=1$) is denoted by $\overset{\wedge }{0}$ (respectively $\overset{\wedge }%
{1}$)$.$
\end{definition}

From now on, $X$ refers to an initial universe, $E$ is a set of parameters, $%
P(X)$ is the power set of $X$ and $A,B,C\subset E$. A soft set $\lambda _{A}$
over $X$ is a set defined by%
\begin{equation*}
\lambda _{A}:E\longrightarrow P(X)\text{ such that }\lambda _{A}\left(
x\right) =\emptyset \text{ if }x\notin A.
\end{equation*}%
Here $\lambda _{A}$ is also called an approximate function. A soft set over $%
X$ can be represented by the set of ordered pairs%
\begin{equation*}
\lambda _{A}=\left\{ \left( x,\lambda _{A}\left( x\right) \right) :x\in
E,\lambda _{A}\left( x\right) \in P(X)\right\}
\end{equation*}%
It is clear to see that a soft set is a parametrized family of subsets of
the set $X$. Note that the set of all soft sets over $X$ will be denoted by $%
S(X)$ \cite{8}. Let $A$ be a set of parameters. $\{\lnot a:a\in A\}$ is
called the Not-set of $A$ denoted by $\lnot A$, where $\lnot a$ means not $a$
for each $a\in A$. It is necessary to assume that the Not-set of each set of
parameters is a subset of $E,$ where $E$ is the set of parameters. For
further notions and concepts of fuzzy set, Interval valued fuzzy set cubic
set and soft set see \cite{2,4,8,2A,op,LA24,LA23}.

\section{CUBIC SOFT SETS}

In this section we apply cubic set to soft set. We define a new extension of
fuzzy soft by using cubic set. We defined two types of cubic soft set and
related properties.

\begin{definition}
A pair $\left( \widetilde{F,}\text{ }I\right) $ is called cubic soft set
over $X$ if and only if $\widetilde{F}$ is a mapping of $I(\subseteq E)$
into the set of all cubic sets in $X,$ i.e., $\widetilde{F}:I\longrightarrow
CP\left( X\right) $ where $I$ is any subset of parameter's set $E$, $X$ is
an initial universe set and $CP\left( X\right) $ is the collection of all
cubic sets in $X$. Here we denote and define cubic soft set as $\left( 
\widetilde{F,}\text{ }I\right) =\left\{ \widetilde{F}(e_{i})=\mathcal{A}%
_{i}=\left\{ <x,\text{ }A_{e_{i}}(x),\text{ }\lambda _{e_{i}}(x)>:\text{ }%
x\in X\right\} \text{ }e_{i}\in I\right\} $ in this set corresponding to
each $e_{i}\in I$, $\mathcal{A}_{i}=\left\{ <x,\text{ }A_{e_{i}}(x),\text{ }%
\lambda _{e_{i}}(x)>:\text{ }x\in X\right\} $ is a cubic set in $X$ in which 
$A_{e_{i}}(x)$ is an interval valued fuzzy set (briefly, an IVF set) and $%
\lambda _{e_{i}}(x)$ is a fuzzy set.
\end{definition}

\begin{example}
Let $X=\left\{ p_{1},\text{ }p_{2},\text{ }p_{3},\text{ }p_{4}\right\} $ be
the set of cricket players under consideration and $E=\left\{ e_{1},\text{ }%
e_{2},\text{ }e_{3},\text{ }e_{4}\right\} $ be the set of parameters, where $%
e_{1,}$ $e_{2},$ $e_{3,\text{ }}e_{4}$ represent fitness, good current form,
good domestic cricket record and good moral character, respectively. Let $%
I=\left\{ e_{1},\text{ }e_{2},\text{ }e_{3}\right\} \subseteq E.$ Then, the
cubic soft set

$\left( \widetilde{F,}\text{ }I\right) =\left\{ \widetilde{F}(e_{i})=%
\mathcal{A}_{i\text{ }}=\left\{ <p,\text{ }A_{_{e_{i}}\text{ }}(p),\text{ }%
\lambda _{e_{i}\text{ }}(p)>:\text{ }p\in X\right\} \text{ }e_{i}\in
I,i=1,2,3.\right\} $ in $X$ is%
\begin{equation*}
\begin{tabular}{|l|l|l|l|}
\hline
$p$ & $%
\begin{array}{c}
\widetilde{F}(e_{1})=\mathcal{A}_{1}= \\ 
<A_{e_{1}}(p),\text{ \ \ \ \ \ }\lambda _{e_{1}}(p)>%
\end{array}%
$ & $%
\begin{array}{c}
\widetilde{F}(e_{2})=\mathcal{A}_{2}= \\ 
<A_{e_{2}}(p),\text{ \ \ \ \ \ }\lambda _{e_{2}}(p)>%
\end{array}%
$ & $%
\begin{array}{c}
\widetilde{F}(e_{3})=\mathcal{A}_{3}= \\ 
<A_{e_{3}}(p),\text{ \ \ \ \ \ }\lambda _{e_{3}}(p)>%
\end{array}%
$ \\ \hline
$p_{1}$ & $\ \left[ 0.3,0.7\right] ,\text{\ \ \ \ \ \ \ }0.5$ & $\ \left[
0.6,1\right] ,\text{ \ \ \ \ \ \ \ \ \ \ }0.3$ & $\ \left[ 0.3,0.7\right] ,$
\ \ \ \ \ \ \ $0.5$ \\ \hline
$p_{2}$ & $\ \left[ 0.4,0.8\right] ,\text{ \ \ \ \ \ \ }0.2$ & $\ \left[
0.1,0.6\right] ,\text{ \ \ \ \ \ \ \ }0.6$ & $\ \left[ 0.5,0.9\right] ,\text{
\ \ \ \ \ \ \ }0.9$ \\ \hline
$p_{3}$ & $\ \left[ 0.1,0.6\right] ,\text{ \ \ \ \ \ \ }0.6$ & $\ \left[
0.3,0.7\right] ,\text{ \ \ \ \ \ \ \ \ \ }1$ & $\ \left[ 0.4,8\right] ,\text{
\ \ \ \ \ \ \ \ \ }0.7$ \\ \hline
$p_{4}$ & $\ \left[ 0.5,0.9\right] ,\text{ \ \ \ \ \ \ }0.5$ & $\ \left[
0.2,0.6\right] ,\text{ \ \ \ \ \ \ \ }0.7$ & $\ \left[ 0.6,1\right] ,\text{\
\ \ \ \ \ \ \ \ \ }0.9$ \\ \hline
\end{tabular}%
\end{equation*}
\end{example}

\subsection{Types of cubic soft sets}

\subsubsection{Internal cubic soft set (ICSS)}

\begin{definition}
A cubic soft set $\left( \widetilde{F,}\text{ }I\right) $ is said to be an
internal cubic soft set (ICSS), if for all $e_{i}\in I\subseteq E$ ($E$ is
set of parameters) $F(e_{i})=\mathcal{A}_{i}$ is so that $%
A_{e_{i}}^{-}(x)\leq \lambda _{e_{i}}(x)\leq A_{e_{i}}^{+}(x)$ for all $%
e_{i}\in I$ and for all $x\in X$.
\end{definition}

\begin{example}
Let $X=\left\{ p_{1},\text{ }p_{2},\text{ }p_{3},\text{ }p_{4}\right\} $ be
the set of cricket players under consideration and $E=\left\{ e_{1},\text{ }%
e_{2},\text{ }e_{3},\text{ }e_{4}\right\} $ be the set of parameters, where $%
e_{1,}$ $e_{2},$ $e_{3,\text{ }}e_{4}$ represent fitness, good current form,
good domestic cricket record and good moral character, respectively. Let $%
I=\left\{ e_{1},\text{ }e_{2},\text{ }e_{3}\right\} \subseteq E.$ Then, the
cubic soft set

$\left( \widetilde{F,}\text{ }I\right) =\left\{ \widetilde{F}(e_{i})=%
\mathcal{A}_{i\text{ }}=\left\{ <p,\text{ }A_{_{e_{i}}\text{ }}(p),\text{ }%
\lambda _{e_{i}\text{ }}(p)>:\text{ }p\in X\right\} \text{ }e_{i}\in I,\text{
}i=1,2,3.\right\} $ in $X$ is%
\begin{equation*}
\begin{tabular}{|l|l|l|l|}
\hline
$p$ & $%
\begin{array}{c}
\widetilde{F}(e_{1})=\mathcal{A}_{1}= \\ 
<A_{e_{1}}(p),\text{ \ \ \ \ \ }\lambda _{e_{1}}(p)>%
\end{array}%
$ & $%
\begin{array}{c}
\widetilde{F}(e_{2})=\mathcal{A}_{2}= \\ 
<A_{e_{2}}(p),\text{ \ \ \ \ \ }\lambda _{e_{2}}(p)>%
\end{array}%
$ & $%
\begin{array}{c}
\widetilde{F}(e_{3})=\mathcal{A}_{3}= \\ 
<A_{e_{3}}(p),\text{ \ \ \ \ \ }\lambda _{e_{3}}(p)>%
\end{array}%
$ \\ \hline
$p_{1}$ & $\ \left[ 0.2,0.5\right] ,\text{\ \ \ \ \ \ \ }0.35$ & $\ \left[
0.1,0.4\right] ,\text{\ \ \ \ \ \ \ \ }0.3$ & $\ \left[ 0.4,0.7\right] ,%
\text{\ \ \ \ \ \ \ \ }0.5$ \\ \hline
$p_{2}$ & $\ \left[ 0.5,0.8\right] ,\text{\ \ \ \ \ \ \ }0.65$ & $\ \left[
0.3,0.6\right] ,\text{\ \ \ \ \ \ \ \ }0.5$ & $\ \left[ 0.2,0.5\right] ,%
\text{\ \ \ \ \ \ \ \ }0.3$ \\ \hline
$p_{3}$ & $\ \left[ 0.4,0.7\right] ,\text{\ \ \ \ \ \ \ }0.6$ & $\ \left[
0.3,0.6\right] ,\text{\ \ \ \ \ \ \ \ }0.5$ & $\left[ 0.5,0.8\right] ,\text{%
\ \ \ \ \ \ \ \ }0.6$ \\ \hline
$p_{4}$ & $\ \left[ 0.3,0.6\right] ,\text{\ \ \ \ \ \ \ }0.3$ & $\ \left[
0.7,1\right] ,\text{\ \ \ \ \ \ \ \ \ \ \ }0.85$ & $\ \left[ 0.6,0.9\right] ,%
\text{\ \ \ \ \ \ \ \ }0.75$ \\ \hline
\end{tabular}%
\end{equation*}%
an internal cubic soft set (ICSS) in $X$.
\end{example}

\subsubsection{External Cubic Soft sets}

\begin{definition}
A cubic soft set $\left( \widetilde{F,}\text{ }I\right) $ is said to be an
external cubic soft set if for all $e_{i}\in I\subseteq E$ ($E$ is set of
parameters), $F(e_{i})=\mathcal{A}_{i}$ is so $\lambda _{e_{i}\text{ }%
}(x)\notin (A_{e_{i}}^{-}(x),A_{e_{i}}^{+}(x))$ for all $e_{i}\in I$ and for
all $x\in X$.
\end{definition}

\begin{example}
Let $X=\left\{ p_{1},\text{ }p_{2},\text{ }p_{3},\text{ }p_{4}\right\} $ be
the set of cricket players under consideration and $E=\left\{ e_{1},\text{ }%
e_{2},\text{ }e_{3},\text{ }e_{4}\right\} $ be the set of parameters, where $%
e_{1,}$ $e_{2},$ $e_{3,\text{ }}e_{4}$ represent fitness, good current form,
good domestic cricket record and good moral character, respectively. Let $%
I=\left\{ e_{1},\text{ }e_{2},\text{ }e_{3}\right\} \subseteq E.$ Then, the
cubic soft set

$\left( \widetilde{F,}\text{ }I\right) =\left\{ \widetilde{F}(e_{i})=%
\mathcal{A}_{i\text{ }}=\left\{ <p,\text{ }A_{_{e_{i}}\text{ }}(p),\text{ }%
\lambda _{e_{i}\text{ }}(p)>:\text{ }p\in X\right\} \text{ }e_{i}\in I,\text{
}i=1,2,3.\right\} $ in $X$ is%
\begin{equation*}
\begin{tabular}{|l|l|l|l|}
\hline
$p$ & $%
\begin{array}{c}
\widetilde{F}(e_{1})=\mathcal{A}_{1}= \\ 
<A_{e_{1}}(p),\text{ \ \ \ \ \ }\lambda _{e_{1}}(p)>%
\end{array}%
$ & $%
\begin{array}{c}
\widetilde{F}(e_{2})=\mathcal{A}_{2}= \\ 
<A_{e_{2}}(p),\text{ \ \ \ \ \ }\lambda _{e_{2}}(p)>%
\end{array}%
$ & $%
\begin{array}{c}
\widetilde{F}(e_{3})=\mathcal{A}_{3}= \\ 
<A_{e_{3}}(p),\text{ \ \ \ \ \ }\lambda _{e_{3}}(p)>%
\end{array}%
$ \\ \hline
$p_{1}$ & $\ \left[ 0.1,0.5\right] ,\text{\ \ \ \ \ \ \ \ }0.1$ & $\ \left[
0.2,0.6\right] ,\text{\ \ \ \ \ \ \ \ }0.6$ & $\ \left[ 0.5,0.9\right] ,%
\text{\ \ \ \ \ \ \ }0.4$ \\ \hline
$p_{2}$ & $\ \left[ 0.2,0.6\right] ,\text{\ \ \ \ \ \ \ \ }0.6$ & $\ \left[
0.3,0.6\right] ,\text{\ \ \ \ \ \ \ \ }0.75$ & $\ \left[ 0.6,1\right] ,\text{%
\ \ \ \ \ \ \ \ \ \ \ \ }1$ \\ \hline
$p_{3}$ & $\ \left[ 0.3,0.7\right] ,\text{\ \ \ \ \ \ \ \ }0.2$ & $\ \left[
0.4,0.8\right] ,\text{\ \ \ \ \ \ \ \ }0.9$ & $\ \left[ 0.5,0.9\right] ,%
\text{\ \ \ \ \ \ \ }0.9$ \\ \hline
$p_{4}$ & $\ \left[ 0.5,0.9\right] ,\text{\ \ \ \ \ \ \ \ }0.4$ & $\ \left[
0.2,0.6\right] ,\text{\ \ \ \ \ \ \ \ }0.65$ & $\ \left[ 0.4,0.8\right] ,%
\text{\ \ \ \ \ \ \ }0.4$ \\ \hline
\end{tabular}%
\end{equation*}%
an external cubic soft set (ECSS) in $X$.
\end{example}

\begin{theorem}
Let $\left( \widetilde{F,}\text{ }I\right) =\left\{ \widetilde{F}%
(e_{i})=\left\{ <x,\text{ }A_{e_{i}}(x),\text{ }\lambda _{e_{i}}(x)>:\text{ }%
x\in X\right\} \text{ }e_{i}\in I\right\} $ be a cubic soft set in $X$ which
is not an ECSS. Then, there exists at least one $e_{i}\in I$ for which there
exists some $x\in X$ such that $\lambda _{e_{i}\text{ }}(x)\in
(A_{e_{i}}^{-}(x),A_{e_{i}}^{+}(x)).$
\end{theorem}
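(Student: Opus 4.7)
The statement is simply the logical negation of the defining condition of an external cubic soft set, so my plan is to argue by contraposition. I would set up the proof by assuming, toward a contradiction, that the conclusion fails; that is, for every $e_i \in I$ and every $x \in X$ one has $\lambda_{e_i}(x) \notin (A_{e_i}^{-}(x), A_{e_i}^{+}(x))$.

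Once this assumption is in place, the heart of the argument is to recognize that what we have just written is \emph{verbatim} the hypothesis in the definition of an ECSS given just above (Definition of external cubic soft set). Therefore $(\widetilde{F}, I)$ would satisfy that definition and hence be an ECSS, which directly contradicts the assumption that $(\widetilde{F}, I)$ is not an ECSS. So the assumption must fail, meaning there exist at least one $e_i \in I$ and at least one $x \in X$ with $\lambda_{e_i}(x) \in (A_{e_i}^{-}(x), A_{e_i}^{+}(x))$, exactly the required conclusion.

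There is essentially no obstacle here: the result is the De Morgan–style negation of a doubly quantified universal statement, and no property of cubic sets, fuzzy sets, or IVF sets beyond the ECSS definition itself is needed. The only point to be careful about is the logical form of the quantifiers — the ECSS condition is $\forall e_i\, \forall x$, so its negation is $\exists e_i\, \exists x$, which matches the claim exactly. Thus the proof will amount to a one-line appeal to the definition together with the quantifier negation, and I would present it essentially as a single short paragraph without any calculation.
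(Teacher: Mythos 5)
Your proposal is correct and coincides with the paper's own argument: both simply negate the universally quantified ECSS condition to obtain the existential conclusion, the paper doing so directly and you phrasing it as a contradiction. No further comment is needed.
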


\begin{proof}
By definition of an external cubic soft set (ECSS) we know that $\lambda
_{e_{i}\text{ }}(x)\notin (A_{e_{i}}^{-}(x),A_{e_{i}}^{+}(x))$ for all $x\in
X$ corresponding to each $e_{i}\in I$. But given that $\left( \widetilde{F,}%
\text{ }I\right) $ is not an ECSS so for at least one $e_{i}\in I$ there
exists some $x\in X$ such that $\lambda _{e_{i}}(x)\in
(A_{e_{i}}^{-}(x),A_{e_{i}}^{+}(x)).$ Hence the result.
\end{proof}

\begin{theorem}
Let $\left( \widetilde{F,}\text{ }I\right) =\left\{ \widetilde{F}%
(e_{i})=\left\{ <x,\text{ }A_{e_{i}}(x),\text{ }\lambda _{e_{i}}(x)>:\text{ }%
x\in X\right\} \text{ }e_{i}\in I\right\} $ be a cubic soft set in $X.$ If $%
\left( \widetilde{F,}\text{ }I\right) $ is both an ICSS and ECSS in $X$,
then for all $x\in X$ corresponding to each $e_{i}\in I,$ $\lambda
_{e_{i}}(x)\in (U(A_{e_{i}})\cup L(A_{e_{i}})),$ where $U(A_{e_{i}})=\left\{
A_{e_{i}}^{+}(x):x\in X\right\} $ and $L(A_{e_{i}})=\left\{
A_{e_{i}}^{-}(x):x\in X\right\} .$
\end{theorem}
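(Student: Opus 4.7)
The plan is to combine the two defining conditions directly. From the assumption that $(\widetilde{F},I)$ is an ICSS, we get the closed-interval containment $\lambda_{e_i}(x)\in [A_{e_i}^-(x),A_{e_i}^+(x)]$ for every $e_i\in I$ and every $x\in X$. From the assumption that $(\widetilde{F},I)$ is simultaneously an ECSS, we get $\lambda_{e_i}(x)\notin (A_{e_i}^-(x),A_{e_i}^+(x))$ for the same $e_i$ and $x$.

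Next I would intersect these two conditions pointwise. A real number lying in a closed interval but outside its open interior must coincide with one of the two endpoints. Hence for each fixed $e_i\in I$ and each $x\in X$, either
\[
\lambda_{e_i}(x)=A_{e_i}^-(x) \quad\text{or}\quad \lambda_{e_i}(x)=A_{e_i}^+(x).
\]

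Finally, I would interpret this endpoint dichotomy in terms of the sets $L(A_{e_i})$ and $U(A_{e_i})$. By their definitions, $A_{e_i}^-(x)\in L(A_{e_i})$ and $A_{e_i}^+(x)\in U(A_{e_i})$ for every $x\in X$, so in either case $\lambda_{e_i}(x)\in L(A_{e_i})\cup U(A_{e_i})$, which is the desired conclusion.

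There is essentially no obstacle: the whole argument is a one-line set-theoretic observation that $[a,b]\setminus (a,b)=\{a,b\}$, applied parameterwise. The only thing to be careful about is reading the ECSS condition with the open interval (as stated in the definition) rather than the closed one, since otherwise the ICSS and ECSS hypotheses would be mutually contradictory instead of forcing $\lambda_{e_i}(x)$ onto the boundary.
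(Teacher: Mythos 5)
Your proposal is correct and follows essentially the same route as the paper's own proof: combine the ICSS containment with the ECSS exclusion to force $\lambda_{e_{i}}(x)$ onto an endpoint, then observe that the endpoints lie in $L(A_{e_{i}})\cup U(A_{e_{i}})$. If anything, your version is slightly more careful, since the paper's proof writes the ICSS condition with an open interval (which would literally contradict the ECSS condition), whereas you correctly use the closed interval $[A_{e_{i}}^{-}(x),A_{e_{i}}^{+}(x)]$ that the definition of ICSS actually gives.
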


\begin{proof}
Let $\left( \widetilde{F,}\text{ }I\right) =\left\{ \widetilde{F}(e_{i})=%
\mathcal{A}_{i\text{ }}=\left\{ <x,\text{ }A_{e_{i}\text{ }}(x),\text{ }%
\lambda _{e_{i}\text{ }}(x)>:\text{ }x\in X\right\} \text{ }e_{i}\in
I\right\} $ is both an ICSS and ECSS, then by definition of ICSS
corresponding to each $e_{i}\in I$ and for all $x\in X$ we have $\lambda
_{e_{i}\text{ }}(x)\in (A_{e_{i}}^{-}(x),A_{e_{i}}^{+}(x)).$ By definition
of ECSS corresponding to each $e_{i}\in I$ and for all $x\in X$ we have\ $%
\lambda _{e_{i}}(x)\notin (A_{e_{i}}^{-}(x),A_{e_{i}}^{+}(x)).$ Since $%
\left( \widetilde{F,}\text{ }I\right) $ is both an ICSS and ECSS, so the
only possibility is that $\lambda _{e_{i}\text{ }}(x)=A_{e_{i}}^{-}(x)$ or $%
\lambda _{e_{i}\text{ }}(x)=A_{e_{i}}^{+}(x)$ for all $x\in X$ corresponding
to each $e_{i}\in I$. Hence $\lambda _{e_{i}\text{ }}(x)\in
(U(A_{e_{i}})\cup L(A_{e_{i}}))$ for all $e_{i}\in I$ and for all $x\in X$.
\end{proof}

\begin{definition}
Let $\left( \widetilde{F,}\text{ }I\right) =\left\{ \widetilde{F}(e_{i})=%
\mathcal{A}_{i\text{ }}=\left\{ <x,\text{ }A_{e_{i}}(x),\text{ }\lambda
_{e_{i}}(x)>:x\in X\right\} \text{ }e_{i}\in I\right\} $ and $\left( 
\widetilde{G,}\text{ }J\right) =\left\{ \widetilde{G}(e_{i})=\mathcal{B}%
_{i}=\left\{ <x,\text{ }B_{e_{i}}(x),\text{ }\mu _{e_{i}}(x)>:x\in X\right\} 
\text{ }e_{i}\in J\right\} $ be two cubic soft sets in $X,$ $I$ and $J$ are
any subsets of $E$ (set of parameters). Then we have the following

\begin{enumerate}
\item[1] $\left( \widetilde{F,}\text{ }I\right) =\left( \widetilde{G,}\text{ 
}J\right) $ if and only if\ the following\ conditions are satisfied
\end{enumerate}

(a) $I=J$ and

(b) $\widetilde{F}(e_{i})=\widetilde{G}(e_{i})$ for all $e_{i}\in I$ if and
only if $A_{e_{i}}(x)=B_{e_{i}}(x)$ and $\lambda _{e_{i}}(x)=\mu _{e_{i}}(x)$
for all $e_{i}\in I.$

\begin{enumerate}
\item[2] If $\left( \widetilde{F,}\text{ }I\right) $ and $\left( \widetilde{%
G,}\text{ }J\right) $ are two cubic soft sets then we denote and define\
P-order as$\left( \widetilde{F,}\text{ }I\right) \subseteq _{p}\left( 
\widetilde{G,}\text{ }J\right) $ if and only if\ the following\ conditions
are satisfied
\end{enumerate}

(a) $I\subseteq J$ and

(b) $\widetilde{F}(e_{i})\leq _{P}\widetilde{G}(e_{i})$ for all $e_{i}\in I$
if and only if $A_{e_{i}}(x)\preccurlyeq B_{e_{i}\text{ }}(x)$ and $\lambda
_{e_{i}}(x)\leq \mu _{e_{i}}(x)$ for all $x\in X$ corresponding to each $%
e_{i}\in I.$

\begin{enumerate}
\item[3] If $\left( \widetilde{F,}\text{ }I\right) $ and $\left( \widetilde{%
G,}\text{ }J\right) $ are two cubic soft sets then we denote and define\
R-order as $\left( \widetilde{F,}\text{ }I\right) \subseteq _{R}\left( 
\widetilde{G,}\text{ }J\right) $ if and only if the following\ conditions
are satisfied:
\end{enumerate}

(a) $I\subseteq J$ and

(b) $\widetilde{F}(e_{i})\leq _{R}\widetilde{G}(e_{i})$ for all $e_{i}\in I$
if and only if $A_{e_{i}}(x)\preccurlyeq B_{e_{i}}(x)$ and $\lambda
_{e_{i}}(x)\geq \mu _{e_{i}}(x)$ for all $x\in X$ corresponding to each $%
e_{i}\in I$ .
\end{definition}

\begin{example}
(P-order)

Let $\left( \widetilde{F,}\text{ }I\right) $ and $\left( \widetilde{G,}\text{
}J\right) $ be the cubic soft sets in $X$ defined as follows,

$\left( \widetilde{F,}\text{ }I\right) =\left\{ \widetilde{F}(e_{i})=%
\mathcal{A}_{i}=\left\{ <x,\text{ }A_{i}(p),\text{ }\lambda _{i}(p)>:\text{ }%
p\in X\right\} \text{ }e_{i}\in J,i=1,2,3\right\} $%
\begin{equation*}
\begin{tabular}{|l|l|l|l|}
\hline
$p$ & $%
\begin{array}{c}
\widetilde{F}(e_{1})=\mathcal{A}_{1}= \\ 
<A_{e_{1}}(p),\text{ \ \ \ \ \ }\lambda _{e_{1}}(p)>%
\end{array}%
$ & $%
\begin{array}{c}
\widetilde{F}(e_{2})=\mathcal{A}_{2}= \\ 
<A_{e_{2}}(p),\text{ \ \ \ \ \ }\lambda _{e_{2}}(p)>%
\end{array}%
$ & $%
\begin{array}{c}
\widetilde{F}(e_{3})=\mathcal{A}_{3}= \\ 
<A_{e_{3}}(p),\text{ \ \ \ \ \ }\lambda _{e_{3}}(p)>%
\end{array}%
$ \\ \hline
$p_{1}$ & $\ \left[ 0.2,0.5\right] ,\text{ \ \ \ \ \ }0.7$ & $\ \left[
0.1,0.4\right] ,\text{ \ \ \ \ \ \ \ }0.2$ & $\ \left[ 0.4,0.7\right] ,\text{
\ \ \ \ \ \ }0.5$ \\ \hline
$p_{2}$ & $\ \left[ 0.3,0.6\right] ,\text{ \ \ \ \ \ }0.5\text{\ }$ & $\ %
\left[ 0.6,0.9\right] ,\text{ \ \ \ \ \ \ \ }0.4$ & $\ \left[ 0.6,0.9\right]
,\text{ \ \ \ \ \ \ }0.7$ \\ \hline
$p_{3}$ & $\ \left[ 0.4,0.7\right] ,\text{ \ \ \ \ \ }0.5$ & $\ \left[ 0.7,1%
\right] ,\text{ \ \ \ \ \ \ \ \ \ }0.8$ & $\ \left[ 0.3,0.6\right] ,\text{ \
\ \ \ \ \ }0.5$ \\ \hline
$p_{4}$ & $\ \left[ 0.5,0.8\right] ,\text{ \ \ \ \ \ }0.6\text{\ }$ & $\ %
\left[ 0.3,0.6\right] ,\text{ \ \ \ \ \ \ \ }0.5$ & $\ \left[ 0.2,0.5\right]
,\text{ \ \ \ \ \ \ }0.7$ \\ \hline
\end{tabular}%
\end{equation*}%
and $\left( \widetilde{G,}\text{ }J\right) =\left\{ \widetilde{G}(e_{i})=%
\mathcal{B}_{i}=\left\{ <x,\text{ }B_{e_{i}}(p),\ \mu _{e_{i}}(p)>:\text{ }%
p\in X\right\} \text{ }e_{i}\in J,i=1,2,3\right\} $ is%
\begin{equation*}
\begin{tabular}{|l|l|l|l|}
\hline
$p$ & $%
\begin{array}{c}
\widetilde{G}(e_{1})=\mathcal{B}_{1}= \\ 
<B_{e_{1}}(p),\text{ \ \ \ \ }\ \mu _{e_{1}}(p)>%
\end{array}%
$ & $%
\begin{array}{c}
\widetilde{G}(e_{2})=\mathcal{B}_{2}= \\ 
<B_{e_{2}}(p),\text{ \ \ \ \ }\ \mu _{e_{2}}(p)>%
\end{array}%
$ & $%
\begin{array}{c}
\widetilde{G}(e_{3})=\mathcal{B}_{3}= \\ 
<B_{e_{3}}(p),\text{ \ \ \ \ }\ \mu _{e_{3}}(p)>%
\end{array}%
$ \\ \hline
$p_{1}$ & $\ \left[ 0.3,0.6\right] ,\text{ \ \ \ \ \ }0.8$ & $\ \left[
0.2,0.5\right] ,\text{ \ \ \ \ \ \ \ }0.3$ & $\ \left[ 0.5,0.8\right] ,\text{
\ \ \ \ \ \ }0.6$ \\ \hline
$p_{2}$ & $\ \left[ 0.4,0.7\right] ,\text{ \ \ \ \ \ }0.6$ & $\ \left[
0.6,0.9\right] ,\text{ \ \ \ \ \ \ \ }0.5$ & $\ \left[ 0.7,0.9\right] ,\text{
\ \ \ \ \ \ }0.8$ \\ \hline
$p_{3}$ & $\ \left[ 0.5,0.8\right] ,\text{ \ \ \ \ \ }0.5\text{\ }$ & $\ %
\left[ 0.7,1\right] ,\text{ \ \ \ \ \ \ \ \ \ }0.8$ & $\ \left[ 0.4,0.7%
\right] ,\text{ \ \ \ \ \ \ }0.6$ \\ \hline
$p_{4}$ & $\ \left[ 0.6,0.9\right] ,\text{ \ \ \ \ \ }0.7\text{\ }$ & $\ %
\left[ 0.4,0.7\right] ,\text{ \ \ \ \ \ \ \ }0.6$ & $\ \left[ 0.3,0.6\right]
,\text{ \ \ \ \ \ \ }0.7$ \\ \hline
\end{tabular}%
\end{equation*}%
Thus clearly we have that $\left( \widetilde{F,}\text{ }I\right) \subseteq
_{p}\left( \widetilde{G,}\text{ }J\right) .$
\end{example}

\begin{example}
(R-order)

Let $\left( \widetilde{F,}\text{ }I\right) $ and $\left( \widetilde{G,}\text{
}J\right) $ be the cubic soft sets in $X$ defined as follows,

$\left( \widetilde{F,}\text{ }I\right) =$%
\begin{equation*}
\begin{tabular}{|l|l|l|l|}
\hline
$p$ & $%
\begin{array}{c}
\widetilde{F}(e_{1})=\mathcal{A}_{1}= \\ 
<A_{e_{1}}(p),\text{ \ \ \ \ \ }\lambda _{e_{1}}(p)>%
\end{array}%
$ & $%
\begin{array}{c}
\widetilde{F}(e_{2})=\mathcal{A}_{2}= \\ 
<A_{e_{2}}(p),\text{ \ \ \ \ \ }\lambda _{e_{2}}(p)>%
\end{array}%
$ & $%
\begin{array}{c}
\widetilde{F}(e_{3})=\mathcal{A}_{3}= \\ 
<A_{e_{3}}(p),\text{ \ \ \ \ \ }\lambda _{e_{3}}(p)>%
\end{array}%
$ \\ \hline
$p_{1}$ & $\ \left[ 0.3,0.6\right] ,\text{ \ \ \ \ \ }0.7$ & $\ \left[
0.1,0.4\right] ,\text{ \ \ \ \ \ \ \ }0.6$ & $\ \left[ 0.2,0.5\right] ,\text{
\ \ \ \ \ \ }0.3$ \\ \hline
$p_{2}$ & $\ \left[ 0.4,0.7\right] ,\text{ \ \ \ \ \ }0.3$ & $\ \left[
0.2,0.5\right] ,\text{ \ \ \ \ \ \ \ }0.2$ & $\ \left[ 0.6,0.9\right] ,\text{
\ \ \ \ \ \ \ \ \ }1$ \\ \hline
$p_{3}$ & $\ \left[ 0.5,0.8\right] ,\text{ \ \ \ \ \ }0.4$ & $\ \left[
0.6,0.9\right] ,\text{ \ \ \ \ \ \ \ }0.2$ & $\ \left[ 0.5,0.8\right] ,\text{
\ \ \ \ \ \ \ }0.7$ \\ \hline
$p_{4}$ & $\ \left[ 0.6,0.9\right] ,\text{ \ \ \ \ \ }0.5$ & $\ \left[
0.4,0.7\right] ,\text{ \ \ \ \ \ \ \ }0.6$ & $\ \left[ 0.1,0.4\right] ,\text{
\ \ \ \ \ \ \ }0.9$ \\ \hline
\end{tabular}%
\end{equation*}%
and $\left( \widetilde{G,}\text{ }J\right) =\left\{ \widetilde{G}(e_{i})=%
\mathcal{B}_{i}=\left\{ <p,\text{ }B_{e_{i}}(p),\ \mu _{e_{i}}(p)>:\text{ }%
p\in X\right\} \text{ }e_{i}\in J,i=1,2,3\right\} $ is%
\begin{equation*}
\begin{tabular}{|l|l|l|l|}
\hline
$p$ & $%
\begin{array}{c}
\widetilde{G}(e_{1})=\mathcal{B}_{1}= \\ 
<B_{e_{1}}(p),\text{ \ \ \ \ }\ \mu _{e_{1}}(p)>%
\end{array}%
$ & $%
\begin{array}{c}
\widetilde{G}(e_{2})=\mathcal{B}_{2}= \\ 
<B_{e_{2}}(p),\text{ \ \ \ \ }\ \mu _{e_{2}}(p)>%
\end{array}%
$ & $%
\begin{array}{c}
\widetilde{G}(e_{3})=\mathcal{B}_{3}= \\ 
<B_{e_{3}}(p),\text{ \ \ \ \ }\ \mu _{e_{3}}(p)>%
\end{array}%
$ \\ \hline
$p_{1}$ & $\ \left[ 0.4,0.7\right] ,\text{ \ \ \ \ \ }0.6$ & $\ \left[
0.2,0.5\right] ,\text{ \ \ \ \ \ \ \ }0.5$ & $\ \left[ 0.3,0.6\right] ,\text{
\ \ \ \ \ \ }0.2$ \\ \hline
$p_{2}$ & $\ \left[ 0.5,0.8\right] ,\text{ \ \ \ \ \ }0.2$ & $\ \left[
0.3,0.6\right] ,\text{ \ \ \ \ \ \ \ }0.1$ & $\ \left[ 0.6,0.9\right] ,\text{
\ \ \ \ \ \ }0.6$ \\ \hline
$p_{3}$ & $\ \left[ 0.6,0.9\right] ,\text{ \ \ \ \ \ }0.3$ & $\ \left[
0.7,0.9\right] ,\text{ \ \ \ \ \ \ \ }0.2$ & $\ \left[ 0.6,0.9\right] ,\text{
\ \ \ \ \ \ }0.5$ \\ \hline
$p_{4}$ & $\ \left[ 0.7,0.9\right] ,\text{ \ \ \ \ \ }0.4$ & $\ \left[
0.5,0.8\right] ,\text{ \ \ \ \ \ \ \ }0.5$ & $\ \left[ 0.4,0.7\right] ,\text{
\ \ \ \ \ \ }0.9$ \\ \hline
\end{tabular}%
\end{equation*}%
Thus clearly we have that $\left( \widetilde{F,}\text{ }I\right) \subseteq
_{R}\left( \widetilde{G,}\text{ }J\right) .$
\end{example}

\subsubsection{P-union, P-intersection, R-union and R-intersection of any
two Cubic soft sets in $X$}

\begin{definition}
Let $\left( \widetilde{F,}\text{ }I\right) $ and $\left( \widetilde{G,}\text{
}J\right) $ be two cubic soft sets in $X,$ where $I$ and $J$ are any subsets
of parameter's set $E.$ Then, we define \emph{P-union} as $\left( \widetilde{%
F,}\text{ }I\right) \cup _{P}\left( \widetilde{G,}\text{ }J\right) =\left( 
\widetilde{H,}\text{ }C\right) ,$ where $C=I\cup J$ and%
\begin{equation*}
\widetilde{H}(e_{i})=\left\{ 
\begin{array}{c}
\widetilde{F}(e_{i})\text{ \ \ \ \ \ \ \ \ \ \ \ \ \ \ \ \ If }e_{i}\in I-J,
\\ 
\widetilde{G}(e_{i})\text{ \ \ \ \ \ \ \ \ \ \ \ \ \ \ \ \ If }e_{i}\in J-I,
\\ 
\widetilde{F}(e_{i})\vee _{P}\widetilde{G}(e_{i})\text{ \ \ \ If }e_{i}\in
I\cap J\text{,}%
\end{array}%
\right.
\end{equation*}%
where $\widetilde{F}(e_{i})\vee _{P}\widetilde{G}(e_{i})$ is defined as 
\begin{equation*}
\widetilde{F}(e_{i})\vee _{P}\widetilde{G}(e_{i})=\left\{ <x,\text{ }r\max
\{A_{e_{i}}(x),B_{e_{i}\text{ }}(x)\},(\text{ }\lambda _{e_{i}\text{ }}\vee
\mu _{_{e_{i}}\text{ }})(x)>:\text{ }x\in X\right\} ,e_{i}\in I\cap J
\end{equation*}%
\emph{P-intersection} as $\left( \widetilde{F,}\text{ }I\right) \cap
_{P}\left( \widetilde{G,}\text{ }J\right) =\left( \widetilde{H,}\text{ }%
C\right) ,$ where $C=I\cap J$, $H(e_{i})=\widetilde{F}(e_{i})\wedge _{P}%
\widetilde{G}(e_{i})$ and $e_{i}\in I\cap J.$ Here $\widetilde{F}%
(e_{i})\wedge _{P}\widetilde{G}(e_{i})$ is defined as $\widetilde{F}%
(e_{i})\wedge _{P}\widetilde{G}(e_{i})=\widetilde{H}(e_{i})=\left\{ <x,r\min
\{A_{e_{i}}(x),B_{e_{i}}(x)\},(\lambda _{e_{i}}\wedge \mu _{e_{i}})(x)>:x\in
X\text{, }e_{i}\in I\cap J\right\} $.

\emph{R-union} as $\left( \widetilde{F,}\text{ }I\right) \cup _{R}\left( 
\widetilde{G,}\text{ }J\right) =\left( \widetilde{H,}\text{ }C\right) ,$
where $C=I\cup J$ and%
\begin{equation*}
H(e_{i})=\left\{ 
\begin{array}{c}
\widetilde{F}(e_{i})\text{ \ \ \ \ \ \ \ \ \ \ \ \ \ \ \ \ If }e_{i}\in I-J
\\ 
\widetilde{G}(e_{i})\text{ \ \ \ \ \ \ \ \ \ \ \ \ \ \ \ \ If }e_{i}\in J-I
\\ 
\widetilde{F}(e_{i})\vee _{R}\widetilde{G}(e_{i})\text{ \ \ \ If }e_{i}\in
I\cap J\text{\ }%
\end{array}%
\right.
\end{equation*}
\end{definition}

\begin{example}
Here $\widetilde{F}(e_{i})\vee _{R}\widetilde{G}(e_{i})$ is defined as 
\begin{equation*}
\widetilde{F}(e_{i})\vee _{R}\widetilde{G}(e_{i})=\left\{ <x,\text{ }r\max
\{A_{e_{i}}(x),B_{e_{i}}(p)\},(\lambda _{e_{i}}\wedge \mu _{e_{i}})(x)>:%
\text{ }x\in X\right\} \text{, }e_{i}\in I\cap J
\end{equation*}%
\emph{R-intersection} as $\left( \widetilde{F,}\text{ }I\right) \cap
_{R}\left( \widetilde{G,}\text{ }J\right) =\left( \widetilde{H,}\text{ }%
C\right) ,$\ where $C=I\cap J,$ $H(e_{i})=\widetilde{F}(e_{i})\wedge _{R}%
\widetilde{G}(e_{i})$ and $e_{i}\in I\cap J$. Here $\ \widetilde{F}%
(e_{i})\wedge _{R}\widetilde{G}(e_{i})$ is defined as%
\begin{equation*}
\widetilde{F}(e_{i})\wedge _{R}\widetilde{G}(e_{i})=\widetilde{H}%
(e_{i})=\left\{ <x,\text{ }r\min \{A_{e_{i}}(x),B_{e_{i}}(x)\},(\lambda
_{e_{i}}\vee \mu _{e_{i}})(x)>:\text{ }x\in X\text{ }e_{i}\in I\cap J\right\}
\end{equation*}
\end{example}

\begin{enumerate}
\item 
\begin{example}
(P-union) Let $X=\left\{ p_{1},\text{ }p_{2},\text{ }p_{3},\text{ }%
p_{4}\right\} $ be initial universe, $I=\left\{ e_{1},\text{ }e_{2}\right\} $
and $J=\left\{ e_{1},\text{ }e_{2},\text{ }e_{3}\right\} $ are any subsets
of parameter's set $E=\left\{ e_{1},\text{ }e_{2},\text{ }e_{3}\right\} .$
Let $\left( \widetilde{F,}\text{ }I\right) $ be cubic soft set define as: 
\begin{equation*}
\begin{tabular}{|l|l|l|}
\hline
$p$ & $%
\begin{array}{c}
\widetilde{F}(e_{1})=\mathcal{A}_{1}= \\ 
<A_{e_{1}}(p),\text{ \ \ \ \ \ }\lambda _{e_{1}}(p)>%
\end{array}%
$ & $%
\begin{array}{c}
\widetilde{F}(e_{2})=\mathcal{A}_{2}= \\ 
<A_{e_{2}}(p),\text{ \ \ \ \ \ }\lambda _{e_{2}}(p)>%
\end{array}%
$ \\ \hline
$p_{1}$ & $\ \left[ 0.4,0.7\right] ,\text{ \ \ \ \ \ \ }0.3$ & $\ \left[
0.3,0.6\right] ,\text{ \ \ \ \ \ \ }0.7$ \\ \hline
$p_{2}$ & $\ \left[ 0.3,0.6\right] ,\text{ \ \ \ \ \ \ }0.8$ & $\ \left[
0.7,1\right] ,\text{ \ \ \ \ \ \ \ \ }0.8$ \\ \hline
$p_{3}$ & $\ \left[ 0.5,0.8\right] ,\text{ \ \ \ \ \ \ }0.7$ & $\ \left[
0.3,0.6\right] ,\text{ \ \ \ \ \ \ }0.6$ \\ \hline
$p_{4}$ & $\ \left[ 0.6,0.9\right] ,\text{ \ \ \ \ \ \ }0.5$ & $\ \left[
0.1,0.4\right] ,\text{ \ \ \ \ \ \ }0.9$ \\ \hline
\end{tabular}%
\end{equation*}%
Let $\left( \widetilde{G,}\text{ }J\right) =\widetilde{G}(e_{i})=\mathcal{B}%
_{i}=\left\{ <p,\text{ }B_{e_{i}}(p),\ \mu _{e_{i}}(p)>:\text{ }p\in X\text{%
, }e_{i}\in J,i=1,2,3\right\} $ be a cubic soft set over $X$%
\begin{equation*}
\begin{tabular}{|l|l|l|l|}
\hline
$p$ & $%
\begin{array}{c}
\widetilde{G}(e_{1})=\mathcal{B}_{1}= \\ 
<B_{e_{1}}(p),\text{ \ \ \ \ \ }\ \mu _{e_{1}}(p)>%
\end{array}%
$ & $%
\begin{array}{c}
\widetilde{G}(e_{2})=\mathcal{B}_{2}= \\ 
<B_{e_{2}}(p),\text{ \ \ \ \ }\ \mu _{e_{2}}(p)>%
\end{array}%
$ & $%
\begin{array}{c}
\widetilde{G}(e_{3})=\mathcal{B}_{3}= \\ 
<B_{e_{3}}(p),\text{ \ \ \ \ }\ \mu _{e_{3}}(p)>%
\end{array}%
$ \\ \hline
$p_{1}$ & $\ \left[ 0.3,0.6\right] ,\text{ \ \ \ \ \ }0.8$ & $\ \left[
0.2,0.5\right] ,\text{ \ \ \ \ \ \ \ }0.7$ & $\ \left[ 0.6,0.9\right] ,\text{
\ \ \ \ \ \ }0.7$ \\ \hline
$p_{2}$ & $\ \left[ 0.4,0.7\right] ,\text{ \ \ \ \ \ }0.6$ & $\ \left[
0.3,0.6\right] ,\text{ \ \ \ \ \ \ \ }0.8$ & $\ \left[ 0.3,0.6\right] ,\text{
\ \ \ \ \ \ }0.5$ \\ \hline
$p_{3}$ & $\ \left[ 0.5,0.8\right] ,\text{ \ \ \ \ \ }0.4$ & $\ \left[
0.4,0.7\right] ,\text{ \ \ \ \ \ \ \ }0.9$ & $\ \left[ 0.1,0.4\right] ,\text{
\ \ \ \ \ \ }0.9$ \\ \hline
$p_{4}$ & $\ \left[ 0.6,0.9\right] ,\text{ \ \ \ \ \ }0.7$ & $\ \left[
0.5,0.8\right] ,\text{ \ \ \ \ \ \ \ }0.6$ & $\ \left[ 0.4,0.7\right] ,\text{
\ \ \ \ \ \ }0.6$ \\ \hline
\end{tabular}%
\end{equation*}%
Then, $\left( \widetilde{F,}\text{ }I\right) $ $\cup _{P}\left( \widetilde{G,%
}\text{ }J\right) $ is a cubic soft set over $X$ and defined as:%
\begin{eqnarray*}
&&%
\begin{tabular}{|l|l|}
\hline
$p$ & $%
\begin{array}{c}
\widetilde{F}{\small (e}_{1}{\small )\vee }_{P}\widetilde{G}{\small (e}_{1}%
{\small )=} \\ 
<{\small r}\max \left\{ A_{e_{1}}(p){\small ,}B_{e_{1}}(p)\right\} ,{\small %
(\lambda }_{e_{1}}{\small \vee \mu }_{e_{1}}{\small )(p)>}%
\end{array}%
$ \\ \hline
$p_{1}$ & ${\small \ \ \ \ \ \ \ \ \ \ \ \ \ \ \ \ \ }\left[ {\small 0.4,0.7}%
\right] ,\text{ \ \ \ \ \ }{\small 0.8}$ \\ \hline
$p_{2}$ & ${\small \ \ \ \ \ \ \ \ \ \ \ \ \ \ \ \ \ }\left[ {\small 0.4,0.7}%
\right] ,\text{ \ \ \ \ \ }{\small 0.8}$ \\ \hline
$p_{3}$ & ${\small \ \ \ \ \ \ \ \ \ \ \ \ \ \ \ \ \ }\left[ {\small 0.5,0.8}%
\right] ,\text{ \ \ \ \ \ }{\small 0.7}$ \\ \hline
$p_{4}$ & ${\small \ \ \ \ \ \ \ \ \ \ \ \ \ \ \ \ \ }\left[ {\small 0.6,0.9}%
\right] ,\text{ \ \ \ \ \ }{\small 0.7}$ \\ \hline
\end{tabular}
\\
&&%
\begin{tabular}{|l|l|}
\hline
$p_{1}$ & $%
\begin{array}{c}
\widetilde{F}{\small (e}_{2}{\small )\vee }_{P}\widetilde{G}{\small (e}_{2}%
{\small )=} \\ 
<{\small r}\max \left\{ A_{e_{2}}(p){\small ,}B_{e_{2}}(p)\right\} ,{\small %
(\lambda }_{e_{2}}{\small \vee \mu }_{e_{2}}{\small )(p)>}%
\end{array}%
$ \\ \hline
$p_{2}$ & ${\small \ \ \ \ \ \ \ \ \ \ \ \ \ \ \ \ \ }\left[ {\small 0.3,0.6}%
\right] ,\text{ \ \ \ \ \ \ }{\small 0.7}$ \\ \hline
$p_{2}$ & ${\small \ \ \ \ \ \ \ \ \ \ \ \ \ \ \ \ \ }\left[ {\small 0.7,1}%
\right] ,\text{ \ \ \ \ \ \ \ \ }{\small 0.8}$ \\ \hline
$p_{3}$ & ${\small \ \ \ \ \ \ \ \ \ \ \ \ \ \ \ \ \ }\left[ {\small 0.4,0.7}%
\right] ,\text{ \ \ \ \ \ \ }{\small 0.9}$ \\ \hline
$p_{4}$ & ${\small \ \ \ \ \ \ \ \ \ \ \ \ \ \ \ \ \ }\left[ {\small 0.5,0.8}%
\right] ,\text{ \ \ \ \ \ \ }{\small 0.9}$ \\ \hline
\end{tabular}
\\
&&%
\begin{tabular}{|l|l|}
\hline
$p$ & $%
\begin{array}{c}
\widetilde{G}(e_{3})=\mathcal{B}_{3}= \\ 
<B_{e_{3}}(p){\small ,}\text{ \ \ \ \ \ \ \ }\lambda _{e_{3}}(p){\small >}%
\end{array}%
$ \\ \hline
$p_{1}$ & $\ \left[ {\small 0.6,0.9}\right] ,\text{ \ \ \ \ \ \ \ \ \ }%
{\small 0.7}$ \\ \hline
$p_{2}$ & $\ \left[ {\small 0.3,0.6}\right] ,\text{ \ \ \ \ \ \ \ \ \ }%
{\small 0.5}$ \\ \hline
$p_{3}$ & $\ \left[ {\small 0.1,0.4}\right] ,\text{ \ \ \ \ \ \ \ \ \ }%
{\small 0.9}$ \\ \hline
$p_{4}$ & $\ \left[ {\small 0.4,0.7}\right] ,\text{ \ \ \ \ \ \ \ \ \ }%
{\small 0.6}$ \\ \hline
\end{tabular}%
\end{eqnarray*}%
(P-intersection) Let $\left( \widetilde{F,}\text{ }I\right) $ be a cubic
soft set over $U$ and defined as: 
\begin{equation*}
\begin{tabular}{|l|l|l|l|}
\hline
$p$ & $%
\begin{array}{c}
\widetilde{F}(e_{1})=\mathcal{A}_{1}= \\ 
<A_{e_{1}}(p),\text{ \ \ \ \ \ }\lambda _{e_{1}}(p)>%
\end{array}%
$ & $%
\begin{array}{c}
\widetilde{F}(e_{2})=\mathcal{A}_{2}= \\ 
<A_{e_{2}}(p),\text{ \ \ \ \ \ }\lambda _{e_{2}}(p)>%
\end{array}%
$ & $%
\begin{array}{c}
\widetilde{F}(e_{3})=\mathcal{A}_{3}= \\ 
<A_{e_{3}}(p),\text{ \ \ \ \ \ }\lambda _{e_{3}}(p)>%
\end{array}%
$ \\ \hline
$p_{1}$ & $\ \left[ 0.3,0.6\right] ,\text{ \ \ \ \ \ }0.6$ & $\ \left[
0.1,0.4\right] ,\text{ \ \ \ \ \ \ \ }0.5$ & $\ \left[ 0.5,0.8\right] ,\text{
\ \ \ \ \ \ }0.7$ \\ \hline
$p_{2}$ & $\ \left[ 0.4,0.7\right] ,\text{ \ \ \ \ \ }0.7$ & $\ \left[ 0.7,1%
\right] ,\text{ \ \ \ \ \ \ \ \ \ }0.4$ & $\ \left[ 0.6,0.9\right] ,\text{ \
\ \ \ \ \ }0.5$ \\ \hline
$p_{3}$ & $\ \left[ 0.6,0.9\right] ,\text{ \ \ \ \ \ }0.8$ & $\ \left[
0.4,0.7\right] ,\text{ \ \ \ \ \ \ \ }0.6$ & $\ \left[ 0.4,0.7\right] ,\text{
\ \ \ \ \ \ }0.6$ \\ \hline
$p_{4}$ & $\ \left[ 0.2,0.5\right] ,\text{ \ \ \ \ \ }0.3$ & $\ \left[
0.5,0.8\right] \text{ \ \ \ \ \ \ \ \ }0.7$ & $\ \left[ 0.3,0.6\right] ,%
\text{ \ \ \ \ \ \ }0.8$ \\ \hline
\end{tabular}%
\end{equation*}%
Let $\left( \widetilde{G,}\text{ }J\right) =\widetilde{G}(e_{i})=\mathcal{B}%
_{i}=\left\{ <p,\text{ }B_{e_{i}}(p),\ \mu _{e_{i}}(p)>:\text{ }p\in X\text{%
, }e_{i}\in J,i=1,2,3\right\} $ be a cubic soft set over $X$%
\begin{equation*}
\begin{tabular}{|l|l|l|l|}
\hline
$p$ & $%
\begin{array}{c}
\widetilde{G}(e_{1})=\mathcal{B}_{1}= \\ 
<B_{e_{1}}(p),\text{ \ \ \ \ }\ \mu _{e_{1}}(p)>%
\end{array}%
$ & $%
\begin{array}{c}
\widetilde{G}(e_{2})=\mathcal{B}_{2}= \\ 
<B_{e_{2}}(p),\text{ \ \ \ \ }\ \mu _{e_{2}}(p)>%
\end{array}%
$ & $%
\begin{array}{c}
\widetilde{G}(e_{3})=\mathcal{B}_{3}= \\ 
<B_{e_{3}}(p),\text{ \ \ \ \ }\ \mu _{e_{3}}(p)>%
\end{array}%
$ \\ \hline
$p_{1}$ & $\ \left[ 0.4,0.5\right] ,\text{ \ \ \ \ \ }0.7$ & $\ \left[
0.4,0.6\right] ,\text{ \ \ \ \ \ \ \ }0.6$ & $\ \left[ 0.6,0.9\right] ,\text{
\ \ \ \ \ \ }0.1$ \\ \hline
$p_{2}$ & $\ \left[ 0.5,0.6\right] ,\text{ \ \ \ \ \ }0.5\text{\ }$ & $\ %
\left[ 0.5,0.7\right] ,\text{ \ \ \ \ \ \ \ }0.7$ & $\ \left[ 0.7,1\right] ,%
\text{ \ \ \ \ \ \ \ \ }0.4$ \\ \hline
$p_{3}$ & $\ \left[ 0.7,0.8\right] ,\text{ \ \ \ \ \ }0.7$ & $\ \left[ 0.7,1%
\right] ,\text{ \ \ \ \ \ \ \ \ \ }0.5$ & $\ \left[ 0.5,0.8\right] ,\text{ \
\ \ \ \ \ }0.3$ \\ \hline
$p_{4}$ & $\ \left[ 0.3,0.4\right] ,\text{ \ \ \ \ \ }0.6$ & $\ \left[
0.4,0.7\right] ,\text{ \ \ \ \ \ \ \ }0.8$ & $\ \left[ 0.4,0.7\right] ,\text{
\ \ \ \ \ \ }0.9$ \\ \hline
\end{tabular}%
\end{equation*}%
Then, P-intersection is denoted by $\left( \widetilde{F,}\text{ }I\right)
\cap _{P}\left( \widetilde{G,}\text{ }J\right) $ and defined as:%
\begin{eqnarray*}
&&%
\begin{tabular}{|l|l|}
\hline
$p$ & $%
\begin{array}{c}
\widetilde{F}{\small (e}_{1}{\small )\wedge }_{P}\widetilde{G}{\small (e}_{1}%
{\small )=} \\ 
<{\small r}\min \left\{ A_{e_{1}}(p),B_{e_{1}}(p)\right\} ,(\lambda
_{e_{1}}\wedge \mu _{e_{1}})(p)>%
\end{array}%
$ \\ \hline
$p_{1}$ & $\ \ \ \ \ \ \ \ \ \ \ \ \ \ \left[ 0.3,0.5\right] ,\text{ \ \ \ \
\ \ \ \ }0.6$ \\ \hline
$p_{2}$ & $\ \ \ \ \ \ \ \ \ \ \ \ \ \ \left[ 0.4,0.6\right] ,\text{ \ \ \ \
\ \ \ \ }0.5$ \\ \hline
$p_{3}$ & $\ \ \ \ \ \ \ \ \ \ \ \ \ \ \left[ 0.6,0.8\right] ,\text{ \ \ \ \
\ \ \ \ }0.7$ \\ \hline
$p_{4}$ & $\ \ \ \ \ \ \ \ \ \ \ \ \ \ \left[ 0.2,0.4\right] ,\text{ \ \ \ \
\ \ \ \ }0.3$ \\ \hline
\end{tabular}
\\
&&%
\begin{tabular}{|l|l|}
\hline
$p$ & $%
\begin{array}{c}
\widetilde{F}{\small (e}_{2}{\small )\wedge }_{P}\widetilde{G}{\small (e}_{2}%
{\small )=} \\ 
<{\small r}\min \left\{ A_{e_{2}}(p),B_{e_{2}}(p)\right\} ,(\lambda
_{e_{2}}\wedge \mu _{e_{2}})(p)>%
\end{array}%
$ \\ \hline
$p_{1}$ & $\ \ \ \ \ \ \ \ \ \ \ \ \ \ \left[ 0.1,0.4\right] ,\text{ \ \ \ \
\ \ \ \ }0.5$ \\ \hline
$p_{2}$ & $\ \ \ \ \ \ \ \ \ \ \ \ \ \ \left[ 0.5,0.7\right] ,\text{ \ \ \ \
\ \ \ \ }0.4$ \\ \hline
$p_{3}$ & $\ \ \ \ \ \ \ \ \ \ \ \ \ \ \left[ 0.4,0.7\right] ,\text{ \ \ \ \
\ \ \ \ }0.5$ \\ \hline
$p_{4}$ & $\ \ \ \ \ \ \ \ \ \ \ \ \ \ \left[ 0.4,0.7\right] ,\text{ \ \ \ \
\ \ \ \ }0.7$ \\ \hline
\end{tabular}
\\
&&%
\begin{tabular}{|l|l|}
\hline
$p$ & $%
\begin{array}{c}
\widetilde{F}{\small (e}_{3}{\small )\wedge }_{P}\widetilde{G}{\small (e}_{3}%
{\small )=} \\ 
<{\small r}\min \left\{ A_{e_{2}}(p),B_{e_{2}}(p)\right\} ,(\lambda
_{e_{2}}\wedge \mu _{e_{2}})(p)>%
\end{array}%
$ \\ \hline
$p_{1}$ & $\ \ \ \ \ \ \ \ \ \ \ \ \ \left[ 0.5,0.8\right] ,\text{ \ \ \ \ \
\ \ \ \ }0.1$ \\ \hline
$p_{2}$ & $\ \ \ \ \ \ \ \ \ \ \ \ \ \left[ 0.6,0.9\right] ,\text{ \ \ \ \ \
\ \ \ \ }0.4$ \\ \hline
$p_{3}$ & $\ \ \ \ \ \ \ \ \ \ \ \ \ \left[ 0.4,0.7\right] ,\text{ \ \ \ \ \
\ \ \ \ }0.3$ \\ \hline
$p_{4}$ & $\ \ \ \ \ \ \ \ \ \ \ \ \ \left[ 0.3,0.6\right] ,\text{ \ \ \ \ \
\ \ \ \ }0.8$ \\ \hline
\end{tabular}%
\end{eqnarray*}%
(R-union) Let $\left( \widetilde{F,}\text{ }I\right) $ be a cubic soft set
over $X$ and defined as: 
\begin{equation*}
\begin{tabular}{|l|l|l|l|}
\hline
$p$ & $%
\begin{array}{c}
\widetilde{F}(e_{1})=\mathcal{A}_{1}= \\ 
<A_{e_{1}}(p),\text{ \ \ \ \ \ }\lambda _{e_{1}}(p)>%
\end{array}%
$ & $%
\begin{array}{c}
\widetilde{F}(e_{2})=\mathcal{A}_{2}= \\ 
<A_{e_{2}}(p),\text{ \ \ \ \ \ }\lambda _{e_{2}}(p)>%
\end{array}%
$ & $%
\begin{array}{c}
\widetilde{F}(e_{3})=\mathcal{A}_{3}= \\ 
<A_{e_{3}}(p),\text{ \ \ \ \ \ }\lambda _{e_{3}}(p)>%
\end{array}%
$ \\ \hline
$p_{1}$ & $\ \left[ 0.3,0.5\right] ,\text{ \ \ \ \ \ }0.6$ & $\ \left[
0.1,0.4\right] ,\text{ \ \ \ \ \ \ \ }0.5$ & $\ \left[ 0.5,0.8\right] ,\text{
\ \ \ \ \ \ }0.7$ \\ \hline
$p_{2}$ & $\ \left[ 0.4,0.6\right] ,\text{ \ \ \ \ \ }0.5$ & $\ \left[
0.5,0.7\right] ,\text{ \ \ \ \ \ \ \ }0.8$ & $\ \left[ 0.6,0.9\right] ,\text{
\ \ \ \ \ \ }0.5$ \\ \hline
$p_{3}$ & $\ \left[ 0.6,0.8\right] ,\text{ \ \ \ \ \ }0.7$ & $\ \left[
0.4,0.7\right] ,\text{ \ \ \ \ \ \ \ }0.5$ & $\ \left[ 0.4,0.7\right] ,\text{
\ \ \ \ \ \ }0.6$ \\ \hline
$p_{4}$ & $\ \left[ 0.3,0.5\right] ,\text{ \ \ \ \ \ }0.6$ & $\ \left[
0.5,0.8\right] ,\text{ \ \ \ \ \ \ \ }0.7$ & $\ \left[ 0.5,0.8\right] ,\text{
\ \ \ \ \ \ }0.7$ \\ \hline
\end{tabular}%
\end{equation*}%
Let$\left( \widetilde{G,}\text{ }J\right) =\widetilde{G}(e_{i})=\mathcal{B}%
_{i}=\left\{ <p,B_{e_{i}}(p),\mu _{i}(p)>:p\in X,e_{i}\in J,i=1,2,3\right\} $
be a cubic soft and defined as:%
\begin{equation*}
\begin{tabular}{|l|l|l|l|}
\hline
$p$ & $%
\begin{array}{c}
\widetilde{G}(e_{1})=\mathcal{B}_{1}= \\ 
<B_{e_{1}}(p),\text{ \ \ \ \ }\ \mu _{e_{1}}(p)>%
\end{array}%
$ & $%
\begin{array}{c}
\widetilde{G}(e_{2})=\mathcal{B}_{2}= \\ 
<B_{e_{2}}(p),\text{ \ \ \ \ }\ \mu _{e_{2}}(p)>%
\end{array}%
$ & $%
\begin{array}{c}
\widetilde{G}(e_{3})=\mathcal{B}_{3}= \\ 
<B_{e_{3}}(p),\text{ \ \ \ \ }\ \mu _{e_{3}}(p)>%
\end{array}%
$ \\ \hline
$p_{1}$ & $\ \left[ 0.4,0.5\right] ,\text{ \ \ \ \ \ }0.7$ & $\ \left[
0.4,0.6\right] ,\text{ \ \ \ \ \ \ \ }0.6$ & $\ \left[ 0.6,0.9\right] ,\text{
\ \ \ \ \ \ }0.1$ \\ \hline
$p_{2}$ & $\ \left[ 0.5,0.6\right] ,\text{ \ \ \ \ \ }0.5$ & $\ \left[
0.5,0.7\right] ,\text{ \ \ \ \ \ \ \ }0.7$ & $\ \left[ 0.7,1\right] ,\text{
\ \ \ \ \ \ \ \ }0.4$ \\ \hline
$p_{3}$ & $\ \left[ 0.7,0.8\right] ,\text{ \ \ \ \ \ }0.7$ & $\ \left[ 0.7,1%
\right] ,\text{ \ \ \ \ \ \ \ \ \ }0.5$ & $\ \left[ 0.5,0.8\right] ,\text{ \
\ \ \ \ \ }0.3$ \\ \hline
$p_{4}$ & $\ \left[ 0.3,0.4\right] ,\text{ \ \ \ \ \ }0.6$ & $\ \left[
0.4,0.7\right] ,\text{ \ \ \ \ \ \ \ }0.8$ & $\ \left[ 0.4,0.7\right] ,\text{
\ \ \ \ \ \ }0.9.$ \\ \hline
\end{tabular}%
\end{equation*}%
Then, their R-union is denoted by $\left( \widetilde{F,}\text{ }I\right)
\cup _{R}\left( \widetilde{G,}\text{ }J\right) $ and defined as%
\begin{eqnarray*}
&&%
\begin{tabular}{|l|l|}
\hline
$p$ & $%
\begin{array}{c}
\widetilde{F}(e_{1})\vee _{R}\widetilde{G}(e_{1})= \\ 
<r\max \left\{ A_{e_{1}}(p),B_{e_{1}}(p)\right\} ,(\lambda _{e_{1}}\wedge
\mu _{e_{1}})(p)>%
\end{array}%
$ \\ \hline
$p_{1}$ & $\ \ \ \ \ \ \ \ \ \ \ \ \ \ \left[ 0.4,0.5\right] ,\text{ \ \ \ \
\ \ \ \ \ }0.6$ \\ \hline
$p_{2}$ & $\ \ \ \ \ \ \ \ \ \ \ \ \ \ \left[ 0.5,0.6\right] ,\text{ \ \ \ \
\ \ \ \ \ }0.5$ \\ \hline
$p_{3}$ & $\ \ \ \ \ \ \ \ \ \ \ \ \ \ \left[ 0.7,0.8\right] ,\text{ \ \ \ \
\ \ \ \ \ }0.7$ \\ \hline
$p_{4}$ & $\ \ \ \ \ \ \ \ \ \ \ \ \ \ \left[ 0.3,0.5\right] ,\text{ \ \ \ \
\ \ \ \ \ }0.6$ \\ \hline
\end{tabular}
\\
&&%
\begin{tabular}{ll}
$p$ & $%
\begin{array}{c}
\widetilde{F}(e_{2})\vee _{R}\widetilde{G}(e_{2})= \\ 
<r\max \left\{ A_{e_{2}}(p),B_{e_{2}}(p)\right\} ,(\lambda _{e_{2}}\wedge
\mu _{e_{2}})(p)>%
\end{array}%
$ \\ 
$p_{1}$ & $\ \ \ \ \ \ \ \ \ \ \ \ \ \ \ \left[ 0.4,0.6\right] ,\text{ \ \ \
\ \ \ \ \ }0.5$ \\ 
$p_{2}$ & $\ \ \ \ \ \ \ \ \ \ \ \ \ \ \ \left[ 0.5,0.7\right] ,\text{ \ \ \
\ \ \ \ \ }0.7$ \\ 
$p_{3}$ & $\ \ \ \ \ \ \ \ \ \ \ \ \ \ \ \left[ 0.7,1\right] ,\text{ \ \ \ \
\ \ \ \ \ \ }0.5$ \\ 
$p_{4}$ & $\ \ \ \ \ \ \ \ \ \ \ \ \ \ \ \left[ 0.5,0.8\right] ,\text{ \ \ \
\ \ \ \ \ }0.7$%
\end{tabular}
\\
&&%
\begin{tabular}{|l|l|}
\hline
$p$ & $%
\begin{array}{c}
\widetilde{F}(e_{3})\vee _{R}\widetilde{G}(e_{3})= \\ 
<r\max \left\{ A_{e_{3}}(p),B_{e_{3}}(p)\right\} ,(\lambda _{e_{3}}\wedge
\mu _{e_{3}})(p)>%
\end{array}%
$ \\ \hline
$p_{1}$ & $\ \ \ \ \ \ \ \ \ \ \ \ \ \ \ \left[ 0.6,0.9\right] ,\text{ \ \ \
\ \ \ \ }0.1$ \\ \hline
$p_{2}$ & $\ \ \ \ \ \ \ \ \ \ \ \ \ \ \ \left[ 0.7,1\right] ,\text{ \ \ \ \
\ \ \ \ \ }0.4$ \\ \hline
$p_{3}$ & $\ \ \ \ \ \ \ \ \ \ \ \ \ \ \ \left[ 0.5,0.8\right] ,\text{ \ \ \
\ \ \ \ }0.3$ \\ \hline
$p_{4}$ & $\ \ \ \ \ \ \ \ \ \ \ \ \ \ \ \left[ 0.5,0.8\right] ,\text{ \ \ \
\ \ \ \ }0.7$ \\ \hline
\end{tabular}%
\end{eqnarray*}%
(R-intersection) Let $\left( \widetilde{F,}\text{ }I\right) $ be a cubic
soft set over $X$ and defined as: 
\begin{equation*}
\begin{tabular}{|l|l|l|l|}
\hline
$p$ & $%
\begin{array}{c}
\widetilde{F}(e_{1})=\mathcal{A}_{1}= \\ 
<A_{e_{1}}(p),\text{ \ \ \ \ \ }\lambda _{e_{1}}(p)>%
\end{array}%
$ & $%
\begin{array}{c}
\widetilde{F}(e_{2})=\mathcal{A}_{2}= \\ 
<A_{e_{2}}(p),\text{ \ \ \ \ \ }\lambda _{e_{2}}(p)>%
\end{array}%
$ & $%
\begin{array}{c}
\widetilde{F}(e_{3})=\mathcal{A}_{3}= \\ 
<A_{e_{3}}(p),\text{ \ \ \ \ \ }\lambda _{e_{3}}(p)>%
\end{array}%
$ \\ \hline
$p_{1}$ & $\ \left[ 0.3,0.6\right] ,\text{ \ \ \ \ \ }0.2$ & $\ \left[ 0.7,1%
\right] ,\text{ \ \ \ \ \ \ \ \ \ }0.6$ & $\ \left[ 0.4,0.7\right] ,\text{ \
\ \ \ \ \ }0.9$ \\ \hline
$p_{2}$ & $\ \left[ 0.4,0.7\right] ,\text{ \ \ \ \ \ }0.4$ & $\ \left[
0.5,0.8\right] ,\text{ \ \ \ \ \ \ }0.7$ & $\ \left[ 0.4,0.7\right] ,\text{
\ \ \ \ \ \ }0.6$ \\ \hline
$p_{3}$ & $\ \left[ 0.5,0.8\right] ,\text{ \ \ \ \ \ }0.7$ & $\ \left[
0.4,0.7\right] ,\text{ \ \ \ \ \ \ }0.8$ & $\ \left[ 0.5,0.8\right] ,\text{
\ \ \ \ \ \ }0.65$ \\ \hline
$p_{4}$ & $\ \left[ 0.6,0.9\right] ,\text{ \ \ \ \ \ }0.5$ & $\ \left[
0.3,0.6\right] ,\text{ \ \ \ \ \ \ }0.9$ & $\ \left[ 0.3,0.6\right] ,\text{
\ \ \ \ \ \ }0.75$ \\ \hline
\end{tabular}%
\end{equation*}%
Let $\left( \widetilde{G,}\text{ }J\right) =\left\{ \widetilde{G}(e_{i})=%
\mathcal{B}_{i\text{ }}=\left\{ <p,\text{ }B_{i\text{ }}(p),\text{ }\mu _{i%
\text{ }}(p)>:\text{ }p\in X\right\} e_{i}\in J,i=1,2,3\right\} $ be a cubic
sot set defined as: 
\begin{equation*}
\begin{tabular}{|l|l|l|l|}
\hline
$p$ & $%
\begin{array}{c}
\widetilde{G}(e_{1})=\mathcal{B}_{1}= \\ 
<B_{e_{1}}(p),\text{ \ \ \ \ }\ \mu _{e_{1}}(p)>%
\end{array}%
$ & $%
\begin{array}{c}
\widetilde{G}(e_{2})=\mathcal{B}_{2}= \\ 
<B_{e_{2}}(p),\text{ \ \ \ \ }\ \mu _{e_{2}}(p)>%
\end{array}%
$ & $%
\begin{array}{c}
\widetilde{G}(e_{3})=\mathcal{B}_{3}= \\ 
<B_{e_{3}}(p),\text{ \ \ \ \ }\ \mu _{e_{3}}(p)>%
\end{array}%
$ \\ \hline
$p_{1}$ & $\ \left[ 0.4,0.7\right] ,\text{ \ \ \ \ \ }0.3$ & $\ \left[
0.6,0.9\right] ,\text{ \ \ \ \ \ \ \ }0.3$ & $\ \left[ 0.5,0.8\right] ,\text{
\ \ \ \ \ \ }0.8$ \\ \hline
$p_{2}$ & $\ \left[ 0.3,0.6\right] ,\text{ \ \ \ \ \ }0.6$ & $\ \left[
0.4,0.7\right] ,\text{ \ \ \ \ \ \ \ }0.2$ & $\ \left[ 0.5,0.8\right] ,\text{
\ \ \ \ \ \ }0.6$ \\ \hline
$p_{3}$ & $\ \left[ 0.6,0.9\right] ,\text{ \ \ \ \ \ }0.7$ & $\ \left[
0.3,0.6\right] ,\text{ \ \ \ \ \ \ \ }0.4$ & $\ \left[ 0.3,0.6\right] ,\text{
\ \ \ \ \ \ }0.5$ \\ \hline
$p_{4}$ & $\ \left[ 0.5,0.8\right] ,\text{ \ \ \ \ \ }0.9$ & $\ \left[
0.4,0.7\right] ,\text{ \ \ \ \ \ \ \ }0.5$ & $\ \left[ 0.4,0.7\right] ,\text{
\ \ \ \ \ \ }0.7.$ \\ \hline
\end{tabular}%
\end{equation*}%
Then, the R-intersection is is denoted by $\left( \widetilde{F,}\text{ }%
I\right) \cap _{R}\left( \widetilde{G,}\text{ }J\right) $ and defined as
below:%
\begin{eqnarray*}
&&%
\begin{tabular}{|l|l|}
\hline
$p$ & $%
\begin{array}{c}
\widetilde{F}(e_{1})\wedge _{R}\widetilde{G}(e_{1})= \\ 
<r\min \left\{ A_{e_{1}}(p),B_{e_{1}}(p)\right\} ,(\lambda _{e_{1}}\vee \mu
_{e_{1}})(p)>%
\end{array}%
$ \\ \hline
$p_{1}$ & $\ \ \ \ \ \ \ \ \ \ \ \ \ \ \left[ 0.3,0.6\right] ,\text{ \ \ \ \
\ \ \ \ }0.3$ \\ \hline
$p_{2}$ & $\ \ \ \ \ \ \ \ \ \ \ \ \ \ \left[ 0.3,0.6\right] ,\text{ \ \ \ \
\ \ \ \ }0.6$ \\ \hline
$p_{3}$ & $\ \ \ \ \ \ \ \ \ \ \ \ \ \ \left[ 0.5,0.8\right] ,\text{ \ \ \ \
\ \ \ \ }0.7$ \\ \hline
$p_{4}$ & $\ \ \ \ \ \ \ \ \ \ \ \ \ \ \left[ 0.5,0.8\right] ,\text{ \ \ \ \
\ \ \ \ }0.9$ \\ \hline
\end{tabular}
\\
&&%
\begin{tabular}{ll}
$p$ & $%
\begin{array}{c}
\widetilde{F}(e_{2})\wedge _{R}\widetilde{G}(e_{2})= \\ 
<r\min \left\{ A_{e_{2}}(p),B_{e_{2}}(p)\right\} ,(\lambda _{e_{2}}\vee \mu
_{e_{2}})(p)>%
\end{array}%
$ \\ 
$p_{1}$ & $\ \ \ \ \ \ \ \ \ \ \ \ \ \ \ \left[ 0.6,0.9\right] ,\text{ \ \ \
\ \ \ }0.6$ \\ 
$p_{2}$ & $\ \ \ \ \ \ \ \ \ \ \ \ \ \ \ \left[ 0.4,0.7\right] ,\text{ \ \ \
\ \ \ }0.7$ \\ 
$p_{3}$ & \ $\ \ \ \ \ \ \ \ \ \ \ \ \ \ \left[ 0.3,0.6\right] ,\text{ \ \ \
\ \ \ }0.8$ \\ 
$p_{4}$ & $\ \ \ \ \ \ \ \ \ \ \ \ \ \ \ \left[ 0.3,0.6\right] ,\text{ \ \ \
\ \ \ }0.9$%
\end{tabular}
\\
&&%
\begin{tabular}{|l|l|}
\hline
$p$ & $%
\begin{array}{c}
\widetilde{F}(e_{3})\wedge _{R}\widetilde{G}(e_{3})= \\ 
<r\min A_{e_{3}}(p),B_{e_{3}}(p),(\lambda _{e_{3}}\vee \mu _{e_{3}})(p)>%
\end{array}%
$ \\ \hline
$p_{1}$ & $\ \ \ \ \ \ \ \ \ \ \ \ \ \ \left[ 0.4,0.7\right] ,\text{ \ \ \ \ 
}0.9$ \\ \hline
$p_{2}$ & $\ \ \ \ \ \ \ \ \ \ \ \ \ \ \left[ 0.4,0.7\right] ,\text{ \ \ \ \ 
}0.6$ \\ \hline
$p_{3}$ & $\ \ \ \ \ \ \ \ \ \ \ \ \ \ \left[ 0.3,0.6\right] ,\text{ \ \ \ \ 
}0.65$ \\ \hline
$p_{4}$ & $\ \ \ \ \ \ \ \ \ \ \ \ \ \ \left[ 0.3,0.6\right] ,\text{ \ \ \ \ 
}0.75$ \\ \hline
\end{tabular}%
\end{eqnarray*}
\end{example}
\end{enumerate}

\subsubsection{P-OR, R-OR, P-AND and R-AND of cubic soft sets}

\begin{definition}
Let $\left( \widetilde{F,}\text{ }I\right) =\left\{ \widetilde{F}(e_{i})=%
\mathcal{A}_{i}=\left\{ <x,\text{ }A_{e_{i}}(x),\text{ }\lambda _{e_{i}}(x)>:%
\text{ }x\in X\right\} \text{ }e_{i}\in I\right\} $ and $\left( \widetilde{G,%
}\text{ }J\right) =\left\{ \widetilde{G}(e_{i})=\mathcal{B}_{i\text{ }%
}=\left\{ <x,\text{ }B_{i\text{ }}(x),\text{ }\mu _{e_{i}\text{ }}(x)>:\text{
}x\in X\right\} \text{ }e_{i}\in J\right\} $ be cubic soft sets in $X.$Then,

\begin{enumerate}
\item P-OR is denoted by $\left( \widetilde{F,}\text{ }I\right) \vee
_{P}\left( \widetilde{G,}\text{ }J\right) $ and defined as $\left( 
\widetilde{F,}\text{ }I\right) \vee _{P}\left( \widetilde{G,}\text{ }%
J\right) =\left( \widetilde{H,}\text{ }I\times J\right) $ where $\widetilde{H%
}(\mathcal{\alpha }_{i},\mathcal{\beta }_{i})=\widetilde{F}(\mathcal{\alpha }%
_{i})\cup _{P}\widetilde{G}(\mathcal{\beta }_{i})$ for all $(\mathcal{\alpha 
}_{i},\mathcal{\beta }_{i})\in I\times J.$

\item R-OR is denoted by $\left( \widetilde{F,}\text{ }I\right) \vee
_{R}\left( \widetilde{G,}\text{ }J\right) $ and defined as $\left( 
\widetilde{F,}\text{ }I\right) \vee _{R}\left( \widetilde{G,}\text{ }%
J\right) =\left( \widetilde{H,}\text{ }I\times J\right) $ where $\widetilde{H%
}(\mathcal{\alpha }_{i}$,$\mathcal{\beta }_{i})=\widetilde{F}(\mathcal{%
\alpha }_{i})\cup _{R}\widetilde{G}(\mathcal{\beta }_{i})$ for all $(%
\mathcal{\alpha }_{i},\mathcal{\beta }_{i})\in I\times J.$

\item P-AND is denoted by $\left( \widetilde{F,}\text{ }I\right) \wedge
_{P}\left( \widetilde{G,}\text{ }J\right) $ and defined as $\left( 
\widetilde{F,}\text{ }I\right) \wedge _{P}\left( \widetilde{G,}\text{ }%
J\right) =\left( \widetilde{H,}\text{ }I\times J\right) $ where $\widetilde{H%
}(\mathcal{\alpha }_{i},\mathcal{\beta }_{i})=\widetilde{F}(\mathcal{\alpha }%
_{i})\cap _{P}\widetilde{G}(\mathcal{\beta }_{i})$ for all $(\mathcal{\alpha 
}_{i},\mathcal{\beta }_{i})\in I\times J.$

\item R-AND is denoted by $\left( \widetilde{F,}\text{ }I\right) \wedge
_{R}\left( \widetilde{G,}\text{ }J\right) $ and defined as $\left( 
\widetilde{F,}\text{ }I\right) \wedge _{R}\left( \widetilde{G,}\text{ }%
J\right) =\left( \widetilde{H,}\text{ }I\times J\right) $ where $\widetilde{H%
}(\mathcal{\alpha }_{i},\mathcal{\beta }_{i})=\widetilde{F}(\mathcal{\alpha }%
_{i})\cap _{R}\widetilde{G}(\mathcal{\beta }_{i})$ for all $(\mathcal{\alpha 
}_{i},\mathcal{\beta }_{i})\in I\times J.$
\end{enumerate}
\end{definition}

\begin{example}
Let $X=\left\{ p_{1},\text{ }p_{2},\text{ }p_{3},\text{ }p_{4}\right\} $ be
initial universe, $I=\left\{ e_{1},\text{ }e_{2}\right\} $ and $J=\left\{
e_{1},\text{ }e_{2},\text{ }e_{3}\right\} $ are any subsets of parameter's
set $E=\left\{ e_{1},\text{ }e_{2},\text{ }e_{3}\right\} .$ Let $\left( 
\widetilde{F,}\text{ }I\right) $ and $\left( \widetilde{G,}\text{ }J\right) $
be two cubic soft over $X$ and defined as below, respectively.%
\begin{equation*}
\begin{tabular}{|l|l|l|}
\hline
$p$ & $%
\begin{array}{c}
\widetilde{F}(e_{1})=\mathcal{A}_{1}= \\ 
<A_{e_{1}}(p),\text{ }\lambda _{e_{1}}(p)>%
\end{array}%
$ & $%
\begin{array}{c}
\widetilde{F}(e_{2})=\mathcal{A}_{2}= \\ 
<A_{e_{2}}(p),\lambda _{e_{2}}(p)>%
\end{array}%
$ \\ \hline
$p_{1}$ & $\ \left[ 0.3,0.6\right] ,\text{ \ \ \ \ \ }0.2\text{\ }$ & $\left[
0.7,1\right] ,\text{ \ \ \ \ \ \ \ \ \ }0.6$ \\ \hline
$p_{2}$ & $\ \left[ 0.4,0.7\right] ,\text{ \ \ \ \ \ }0.4\text{\ }$ & $\left[
0.5,0.8\right] ,\text{ \ \ \ \ \ \ }0.7$ \\ \hline
$p_{3}$ & $\ \left[ 0.5,0.8\right] ,\text{ \ \ \ \ \ }0.7$ & $\left[ 0.4,0.7%
\right] ,\text{ \ \ \ \ \ \ }0.8$ \\ \hline
$p_{4}$ & $\ \left[ 0.6,0.9\right] ,\text{ \ \ \ \ \ }0.5$ & $\left[ 0.3,0.6%
\right] ,\text{ \ \ \ \ \ \ }0.9$ \\ \hline
\end{tabular}%
\end{equation*}%
and 
\begin{equation*}
\begin{tabular}{|l|l|l|}
\hline
$p$ & $%
\begin{array}{c}
\widetilde{G}(e_{1})=\mathcal{B}_{1}= \\ 
<B_{e_{1}}(p),\text{ \ \ \ \ }\ \mu _{e_{1}}(p)>%
\end{array}%
$ & $%
\begin{array}{c}
\widetilde{G}(e_{2})=\mathcal{B}_{2}= \\ 
<B_{e_{2}}(p),\text{ \ \ \ \ }\ \mu _{e_{2}}(p)>%
\end{array}%
$ \\ \hline
$p_{1}$ & $\ \left[ 0.4,0.7\right] ,\text{ \ \ \ \ \ }0.3\text{\ \ }$ & $\ %
\left[ 0.6,0.9\right] ,\text{ \ \ \ \ \ \ \ }0.3\text{\ }$ \\ \hline
$p_{2}$ & $\ \left[ 0.3,0.6\right] ,\text{ \ \ \ \ \ }0.6\text{\ \ }$ & $\ %
\left[ 0.4,0.7\right] ,\text{ \ \ \ \ \ \ \ }0.2\text{\ }$ \\ \hline
$p_{3}$ & $\ \left[ 0.6,0.9\right] ,\text{ \ \ \ \ \ }0.7\text{\ \ }$ & $\ %
\left[ 0.3,0.6\right] ,\text{ \ \ \ \ \ \ \ }0.4\text{\ }$ \\ \hline
$p_{4}$ & $\ \left[ 0.5,0.8\right] ,\text{ \ \ \ \ \ }0.9\text{\ }$ & $\ %
\left[ 0.4,0.7\right] ,\text{ \ \ \ \ \ \ \ }0.5$. \\ \hline
\end{tabular}%
\end{equation*}%
Then P-OR is denoted as $\left( \widetilde{H,}\text{ }I\times J\right)
=\left( \widetilde{F,}\text{ }I\right) \vee _{P}\left( \widetilde{G,}\text{ }%
J\right) $, where $I\times J=\{(e_{1},e_{1}),$ $(e_{1},e_{2}),$ $%
(e_{2},e_{1}),$ $(e_{2},e_{2})\}$, is defined%
\begin{equation*}
\begin{tabular}{|l|l|l|l|l|}
\hline
$p$ & $\underset{\widetilde{F}(e_{1})\cup _{P}\widetilde{G}(e_{1})}{%
\widetilde{H}(e_{1},e_{1})=}$ & $\underset{\widetilde{F}(e_{1})\cup _{P}%
\widetilde{G}(e_{2})}{\widetilde{H}(e_{1},e_{2})=}$ & $\underset{\widetilde{F%
}(e_{2})\cup _{P}\widetilde{G}(e_{1})}{\widetilde{H}(e_{2},e_{1})=}$ & $%
\underset{\widetilde{F}(e_{2})\cup _{P}\widetilde{G}(e_{2})}{\widetilde{H}%
(e_{2},e_{2})=}$ \\ \hline
$p_{1}$ & $\left[ 0.4,0.7\right] ,0.3$ & $\left[ 0.6,0.9\right] ,0.3$ & $%
\left[ 0.7,1\right] ,$ $\ \ 0.6$ & $\left[ 0.7,1\right] ,$ $\ \ 0.6$ \\ 
\hline
$p_{2}$ & $\left[ 0.4,0.7\right] ,0.6$ & $\left[ 0.4,0.7\right] ,0.4$ & $%
\left[ 0.5,0.8\right] ,0.7$ & $\left[ 0.5,0.8\right] ,0.7$ \\ \hline
$p_{3}$ & $\left[ 0.6,0.9\right] ,0.7$ & $\left[ 0.5,0.8\right] ,0.7$ & $%
\left[ 0.6,0.9\right] ,0.8$ & $\left[ 0.4,0.7\right] ,0.8$ \\ \hline
$p_{4}$ & $\left[ 0.6,0.9\right] ,0.9$ & $\left[ 0.6,0.9\right] ,0.5$ & $%
\left[ 0.5,0.8\right] ,0.9$ & $\left[ 0.4,0.7\right] ,0.9$ \\ \hline
\end{tabular}%
\end{equation*}%
R-OR is denoted by $\left( \widetilde{H,}\text{ }I\times J\right) =\left( 
\widetilde{F,}\text{ }I\right) \vee _{R}\left( \widetilde{G,}\text{ }%
J\right) $, where $I\times J=\{(e_{1},e_{1}),$ $(e_{1},e_{2}),$ $%
(e_{2},e_{1}),$ $(e_{2},e_{2})\}$, is defined%
\begin{equation*}
\begin{tabular}{|l|l|l|l|l|}
\hline
$p$ & $\underset{\widetilde{F}(e_{1})\cup _{R}\widetilde{G}(e_{1})}{%
\widetilde{H}(e_{1},e_{1})=}$ & $\underset{\widetilde{F}(e_{1})\cup _{R}%
\widetilde{G}(e_{2})}{\widetilde{H}(e_{1},e_{2})=}$ & $\underset{\widetilde{F%
}(e_{2})\cup _{R}\widetilde{G}(e_{1})}{\widetilde{H}(e_{2},e_{1})=}$ & $%
\underset{\widetilde{F}(e_{2})\cup _{R}\widetilde{G}(e_{2})}{\widetilde{H}%
(e_{2},e_{2})=}$ \\ \hline
$p_{1}$ & $\left[ 0.4,0.7\right] ,0.2$ & $\left[ 0.6,0.9\right] ,0.2$ & $%
\left[ 0.7,1\right] ,$ $\ \ 0.3$ & $\left[ 0.7,1\right] ,$ $\ \ 0.3$ \\ 
\hline
$p_{2}$ & $\left[ 0.4,0.7\right] ,0.4$ & $\left[ 0.4,0.7\right] ,0.2$ & $%
\left[ 0.5,0.8\right] ,0.6$ & $\left[ 0.5,0.8\right] ,0.2$ \\ \hline
$p_{3}$ & $\left[ 0.6,0.9\right] ,0.7$ & $\left[ 0.5,0.8\right] ,0.4$ & $%
\left[ 0.6,0.9\right] ,0.7$ & $\left[ 0.4,0.7\right] ,0.4$ \\ \hline
$p_{4}$ & $\left[ 0.6,0.9\right] ,0.5$ & $\left[ 0.6,0.9\right] ,0.5$ & $%
\left[ 0.5,0.8\right] ,0.9$ & $\left[ 0.4,0.7\right] ,0.5$ \\ \hline
\end{tabular}%
\end{equation*}%
P-AND is denoted by $\left( \widetilde{H,}\text{ }I\times J\right) =\left( 
\widetilde{F,}\text{ }I\right) \wedge _{P}\left( \widetilde{G,}\text{ }%
J\right) $, where $I\times J=\{(e_{1},e_{1}),$ $(e_{1},e_{2}),$ $%
(e_{2},e_{1}),$ $(e_{2},e_{2})\}$, is defined%
\begin{equation*}
\begin{tabular}{|l|l|l|l|l|}
\hline
$p$ & $\underset{\widetilde{F}(e_{1})\cap _{P}\widetilde{G}(e_{1})}{%
\widetilde{H}(e_{1},e_{1})=}$ & $\underset{\widetilde{F}(e_{1})\cap _{P}%
\widetilde{G}(e_{2})}{\widetilde{H}(e_{1},e_{2})=}$ & $\underset{\widetilde{F%
}(e_{2})\cap _{P}\widetilde{G}(e_{1})}{\widetilde{H}(e_{2},e_{1})=}$ & $%
\underset{\widetilde{F}(e_{2})\cap _{P}\widetilde{G}(e_{2})}{\widetilde{H}%
(e_{2},e_{2})=}$ \\ \hline
$p_{1}$ & $\left[ 0.3,0.6\right] ,0.2$ & $\left[ 0.3,0.6\right] ,0.2$ & $%
\left[ 0.4,0.7\right] ,0.3$ & $\left[ 0.6,0.9\right] ,0.3$ \\ \hline
$p_{2}$ & $\left[ 0.3,0.6\right] ,0.4$ & $\left[ 0.4,0.7\right] ,0.2$ & $%
\left[ 0.3,0.6\right] ,0.6$ & $\left[ 0.4,0.7\right] ,0.2$ \\ \hline
$p_{3}$ & $\left[ 0.5,0.8\right] ,0.7$ & $\left[ 0.3,0.6\right] ,0.4$ & $%
\left[ 0.4,0.7\right] ,0.7$ & $\left[ 0.3,0.6\right] ,0.4$ \\ \hline
$p_{4}$ & $\left[ 0.5,0.8\right] ,0.5$ & $\left[ 0.4,0.7\right] ,0.5$ & $%
\left[ 0.3,0.6\right] ,0.9$ & $\left[ 0.3,0.6\right] ,0.5$ \\ \hline
\end{tabular}%
\end{equation*}%
R-AND is denoted by $\left( \widetilde{H,}\text{ }I\times J\right) =\left( 
\widetilde{F,}\text{ }I\right) \wedge _{P}\left( \widetilde{G,}\text{ }%
J\right) $, where $I\times J=\{(e_{1},e_{1}),$ $(e_{1},e_{2}),$ $%
(e_{2},e_{1}),$ $(e_{2},e_{2})\}$, is defiend 
\begin{equation*}
\begin{tabular}{|l|l|l|l|l|}
\hline
$p$ & $\underset{\widetilde{F}(e_{1})\cap _{R}\widetilde{G}(e_{1})}{%
\widetilde{H}(e_{1},e_{1})=}$ & $\underset{\widetilde{F}(e_{1})\cap _{R}%
\widetilde{G}(e_{2})}{\widetilde{H}(e_{1},e_{2})=}$ & $\underset{\widetilde{F%
}(e_{2})\cap _{R}\widetilde{G}(e_{1})}{\widetilde{H}(e_{2},e_{1})=}$ & $%
\underset{\widetilde{F}(e_{2})\cap _{R}\widetilde{G}(e_{2})}{\widetilde{H}%
(e_{2},e_{2})=}$ \\ \hline
$p_{1}$ & $\left[ 0.3,0.6\right] ,0.3$ & $\left[ 0.3,0.6\right] ,0.3$ & $%
\left[ 0.4,0.7\right] ,0.6$ & $\left[ 0.6,0.9\right] ,0.6$ \\ \hline
$p_{2}$ & $\left[ 0.3,0.6\right] ,0.6$ & $\left[ 0.4,0.7\right] ,0.4$ & $%
\left[ 0.3,0.6\right] ,0.7$ & $\left[ 0.4,0.7\right] ,0.7$ \\ \hline
$p_{3}$ & $\left[ 0.5,0.8\right] ,0.7$ & $\left[ 0.3,0.6\right] ,0.7$ & $%
\left[ 0.4,0.7\right] ,0.8$ & $\left[ 0.3,0.6\right] ,0.8$ \\ \hline
$p_{4}$ & $\left[ 0.5,0.8\right] ,0.9$ & $\left[ 0.4,0.7\right] ,0.5$ & $%
\left[ 0.3,0.6\right] ,0.9$ & $\left[ 0.3,0.6\right] ,0.9$ \\ \hline
\end{tabular}%
\end{equation*}
\end{example}

\begin{definition}
The complement of a cubic soft set%
\begin{equation*}
\left( \widetilde{F,}\text{ }I\right) =\left\{ \widetilde{F}(e_{i})=\left\{
<x,\text{ }A_{e_{i}}(x),\text{ }\lambda _{e_{i}}(x)>:\text{ }x\in X\right\} 
\text{ }e_{i}\in I\right\}
\end{equation*}%
is denoted by $\left( \widetilde{F,}\text{ }I\right) ^{c}$ and defined as $%
\left( \widetilde{F,}\text{ }I\right) ^{c}=$ $(\widetilde{F}^{c},\lnot I),$
where $\widetilde{F}^{c}:\lnot I\longrightarrow CP(X)$ and 
\begin{eqnarray*}
\widetilde{F}^{c}(e_{i}) &=&(\widetilde{F}(\lnot e_{i}))^{c}\text{ for all }%
e_{i}\in \lnot I \\
&=&(\widetilde{F}(e_{i}))^{c}\text{ (as }\lnot (\lnot e_{i})=e_{i})
\end{eqnarray*}%
$\left( \widetilde{F,}\text{ }I\right) ^{c}=\{((\widetilde{F}%
(e_{i}))^{c}=\{<x,A_{e_{i}}^{c}(x),\lambda _{e_{i}}^{c}(x)>:x\in X\}$ $%
e_{i}\in I\}.$
\end{definition}

\begin{example}
Let $X=\left\{ p_{1},\text{ }p_{2},\text{ }p_{3},\text{ }p_{4}\right\} $ be
initial universe and $E=\left\{ e_{1},\text{ }e_{2},\text{ }e_{3}\right\} $
parameter's set. Let $\left( \widetilde{F,}\text{ }I\right) $ be a cubic
soft set over $X$ and defined as:%
\begin{equation*}
\begin{tabular}{|l|l|l|l|}
\hline
$p$ & $%
\begin{array}{c}
\widetilde{F}(e_{1})=\mathcal{A}_{1} \\ 
=<A_{e_{1}}(p),\text{ \ \ \ \ \ }\lambda _{e_{1}}(p)>%
\end{array}%
$ & $%
\begin{array}{c}
\widetilde{F}(e_{2})=\mathcal{A}_{2} \\ 
=<A_{e_{2}}(p),\text{ \ \ \ \ \ \ }\lambda _{e_{2}}(p)>%
\end{array}%
$ & $%
\begin{array}{c}
\widetilde{F}(e_{3})=\mathcal{A}_{3} \\ 
=<A_{e_{3}}(p),\text{ \ \ \ \ \ }\lambda _{e_{3}}(p)>%
\end{array}%
$ \\ \hline
$p_{1}$ & $\ \left[ 0.3,0.5\right] ,\text{ \ \ \ \ \ }0.6$ & $\ \left[
0.1,0.4\right] ,\text{ \ \ \ \ \ \ }0.5\text{\ }$ & $\ \left[ 0.5,0.8\right]
,\text{ \ \ \ \ \ \ }0.7$ \\ \hline
$p_{2}$ & $\ \left[ 0.4,0.6\right] ,\text{ \ \ \ \ \ }0.5$ & $\ \left[
0.5,0.7\right] ,\text{ \ \ \ \ \ \ }0.4$ & $\ \left[ 0.6,0.9\right] ,\text{
\ \ \ \ \ \ }0.4$ \\ \hline
$p_{3}$ & $\ \left[ 0.6,0.8\right] ,\text{ \ \ \ \ \ }0.7$ & $\ \left[
0.4,0.7\right] ,\text{ \ \ \ \ \ \ }0.5$ & $\ \left[ 0.4,0.7\right] ,\text{
\ \ \ \ \ \ }0.3$ \\ \hline
$p_{4}$ & $\ \left[ 0.2,0.4\right] ,\text{ \ \ \ \ \ }0.3$ & $\ \left[
0.4,0.7\right] ,\text{ \ \ \ \ \ \ }0.7$ & $\ \left[ 0.3,0.6\right] ,\text{
\ \ \ \ \ \ }0.8.$ \\ \hline
\end{tabular}%
\end{equation*}%
Then, $\left( \widetilde{F,}\text{ }I\right) ^{c}==\{((\widetilde{F}%
(e_{i}))^{c}=\{<x,A_{e_{i}}^{c}(x),\lambda _{e_{i}}^{c}(x)>:x\in X\}$ $%
e_{i}\in I\}$ is defined as:%
\begin{eqnarray*}
&&%
\begin{tabular}{|l|l|}
\hline
$p$ & $%
\begin{array}{c}
((\widetilde{F}(e_{1}))^{c}=\mathcal{A}_{1}^{c}= \\ 
\text{ \ \ \ \ \ \ \ }\ \ <A_{e_{1}}^{c}(p),\text{\ \ \ \ \ \ \ \ \ \ \ \ \
\ }\lambda _{1}^{c}(p)> \\ 
=<\left[ 1-A_{e_{1}}^{+}(p),1-A_{e_{1}}^{-}(p)\right] ,\text{ \ }1-\lambda
_{e_{1}}(p)>%
\end{array}%
$ \\ \hline
$p_{1}$ & $\ \ \ \ \ \ \ \ \ \ \ \ \ \left[ 0.5,0.7\right] ,\text{ \ \ \ \ \
\ \ \ \ \ \ \ \ \ \ \ }0.4$ \\ \hline
$p_{2}$ & $\ \ \ \ \ \ \ \ \ \ \ \ \ \left[ 0.4,0.6\right] ,\text{ \ \ \ \ \
\ \ \ \ \ \ \ \ \ \ \ }0.5$ \\ \hline
$p_{3}$ & $\ \ \ \ \ \ \ \ \ \ \ \ \ \left[ 0.2,0.4\right] ,\text{ \ \ \ \ \
\ \ \ \ \ \ \ \ \ \ \ }0.3$ \\ \hline
$p_{4}$ & $\ \ \ \ \ \ \ \ \ \ \ \ \ \left[ 0.6,0.8\right] ,\text{ \ \ \ \ \
\ \ \ \ \ \ \ \ \ \ \ }0.7$ \\ \hline
\end{tabular}
\\
&&%
\begin{tabular}{|l|l|}
\hline
$p$ & $%
\begin{array}{c}
((\widetilde{F}(e_{2}))^{c}=\mathcal{A}_{2}^{c}= \\ 
\text{ \ \ \ \ \ \ \ }<A_{e_{2}}^{c}(p),\text{\ \ \ \ \ \ \ \ \ \ \ \ \ \ \
\ \ }\lambda _{2}^{c}(p)> \\ 
=<\left[ 1-A_{e_{2}}^{+}(p),1-A_{e_{2}}^{-}(p)\right] ,\text{ \ }1-\lambda
_{e_{2}}(p)>%
\end{array}%
$ \\ \hline
$p_{1}$ & $\ \ \ \ \ \ \ \ \ \ \ \ \ \ \left[ 0.6,0.9\right] ,\text{ \ \ \ \
\ \ \ \ \ \ \ \ \ \ }0.5$ \\ \hline
$p_{2}$ & $\ \ \ \ \ \ \ \ \ \ \ \ \ \ \left[ 0.3,0.5\right] ,\text{ \ \ \ \
\ \ \ \ \ \ \ \ \ \ }0.6$ \\ \hline
$p_{3}$ & $\ \ \ \ \ \ \ \ \ \ \ \ \ \ \left[ 0.3,0.6\right] ,\text{ \ \ \ \
\ \ \ \ \ \ \ \ \ \ }0.5$ \\ \hline
$p_{4}$ & $\ \ \ \ \ \ \ \ \ \ \ \ \ \ \left[ 0.3,0.6\right] ,\text{ \ \ \ \
\ \ \ \ \ \ \ \ \ \ }0.3$ \\ \hline
\end{tabular}
\\
&&%
\begin{tabular}{|l|l|}
\hline
$p$ & $%
\begin{array}{c}
((\widetilde{F}(e_{3}))^{c}=\mathcal{A}_{3}^{c}= \\ 
\text{ \ \ \ \ \ \ \ \ }<A_{e_{3}}^{c}(p),\text{\ \ \ \ \ \ \ \ \ \ \ \ }%
\lambda _{e_{3}}^{c}(p)> \\ 
=<\left[ 1-A_{e_{3}}^{+}(p),1-A_{e_{3}}^{-}(p)\right] ,1-\lambda _{e_{3}}(p)>%
\end{array}%
$ \\ \hline
$p_{1}$ & $\ \ \ \ \ \ \ \ \ \ \ \ \ \ \left[ 0.2,0.5\right] ,\text{ \ \ \ \
\ \ \ \ \ \ \ \ \ }0.3$ \\ \hline
$p_{2}$ & $\ \ \ \ \ \ \ \ \ \ \ \ \ \ \ \left[ 0.1,0.4\right] ,\text{ \ \ \
\ \ \ \ \ \ \ \ \ }0.6$ \\ \hline
$p_{3}$ & $\ \ \ \ \ \ \ \ \ \ \ \ \ \ \left[ 0.3,0.6\right] ,\text{ \ \ \ \
\ \ \ \ \ \ \ \ \ }0.7$ \\ \hline
$p_{4}$ & $\ \ \ \ \ \ \ \ \ \ \ \ \ \ \left[ 0.4,0.7\right] ,\text{ \ \ \ \
\ \ \ \ \ \ \ \ \ }0.2$ \\ \hline
\end{tabular}%
\end{eqnarray*}
\end{example}

\begin{proposition}
Let $X$ be initial universe and $I$, $J$, $L$ and $S$ subset of $E$. Then,
for any cubic soft sets $\left( \widetilde{F,}\text{ }I\right) $, $\left( 
\widetilde{G,}\text{ }J\right) $, $\left( \widetilde{E,}\text{ }L\right) $
and $\left( \widetilde{T,}\text{ }S\right) $ the following properties hold.

\begin{enumerate}
\item If $\left( \widetilde{F,}\text{ }I\right) \subseteq _{P}\left( 
\widetilde{G,}\text{ }J\right) $ and $\left( \widetilde{G,}\text{ }J\right)
\subseteq _{P}\left( \widetilde{E,}\text{ }L\right) ,$ then$\left( 
\widetilde{F,}\text{ }I\right) \subseteq _{P}(\widetilde{E,}$ $L).$

\item If $\left( \widetilde{F,}\text{ }I\right) \subseteq _{P}\left( 
\widetilde{G,}\text{ }J\right) ,$ then $\left( \widetilde{G,}\text{ }%
J\right) ^{c}\subseteq _{P}\left( \widetilde{F,}\text{ }I\right) ^{c}$ if $%
I=J.$

\item If $\left( \widetilde{F,}\text{ }I\right) \subseteq _{P}\left( 
\widetilde{G,}\text{ }J\right) $ and $\left( \widetilde{F,}\text{ }I\right)
\subseteq _{P}\left( \widetilde{E,}\text{ }L\right) ,$ then $\left( 
\widetilde{F,}\text{ }I\right) \subseteq _{P}\left( \widetilde{G,}\text{ }%
J\right) \cap _{P}\left( \widetilde{E,}\text{ }L\right) .$

\item If $\left( \widetilde{F,}\text{ }I\right) \subseteq _{P}\left( 
\widetilde{G,}\text{ }J\right) $ and $\left( \widetilde{E,}\text{ }L\right)
\subseteq _{P}\left( \widetilde{G,}\text{ }J\right) ,$ then $\left( 
\widetilde{F,}\text{ }I\right) \cup _{P}\left( \widetilde{E,}\text{ }%
L\right) \subseteq _{P}\left( \widetilde{G,}\text{ }J\right) .$

\item If $\left( \widetilde{F,}\text{ }I\right) \subseteq _{P}\left( 
\widetilde{G,}\text{ }J\right) $ and $\left( \widetilde{E,}\text{ }L\right)
\subseteq _{P}\left( \widetilde{T,}\text{ }S\right) ,$ then (a) $\left( 
\widetilde{F,}\text{ }I\right) \cup _{P}\left( \widetilde{E,}\text{ }%
L\right) \subseteq _{P}\left( \widetilde{G,}\text{ }J\right) \cup _{P}\left( 
\widetilde{T,}\text{ }S\right) $ and (b) $\left( \widetilde{F,}\text{ }%
I\right) \cap _{P}\left( \widetilde{E,}\text{ }L\right) \subseteq _{P}\left( 
\widetilde{G,}\text{ }J\right) \cap _{P}\left( \widetilde{T,}\text{ }%
S\right) .$

\item If $\left( \widetilde{F,}\text{ }I\right) \subseteq _{R}\left( 
\widetilde{G,}\text{ }J\right) $ and $\left( \widetilde{G,}\text{ }J\right)
\subseteq _{R}\left( \widetilde{E,}\text{ }L\right) ,$ then $\left( 
\widetilde{F,}\text{ }I\right) \subseteq _{R}\left( \widetilde{E,}\text{ }%
L\right) .$

\item If $\left( \widetilde{F,}\text{ }I\right) \subseteq _{R}\left( 
\widetilde{G,}\text{ }J\right) ,$ then $\left( \widetilde{G,}\text{ }%
J\right) ^{c}\subseteq _{R}\left( \widetilde{F,}\text{ }I\right) ^{c}$ if $%
I=J.$

\item If $\left( \widetilde{F,}\text{ }I\right) \subseteq _{R}\left( 
\widetilde{G,}\text{ }J\right) $ and $\left( \widetilde{F,}\text{ }I\right)
\subseteq _{R}\left( \widetilde{E,}\text{ }L\right) ,$ then $\left( 
\widetilde{F,}\text{ }I\right) \subseteq _{R}\left( \widetilde{G,}\text{ }%
J\right) \cap _{R}\left( \widetilde{E,}\text{ }L\right) .$

\item If $\left( \widetilde{F,}\text{ }I\right) \subseteq _{R}\left( 
\widetilde{G,}\text{ }J\right) $ and $\left( \widetilde{E,}\text{ }L\right)
\subseteq _{R}\left( \widetilde{G,}\text{ }J\right) ,$ then $\left( 
\widetilde{F,}\text{ }I\right) \cup _{R}\left( \widetilde{E,}\text{ }%
L\right) \subseteq _{R}\left( \widetilde{G,}\text{ }J\right) .$

\item If $\left( \widetilde{F,}\text{ }I\right) \subseteq _{R}\left( 
\widetilde{G,}\text{ }J\right) $ and $\left( \widetilde{E,}\text{ }L\right)
\subseteq _{R}\left( \widetilde{T,}\text{ }S\right) ,$ then (a)$\left( 
\widetilde{F,}\text{ }I\right) \cup _{R}\left( \widetilde{E,}\text{ }%
L\right) \subseteq _{R}\left( \widetilde{G,}\text{ }J\right) \cup _{R}\left( 
\widetilde{T,}\text{ }S\right) $ and

(b) $\left( \widetilde{F,}\text{ }I\right) \cap _{R}\left( \widetilde{E,}%
\text{ }L\right) \subseteq _{R}\left( \widetilde{G,}\text{ }J\right) \cap
_{R}\left( \widetilde{T,}\text{ }S\right) .$
\end{enumerate}
\end{proposition}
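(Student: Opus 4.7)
The plan is to verify each of the ten items by unfolding the relevant definitions ($\subseteq_P$, $\subseteq_R$, $\cup_P$, $\cap_P$, $\cup_R$, $\cap_R$, and complement) and reducing every inclusion to a pointwise comparison of an IVF component and a fuzzy component. Throughout, I would repeatedly use the following three facts: (a) $\subseteq$ on parameter subsets of $E$ is transitive, (b) the interval order $\preccurlyeq$ is transitive because it is defined componentwise on the lower and upper fuzzy sets, and (c) the fuzzy-set orders $\leq$ and $\geq$ are transitive and interact in the expected way with $\vee,\wedge$ and $r\max,r\min$.

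First I would handle the transitivity items (1) and (6): if $I\subseteq J\subseteq L$ and the pointwise orders $A_{e_i}\preccurlyeq B_{e_i}\preccurlyeq C_{e_i}$ together with $\lambda_{e_i}\leq \mu_{e_i}\leq \nu_{e_i}$ (respectively $\lambda_{e_i}\geq \mu_{e_i}\geq \nu_{e_i}$ for the R-case) hold for every $e_i\in I$ and every $x\in X$, then the conclusion is immediate. For items (3), (4), (8), (9), I would combine two inclusions on the same ``left'' (or ``right'') side and note that $r\min,r\max$ realize the meet and join for $\preccurlyeq$ while $\wedge,\vee$ do the same for $\leq$; the side conditions on the parameter sets follow from elementary set theory ($I\subseteq J\cap L$ from $I\subseteq J$ and $I\subseteq L$, and $I\cup L\subseteq J$ from $I\subseteq J$ and $L\subseteq J$). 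For the monotonicity items (5) and (10), I would split each parameter $e_i\in I\cup L$ (respectively $I\cap L$) into the three cases appearing in the piecewise definition of $\cup_P,\cup_R$ (and the single case for $\cap_P,\cap_R$) and apply the corresponding pointwise inequalities together with the monotonicity of $r\max,r\min,\vee,\wedge$.

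For the complement items (2) and (7), the key ingredient is that taking complements reverses both of the two coordinate orders simultaneously: $A^c_{e_i}=[1-A^+_{e_i},1-A^-_{e_i}]$ so $A_{e_i}\preccurlyeq B_{e_i}$ becomes $B^c_{e_i}\preccurlyeq A^c_{e_i}$, and $\lambda^c_{e_i}=1-\lambda_{e_i}$ so $\lambda_{e_i}\leq \mu_{e_i}$ becomes $\mu^c_{e_i}\leq \lambda^c_{e_i}$ (and the same reversal for $\geq$ in the R-case). The hypothesis $I=J$ is used to ensure the complements are defined on the same Not-set $\lnot I=\lnot J$, so the parameter-set condition of the reverse inclusion is automatic.

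The main obstacle is not mathematical depth but bookkeeping: for the union/intersection items one must carefully track which parameters lie in $I\setminus J$, $J\setminus I$, or $I\cap J$ under the piecewise definitions of $\cup_P$ and $\cup_R$, and verify the pointwise inequality in each case. Once that case analysis is laid out, each individual verification is a one-line application of the appropriate monotonicity of $r\max,r\min,\wedge,\vee$. The R-versions (6)--(10) are proved by the same template as (1)--(5) with the fuzzy inequality flipped and $\vee/\wedge$ swapped in the right places according to the definitions of $\vee_R$ and $\wedge_R$.
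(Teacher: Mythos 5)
Your proposal is correct; the paper itself gives no argument for this proposition (its proof reads only ``Proof is straightforward''), and your outline --- reducing each item to pointwise transitivity and monotonicity of $\preccurlyeq$, $\leq$, $r\max$, $r\min$, $\vee$, $\wedge$, together with the order reversal under complementation and the elementary parameter-set inclusions --- is exactly the straightforward argument the authors intend. The only point worth making explicit when you write it out is that in items (5) and (10) the hypotheses $I\subseteq J$ and $L\subseteq S$ rule out the mismatched cases (e.g.\ $e_{i}\in I\cap L$ but $e_{i}\notin J\cap S$), which is what makes the two piecewise decompositions compatible.
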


\begin{proof}
\begin{proof}
Proof is straightforward.
\end{proof}
\end{proof}

\begin{theorem}
Let $\left( \widetilde{F,}\text{ }I\right) $ be a cubic soft set over $X.$
\end{theorem}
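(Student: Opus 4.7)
The statement as quoted ends at the hypothesis ``Let $\left(\widetilde{F},I\right)$ be a cubic soft set over $X$'' and contains no conclusion, so there is no specific mathematical assertion to discharge. The honest response is to flag this: any proof plan must be contingent on what the missing conclusion turns out to be. Judging from the immediately preceding material, the natural candidates are (i) a De~Morgan style identity relating complement to the $P$- and $R$-unions/intersections, (ii) an involution statement $\left(\left(\widetilde{F},I\right)^{c}\right)^{c}=\left(\widetilde{F},I\right)$ using $\lnot(\lnot e_{i})=e_{i}$, or (iii) a preservation/swap result asserting that complementation sends an ICSS to an ICSS (resp.\ an ECSS to an ECSS).

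Independently of which of these is intended, my plan would be to argue parameter-wise and then pointwise. Fix $e_{i}$ in the relevant index set, write $\widetilde{F}(e_{i})=\langle A_{e_{i}},\lambda_{e_{i}}\rangle$ with $A_{e_{i}}=[A_{e_{i}}^{-},A_{e_{i}}^{+}]$, and apply the complement formula from the preceding definition, namely $A_{e_{i}}^{c}(x)=\bigl[1-A_{e_{i}}^{+}(x),\,1-A_{e_{i}}^{-}(x)\bigr]$ and $\lambda_{e_{i}}^{c}(x)=1-\lambda_{e_{i}}(x)$. The subsequent work is elementary arithmetic on $[0,1]$: for a De~Morgan identity, invoke the scalar identities $1-\max\{a,b\}=\min\{1-a,1-b\}$ and $1-\min\{a,b\}=\max\{1-a,1-b\}$ componentwise on the interval part and on the fuzzy part; for the ICSS/ECSS preservation route, apply the order-reversing map $t\mapsto 1-t$ to the chain $A_{e_{i}}^{-}(x)\leq \lambda_{e_{i}}(x)\leq A_{e_{i}}^{+}(x)$ to produce $1-A_{e_{i}}^{+}(x)\leq 1-\lambda_{e_{i}}(x)\leq 1-A_{e_{i}}^{-}(x)$, which is exactly the ICSS condition for the complement; the ECSS case is symmetric.

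The chief obstacle is therefore not computational but editorial, since I cannot certify a proof without the conclusion. A secondary, genuine obstacle is the interaction between complementation and the two distinct orders of Definition~6: complement reverses the fuzzy component but its action on the interval-valued component $A\mapsto A^{c}$ preserves the $\preccurlyeq$-order, so a statement phrased in $\subseteq_{P}$ terms will translate to $\subseteq_{R}$ terms after complementation and vice versa; this must be tracked carefully whenever $\lnot I$ appears, using the convention $\lnot(\lnot e_{i})=e_{i}$ to identify the index set of $\left(\left(\widetilde{F},I\right)^{c}\right)^{c}$ with $I$. Once the intended conclusion is supplied, the pointwise template above should dispatch it in a few lines.
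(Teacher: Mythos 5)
The conclusion missing from the quoted statement is exactly your candidate (iii): the theorem continues with an enumerated list asserting that the complement of an ICSS is an ICSS and the complement of an ECSS is an ECSS. Your sketch for that case --- applying the order-reversing map $t\mapsto 1-t$ to the chain $A_{e_{i}}^{-}(x)\leq \lambda_{e_{i}}(x)\leq A_{e_{i}}^{+}(x)$ to obtain $1-A_{e_{i}}^{+}(x)\leq 1-\lambda_{e_{i}}(x)\leq 1-A_{e_{i}}^{-}(x)$, with the symmetric disjunction argument ($\lambda_{e_{i}}(x)\leq A_{e_{i}}^{-}(x)$ or $\lambda_{e_{i}}(x)\geq A_{e_{i}}^{+}(x)$) for the external case --- is precisely the paper's own proof, so your proposal is correct and takes essentially the same approach.
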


\begin{enumerate}
\item If $\left( \widetilde{F,}\text{ }I\right) $ is an internal cubic soft
set, then $\left( \widetilde{F,}\text{ }I\right) ^{c}$ is also an internal
cubic soft set (ICSS).

\item If $\left( \widetilde{F,}\text{ }I\right) $ is an external cubic soft
set, then $\left( \widetilde{F,}\text{ }I\right) ^{c}$ is also an external
cubic soft set (ECSS).
\end{enumerate}

\begin{proof}
(1) Given$\left( \widetilde{F,}\text{ }I\right) =\left\{ \widetilde{F}%
(e_{i})=\left\{ <x,\text{ }A_{e_{i}}(x),\text{ }\lambda _{e_{i}}(x)>:\text{ }%
x\in X\right\} \text{ }e_{i}\in I,\right\} $ is an ICSS this implies $%
A_{e_{i}}^{-}(x)\leq \lambda _{e_{i}}(x)\leq A_{e_{i}}^{+}(x)$ for all $%
e_{i}\in I$ and for all $x\in X,$ this implies $1-A_{e_{i}}^{+}(x)\leq
1-\lambda _{e_{i}}(x)\leq 1-A_{e_{i}}^{-}(x)$ for all $e_{i}\in I$ and for
all $x\in X.$ Also we have

Hence $\left( \widetilde{F,}\text{ }I\right) ^{c}$ is an ICSS.

(2{\large ) }Given $\left( \widetilde{F,}\text{ }I\right) =\left\{ 
\widetilde{F}(e_{i})=\left\{ <x,\text{ }A_{e_{i}}(x),\text{ }\lambda
_{e_{i}}(x)>:\text{ }x\in X\right\} \text{ }e_{i}\in I,\right\} $ is an ECSS
this implies $\lambda _{e_{i}}(x)\notin (A_{e_{i}}^{-}(x),A_{e_{i}}^{+}(x))$
for all $e_{i}\in I$ and for all $x\in X.$ Since $\lambda _{e_{i}}(x)\notin
((A_{e_{i}}^{-}(x),A_{e_{i}}^{+}(x))$ and $0\leq A_{e_{i}}^{-}(x)\leq
A_{e_{i}}^{+}(x)\leq 1.$ So we have $\lambda _{e_{i}}(x)\leq
A_{e_{i}}^{-}(x) $ or $A_{e_{i}}^{+}(x)\leq \lambda _{e_{i}}(x)$ this
implies $1-\lambda _{e_{i}}(x)\geq 1-A_{e_{i}}^{-}(x)$ or $%
1-A_{e_{i}}^{+}(x)\geq 1-\lambda _{e_{i}}(x).$ Thus $1-\lambda
_{e_{i}}(x)\notin (1-A_{e_{i}}^{-}(x),1-A_{e_{i}}^{+}(x))$ for all $e_{i}\in
I$ and for all $x\in X.$ Hence

$\left( \widetilde{F,}\text{ }I\right) ^{c}$ is an ECSS.
\end{proof}

\begin{theorem}
Let $\left( \widetilde{F,}\text{ }I\right) =\left\{ \widetilde{F}%
(e_{i})=\left\{ <x,\text{ }A_{e_{i}}(x),\text{ }\lambda _{e_{i}}(x)>:\text{ }%
x\in X\right\} \text{ }e_{i}\in I\right\} $ and $\left( \widetilde{G,}\text{ 
}J\right) =\left\{ \widetilde{G}(e_{i})=\left\{ <x,\text{ }B_{e_{i}}(x),%
\text{ }\mu _{e_{i}}(x)>:\text{ }x\in X\right\} \text{ }e_{i}\in J\right\} $
be internal cubic soft sets. Then,

$\left( 1\right) $ $\left( \widetilde{F,}\text{ }I\right) \cup _{P}\left( 
\widetilde{G,}\text{ }J\right) $ is an ICSS.

$\left( 2\right) $ $\left( \widetilde{F,}\text{ }I\right) \cap _{P}\left( 
\widetilde{G,}\text{ }J\right) $ is an ICSS.
\end{theorem}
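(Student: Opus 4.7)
The plan is to unpack the definition of ICSS and handle the P-union and P-intersection by cases on the index set, reducing everything to elementary monotonicity of $\max$ and $\min$.

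For part (1), write $(\widetilde{F},I)\cup_P(\widetilde{G},J)=(\widetilde{H},C)$ with $C=I\cup J$. I would split into three cases: $e_i\in I\setminus J$, $e_i\in J\setminus I$, and $e_i\in I\cap J$. In the first two cases $\widetilde{H}(e_i)$ is literally $\widetilde{F}(e_i)$ or $\widetilde{G}(e_i)$, so the ICSS sandwich $A^{-}_{e_i}(x)\leq\lambda_{e_i}(x)\leq A^{+}_{e_i}(x)$ (respectively for $B,\mu$) is inherited directly from the hypothesis. The only case that requires work is $e_i\in I\cap J$: here I must show
\[
\max\{A^{-}_{e_i}(x),B^{-}_{e_i}(x)\}\ \leq\ \max\{\lambda_{e_i}(x),\mu_{e_i}(x)\}\ \leq\ \max\{A^{+}_{e_i}(x),B^{+}_{e_i}(x)\}
\]
for every $x\in X$. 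Both inequalities follow at once from the monotonicity of $\max$ applied componentwise to the ICSS inequalities $A^{-}_{e_i}(x)\leq\lambda_{e_i}(x)\leq A^{+}_{e_i}(x)$ and $B^{-}_{e_i}(x)\leq\mu_{e_i}(x)\leq B^{+}_{e_i}(x)$. Since the lower (resp. upper) endpoint of $r\max\{A_{e_i},B_{e_i}\}$ is exactly the pointwise max of the lower (resp. upper) endpoints, this is precisely the ICSS condition for $\widetilde{H}(e_i)$.

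For part (2), the P-intersection is only defined on $C=I\cap J$, so there are no side cases: for every $e_i\in C$ I need
\[
\min\{A^{-}_{e_i}(x),B^{-}_{e_i}(x)\}\ \leq\ \min\{\lambda_{e_i}(x),\mu_{e_i}(x)\}\ \leq\ \min\{A^{+}_{e_i}(x),B^{+}_{e_i}(x)\}.
\]
Again this is immediate from the two ICSS sandwiches and the monotonicity of $\min$, and the endpoints of $r\min\{A_{e_i},B_{e_i}\}$ match up with the pointwise $\min$ of the endpoints.

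There is essentially no obstacle here; the proof is bookkeeping rather than insight. The one pitfall to watch is the case split in (1): one must remember to treat $I\setminus J$ and $J\setminus I$ separately (where the ICSS property is inherited unchanged) before invoking the monotonicity argument on $I\cap J$. A companion remark worth recording is that the same monotonicity principle yields immediately that an analogous statement fails in general for R-union/R-intersection, which justifies restricting the theorem to the P-operations.
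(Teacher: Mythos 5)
Your proof is correct and follows essentially the same route as the paper's: reduce to the definition of ICSS, split on $e_i\in I\setminus J$, $J\setminus I$, $I\cap J$, and apply monotonicity of $\max$ (resp.\ $\min$) componentwise to the two sandwich inequalities, identifying the endpoints of $r\max$ (resp.\ $r\min$) with the pointwise extrema of the endpoints. Your closing remark that the argument does not transfer to the R-operations is consistent with the paper, which only proves R-union and R-intersection of ICSSs are ICSSs under additional hypotheses; note, though, that asserting outright failure would require an explicit counterexample, which you do not supply.
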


\begin{proof}
(1) Since $\left( \widetilde{F,}\text{ }I\right) $ and $\left( \widetilde{G,}%
\text{ }J\right) $ are internal cubic soft sets. So for $\left( \widetilde{F,%
}\text{ }I\right) $ we have $A_{e_{i}}^{-}(x)\leq \lambda _{e_{i}}(x)\leq
A_{e_{i}}^{+}(x)$ for all $e_{i}\in I$ and for all $x\in X.$ Also for $%
\left( \widetilde{G,}\text{ }J\right) $ we have $B_{e_{i}}^{-}(x)\leq \mu
_{e_{i}}(x)\leq B_{e_{i}}^{+}(x)$ for all $e_{i}\in J$ and for all $x\in X.$
Then we have $\max \{A_{e_{i}}^{-}(x),B_{e_{i}}^{-}(x)\}\leq (\lambda
_{e_{i}}\vee \mu _{e_{i}})(x)\leq \max \{A_{e_{i}}^{+}(x),B_{e_{i}}^{+}(x)\}$
for all $e_{i}\in I\cup J$ and for all $x\in X$. Now by definition of
P-union of $\left( \widetilde{F,}\text{ }I\right) $ and $\left( \widetilde{G,%
}\text{ }J\right) ,$ we have $\left( \widetilde{F,}\text{ }I\right) \cup
_{P}\left( \widetilde{G,}\text{ }J\right) =\left( \widetilde{H,}\text{ }%
C\right) $ where $C=I\cup J$ and%
\begin{equation*}
H(e_{i})=\left\{ 
\begin{array}{c}
\begin{array}{c}
\widetilde{F}(e_{i})\text{ If }e_{i}\in I-J \\ 
\widetilde{G}(e_{i})\text{ If }e_{i}\in J-I%
\end{array}%
\text{ \ \ \ \ \ \ \ \ \ \ \ \ } \\ 
\widetilde{F}(e_{i})\vee _{P}\widetilde{G}(e_{i})\text{ If }e_{i}\in I\cap J%
\end{array}%
\right.
\end{equation*}%
If $e_{i}\in I\cap J,$ then $\widetilde{F}(e_{i})\vee _{P}\widetilde{G}%
(e_{i})$ is defined as%
\begin{equation*}
\widetilde{F}(e_{i})\vee _{P}\widetilde{G}(e_{i})=\widetilde{H}%
(e_{i})=\left\{ 
\begin{array}{c}
<x,\text{ }r\max \{A_{e_{i}}(x),B_{e_{i}}(x)\},(\lambda _{e_{i}}\vee \mu
_{e_{i}})(x)>: \\ 
\text{ }x\in X,\text{ }e_{i}\in I\cap J.%
\end{array}%
\right\}
\end{equation*}%
Thus $\left( \widetilde{F,}\text{ }I\right) \cup _{P}\left( \widetilde{G,}%
\text{ }J\right) $ is an ICSS if $e_{i}\in I\cap J.$ If $e_{i}\in I-J$ or $%
e_{i}\in J-I$ , then the result is trivial. Hence, $\left( \widetilde{F,}%
\text{ }I\right) \cup _{P}\left( \widetilde{G,}\text{ }J\right) $ is an ICSS
in all cases.

(2) Since $\left( \widetilde{F,}\text{ }I\right) \cap _{P}\left( \widetilde{%
G,}\text{ }J\right) =\left( \widetilde{H,}\text{ }C\right) ,$ where $C=I\cap
J$ and $\widetilde{H}(e_{i})=\widetilde{F}(e_{i})\wedge _{P}\widetilde{G}%
(e_{i}).$ If $e_{i}\in I\cap J,$ then $\widetilde{F}(e_{i})\wedge _{P}%
\widetilde{G}(e_{i})$ is defined as 
\begin{equation*}
\widetilde{F}(e_{i})\wedge _{P}\widetilde{G}(e_{i})=\widetilde{H}%
(e_{i})=\left\{ 
\begin{array}{c}
<x,\text{ }r\min \{A_{e_{i}}(x),B_{e_{i}}(x)\},(\lambda _{e_{i}}\wedge \mu
_{e_{i}})(x)>: \\ 
\text{ }x\in X,\text{ }e_{i}\in I\cap J.%
\end{array}%
\right\}
\end{equation*}%
Also given that $\left( \widetilde{F,}\text{ }I\right) $ and $\left( 
\widetilde{G,}\text{ }J\right) $ are internal cubic soft sets. So for $%
\left( \widetilde{F,}\text{ }I\right) $ we have $A_{e_{i}}^{-}(x)\leq
\lambda _{e_{i}}(x)\leq A_{e_{i}}^{+}(x)$ for all $e_{i}\in I$ and for all $%
x\in X.$ And for $\left( \widetilde{G,}\text{ }J\right) $ we have $%
B_{e_{i}}^{-}(x)\leq \mu _{e_{i}}(x)\leq B_{e_{i}}^{+}(x)$ for all $e_{i}\in
J$ and for all $x\in X.$ This implies $\min
\{A_{e_{i}}^{-}(x),B_{e_{i}}^{-}(x)\}\leq (\lambda _{e_{i}}\wedge \mu
_{e_{i}})(x)\leq \min \{A_{e_{i}}^{+}(x),B_{e_{i}}^{+}(x)\}$ for all $%
e_{i}\in I\cap J.$ Hence $\left( \widetilde{H,}\text{ }C\right) =\left( 
\widetilde{F,}\text{ }I\right) \cap _{P}\left( \widetilde{G,}\text{ }%
J\right) $ is an internal cubic soft set (ICSS).
\end{proof}

\begin{theorem}
Let $\left( \widetilde{F,}\text{ }I\right) $ and $\left( \widetilde{G,}\text{
}J\right) $ be ICSSs over $X$ such that $\max
\{A_{e_{i}}^{-}(x),B_{e_{i}}^{-}(x)\}\leq (\lambda _{e_{i}}\wedge \mu
_{e_{i}})(x)$ for all $e_{i}\in I\cap J$ and for all $x\in X.$ Then, the
R-union of $\left( \widetilde{F,}\text{ }I\right) $ and $\left( \widetilde{G,%
}\text{ }J\right) $ is also an ICSS.
\end{theorem}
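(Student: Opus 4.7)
The plan is to split the R-union $(\widetilde{H}, C) = (\widetilde{F}, I) \cup_R (\widetilde{G}, J)$, where $C = I \cup J$, into three cases based on how $\widetilde{H}(e_i)$ is defined: $e_i \in I - J$, $e_i \in J - I$, and $e_i \in I \cap J$. The first two cases are immediate since on those parameters $\widetilde{H}$ agrees with $\widetilde{F}$ or $\widetilde{G}$, which are ICSSs by hypothesis, so the ICSS inequality $H_{e_i}^-(x) \leq \eta_{e_i}(x) \leq H_{e_i}^+(x)$ holds pointwise. I would dispose of these two cases in one sentence and then focus all the work on $e_i \in I \cap J$.

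For $e_i \in I \cap J$, the R-union is given by $\widetilde{H}(e_i) = \langle x, r\max\{A_{e_i}(x), B_{e_i}(x)\}, (\lambda_{e_i} \wedge \mu_{e_i})(x)\rangle$, so I need to verify
\begin{equation*}
\max\{A_{e_i}^-(x), B_{e_i}^-(x)\} \;\leq\; (\lambda_{e_i} \wedge \mu_{e_i})(x) \;\leq\; \max\{A_{e_i}^+(x), B_{e_i}^+(x)\}
\end{equation*}
for every $x \in X$. The left inequality is precisely the hypothesis of the theorem, so it transfers verbatim. The right inequality is where the ICSS assumption on each of $(\widetilde{F}, I)$ and $(\widetilde{G}, J)$ does the work: since $\lambda_{e_i}(x) \leq A_{e_i}^+(x)$ and $\mu_{e_i}(x) \leq B_{e_i}^+(x)$, I get $(\lambda_{e_i} \wedge \mu_{e_i})(x) \leq \lambda_{e_i}(x) \leq A_{e_i}^+(x) \leq \max\{A_{e_i}^+(x), B_{e_i}^+(x)\}$ (and symmetrically through $\mu_{e_i}$). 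Either chain suffices.

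The only subtle step is the lower bound; this is exactly the extra hypothesis that has been placed on $(\widetilde{F}, I)$ and $(\widetilde{G}, J)$, so strictly speaking there is no obstacle — the hypothesis was tailor-made for this inequality. The upper bound is essentially free from the two ICSS assumptions via the elementary observation that $\min$ of two numbers is bounded above by either of them. Putting the three cases together yields $H_{e_i}^-(x) \leq \eta_{e_i}(x) \leq H_{e_i}^+(x)$ for every $e_i \in I \cup J$ and every $x \in X$, which by Definition of ICSS means $(\widetilde{H}, C) = (\widetilde{F}, I) \cup_R (\widetilde{G}, J)$ is an internal cubic soft set, completing the proof.
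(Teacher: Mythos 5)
Your proposal is correct and follows essentially the same route as the paper: both reduce the problem to the case $e_{i}\in I\cap J$ (dismissing $I-J$ and $J-I$ as trivial), obtain the upper bound $(\lambda _{e_{i}}\wedge \mu _{e_{i}})(x)\leq \max \{A_{e_{i}}^{+}(x),B_{e_{i}}^{+}(x)\}$ from the two ICSS assumptions, and take the lower bound directly from the stated hypothesis. Your write-up is in fact slightly more explicit than the paper's about why the upper bound holds (via $(\lambda _{e_{i}}\wedge \mu _{e_{i}})(x)\leq \lambda _{e_{i}}(x)\leq A_{e_{i}}^{+}(x)$), but the argument is the same.
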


\begin{proof}
Since $\left( \widetilde{F,}\text{ }I\right) $ and $\left( \widetilde{G,}%
\text{ }J\right) $ are internal cubic soft sets in $X$. So for $\left( 
\widetilde{F,}\text{ }I\right) $ we have $A_{e_{i}}^{-}(x)\leq \lambda
_{e_{i}}(x)\leq A_{e_{i}}^{+}(x)$ for all $e_{i}\in I$ and for all $x\in X.$
Also for $\left( \widetilde{G,}\text{ }J\right) $ we have $%
B_{e_{i}}^{-}(x)\leq \mu _{e_{i}}(x)\leq B_{e_{i}}^{+}(x)$ for all $e_{i}\in
J$ and for all $x\in X.$ So we have $(\lambda _{e_{i}}\wedge \mu
_{e_{i}})(x)\leq \max \{A_{e_{i}}^{+}(x),B_{e_{i}}^{+}(x)\}.$ Also given
that $\max \{A_{e_{i}}^{-}(x),B_{e_{i}}^{-}(x)\}\leq (\lambda _{e_{i}}\wedge
\mu _{e_{i}})(x)$ for all $e_{i}\in I\cap J$ and for all $x\in X.$ Now $%
\left( \widetilde{F,}\text{ }I\right) \cup _{R}\left( \widetilde{G,}\text{ }%
J\right) =\left( \widetilde{H,}\text{ }C\right) $, where $C=I\cup J$ and%
\begin{equation*}
H(e_{i})=\left\{ 
\begin{array}{c}
\begin{array}{c}
\widetilde{F}(e_{i})\text{ If }e_{i}\in I-J \\ 
\widetilde{G}(e_{i})\text{ If }e_{i}\in J-I%
\end{array}%
\text{ \ \ \ \ \ \ \ \ \ \ \ \ } \\ 
\widetilde{F}(e_{i})\vee _{R}\widetilde{G}(e_{i})\text{ If }e_{i}\in I\cap J,%
\end{array}%
\right.
\end{equation*}%
where $\widetilde{F}(e_{i})\vee _{R}\widetilde{G}(e_{i})$ is defined as 
\begin{equation*}
\widetilde{F}(e_{i})\vee _{R}\widetilde{G}(e_{i})=\widetilde{H}%
(e_{i})=\left\{ 
\begin{array}{c}
<x,\text{ }r\max \{A_{e_{i}}(x),B_{e_{i}}(p)\},(\lambda _{e_{i}}\wedge \mu
_{e_{i}})(x)>: \\ 
\text{ }x\in X\text{, }e_{i}\in I\cap J.%
\end{array}%
\right\}
\end{equation*}
Since $\left( \widetilde{F,}\text{ }I\right) $ and $\left( \widetilde{G,}%
\text{ }J\right) $ are ICSSs so from above given condition and definition of
an ICSS we can write $\max \{A_{e_{i}}^{-}(x),B_{e_{i}}^{-}(x)\}\leq
(\lambda _{e_{i}}\wedge \mu _{e_{i}})(x)\leq \max
\{A_{e_{i}}^{+}(x),B_{e_{i}}^{+}(x)\}$ for all $e_{i}\in I\cap J,$ and for
all $x\in X.$ If $e_{i}\in I-J$ or $e_{i}\in J-I$ then the result is
trivial. Thus $\left( \widetilde{F,}\text{ }I\right) $ $\cup _{R}\left( 
\widetilde{G,}\text{ }J\right) =\left( \widetilde{H,}\text{ }C\right) $ is
an ICSS if $\max \{A_{e_{i}}^{-}(x),B_{e_{i}}^{-}(x)\}\leq (\lambda
_{e_{i}}\wedge \mu _{e_{i}})(x)$ for all $e_{i}\in I\cup J$ and for all $%
x\in X.$
\end{proof}

\begin{theorem}
Let $\left( \widetilde{F,}\text{ }I\right) =\left\{ \widetilde{F}%
(e_{i})=\left\{ <x,\text{ }A_{e_{i}}(x),\text{ }\lambda _{e_{i}}(x)>:\text{ }%
x\in X\right\} \text{ }e_{i}\in I\right\} $ and $\left( \widetilde{G,}\text{ 
}J\right) =\left\{ \widetilde{G}(e_{i})=\left\{ <x,\text{ }B_{e_{i}}(x),%
\text{ }\mu _{e_{i}}(x)>:\text{ }x\in X\right\} \text{ }e_{i}\in J\right\} $
be ICSSs in $X$ satisfying the following inequality

$\min \{A_{e_{i}}^{+}(x),B_{e_{i}}^{+}(x)\}\geq (\lambda _{e_{i}}\vee \mu
_{e_{i}})(x)$ for all $e_{i}\in I\cap J$ and for all $x\in X.$ Then $\left( 
\widetilde{F,}\text{ }I\right) $ $\cap _{R}\left( \widetilde{G,}\text{ }%
J\right) $ is an ICSS.
\end{theorem}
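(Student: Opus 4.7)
The plan is to unwind the definition of R-intersection and then verify the ICSS inequality pointwise on the membership degrees. Write $(\widetilde{F,}\ I)\cap_R(\widetilde{G,}\ J)=(\widetilde{H,}\ C)$ with $C=I\cap J$, where for each $e_i\in I\cap J$
\[
\widetilde{H}(e_i)=\bigl\{\langle x,\ r\min\{A_{e_i}(x),B_{e_i}(x)\},\ (\lambda_{e_i}\vee\mu_{e_i})(x)\rangle:x\in X\bigr\}.
\]
Denote the IVF part of $\widetilde{H}(e_i)$ by $H_{e_i}=[H_{e_i}^{-},H_{e_i}^{+}]$ and its fuzzy part by $\nu_{e_i}=\lambda_{e_i}\vee\mu_{e_i}$. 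By the definition of $r\min$, $H_{e_i}^{-}(x)=\min\{A_{e_i}^{-}(x),B_{e_i}^{-}(x)\}$ and $H_{e_i}^{+}(x)=\min\{A_{e_i}^{+}(x),B_{e_i}^{+}(x)\}$. The goal is to prove $H_{e_i}^{-}(x)\leq\nu_{e_i}(x)\leq H_{e_i}^{+}(x)$ for all $e_i\in C$ and all $x\in X$, which is precisely the ICSS condition.

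For the upper inequality I would simply invoke the standing hypothesis
\[
\min\{A_{e_i}^{+}(x),B_{e_i}^{+}(x)\}\geq(\lambda_{e_i}\vee\mu_{e_i})(x),
\]
which is exactly $H_{e_i}^{+}(x)\geq\nu_{e_i}(x)$. No ICSS hypothesis on $(\widetilde{F,}\ I)$ or $(\widetilde{G,}\ J)$ is used here; the condition in the statement is tailored to close this gap, since it is precisely where the fuzzy value $\lambda_{e_i}\vee\mu_{e_i}$ can overshoot the upper endpoint $\min\{A_{e_i}^{+},B_{e_i}^{+}\}$ of the R-intersected interval.

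For the lower inequality I would exploit that both $(\widetilde{F,}\ I)$ and $(\widetilde{G,}\ J)$ are ICSSs: for all $x\in X$ and all $e_i$ in the relevant parameter sets,
\[
A_{e_i}^{-}(x)\leq\lambda_{e_i}(x)\leq A_{e_i}^{+}(x),\qquad B_{e_i}^{-}(x)\leq\mu_{e_i}(x)\leq B_{e_i}^{+}(x).
\]
Taking $\min$ of the two lower bounds gives $\min\{A_{e_i}^{-}(x),B_{e_i}^{-}(x)\}\leq\min\{\lambda_{e_i}(x),\mu_{e_i}(x)\}\leq\max\{\lambda_{e_i}(x),\mu_{e_i}(x)\}=\nu_{e_i}(x)$, which is $H_{e_i}^{-}(x)\leq\nu_{e_i}(x)$ as required.

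Putting the two bounds together yields the ICSS inequality for $(\widetilde{H,}\ C)$, so $(\widetilde{F,}\ I)\cap_R(\widetilde{G,}\ J)$ is an ICSS. There is no real obstacle: the hypothesis is deliberately chosen to cover the only gap (the upper bound), while the lower bound comes for free from the two ICSS assumptions via monotonicity of $\min$ and the trivial $\min\leq\max$. The only care needed is to keep the indices $e_i\in I\cap J$ throughout and to note that, unlike the P-union case in the previous theorem, the parameter set reduces to $I\cap J$ so no side cases $e_i\in I-J$ or $e_i\in J-I$ appear.
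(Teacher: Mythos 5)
Your proposal is correct and follows essentially the same route as the paper's own proof: the lower ICSS bound $\min\{A_{e_{i}}^{-}(x),B_{e_{i}}^{-}(x)\}\leq(\lambda_{e_{i}}\vee\mu_{e_{i}})(x)$ is extracted from the two ICSS hypotheses, and the upper bound is exactly the stated condition $\min\{A_{e_{i}}^{+}(x),B_{e_{i}}^{+}(x)\}\geq(\lambda_{e_{i}}\vee\mu_{e_{i}})(x)$. Your write-up is in fact slightly more explicit than the paper's in spelling out the chain $\min\{A_{e_{i}}^{-},B_{e_{i}}^{-}\}\leq\min\{\lambda_{e_{i}},\mu_{e_{i}}\}\leq\lambda_{e_{i}}\vee\mu_{e_{i}}$, but the argument is the same.
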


\begin{proof}
Let $\left( \widetilde{F,}\text{ }I\right) =\left\{ \widetilde{F}%
(e_{i})=\left\{ <x,\text{ }A_{e_{i}}(x),\text{ }\lambda _{e_{i}}(x)>:\text{ }%
x\in X\right\} \text{ }e_{i}\in I\right\} $ and $\left( \widetilde{G,}\text{ 
}J\right) =\left\{ \widetilde{G}(e_{i})=\left\{ <x,\text{ }B_{e_{i}}(x),%
\text{ }\mu _{e_{i}}(x)>:\text{ }x\in X\right\} \text{ }e_{i}\in J\right\} .$
Then, by definition of an ICSS. $A_{e_{i}}^{-}(x)\leq \lambda
_{e_{i}}(x)\leq A_{e_{i}}^{+}(x)$ for all $e_{i}\in I$ and for all $x\in X$
and $B_{e_{i}}^{-}(x)\leq \mu _{e_{i}}(x)\leq B_{e_{i}}^{+}(x)$ for all $%
e_{i}\in J$ and for all $x\in X,$ this implis that $\min
\{A_{e_{i}}^{-}(x),B_{e_{i}}^{-}(x)\}\leq (\lambda _{e_{i}}\vee \mu
_{e_{i}})(x).$ Also since $\left( \widetilde{F,}\text{ }I\right) \cap
_{R}\left( \widetilde{G,}\text{ }J\right) =\left( \widetilde{H,}\text{ }%
C\right) $ where $C=I\cap J$ and \ $H(e_{i})=\widetilde{F}(e_{i})\wedge _{R}%
\widetilde{G}(e_{i})$ If $e_{i}\in I\cap J$ then $\widetilde{F}(e_{i})\wedge
_{R}\widetilde{G}(e_{i})$ is defined as 
\begin{equation*}
\widetilde{F}(e_{i})\wedge _{R}\widetilde{G}(e_{i})=\left\{ \widetilde{H}%
(e_{i})=\left\{ 
\begin{array}{c}
<x,\text{ }r\min \{A_{e_{i}}(x),B_{e_{i}}(x)\},(\ \lambda _{e_{i}}\vee \mu
_{e_{i}})(x)>: \\ 
\text{ }x\in X,\text{ }e_{i}\in I\cap J%
\end{array}%
\right\} \right\}
\end{equation*}%
Given condition $\min \{A_{e_{i}}^{+}(x),B_{e_{i}}^{+}(x)\}\geq (\lambda
_{e_{i}}\vee \mu _{e_{i}})(x)$ for all $e_{i}\in I\cap J$ and for all $x\in
X.$ Thus from given condition and definition of ICSSs $%
\{A_{e_{i}}^{-}(x),B_{e_{i}}^{-}(x)\}\leq (\lambda _{e_{i}}\vee \mu
_{e_{i}})(x)\leq \min \{A_{e_{i}}^{+}(x),B_{e_{i}}^{+}(x)\}.$ Hence $\left( 
\widetilde{F,}\text{ }I\right) $ $\cap _{R}\left( \widetilde{G,}\text{ }%
J\right) $ is an ICSS.
\end{proof}

\begin{definition}
Given two cubic soft sets

$\left( \widetilde{F,}\text{ }I\right) =\left\{ \widetilde{F}(e_{i})=\left\{
<x,\text{ }A_{e_{i}}(x),\text{ }\lambda _{e_{i}}(x)>:\text{ }x\in X\right\} 
\text{ }e_{i}\in I,\right\} $

and

$\left( \widetilde{G,}\text{ }J\right) =\left\{ \widetilde{G}(e_{i})=\left\{
<x,\text{ }B_{e_{i}}(x),\text{ }\mu _{e_{i}}(x)>:\text{ }x\in X\right\} 
\text{ }e_{i}\in J\right\} ,$ if we interchange $\lambda $ and $\mu $, then
the new cubic soft sets are denoted and defined as

$\left( \widetilde{F,}\text{ }I\right) ^{\ast }=\left\{ \widetilde{F}%
(e_{i})=\left\{ <x,\text{ }A_{e_{i}}(x),\text{ }\mu _{e_{i}}(x)>:\text{ }%
x\in X\right\} \text{ }e_{i}\in I,\right\} $ and

$\left( \widetilde{G,}\text{ }J\right) ^{\ast }=\left\{ \widetilde{G}%
(e_{i})=\left\{ <x,\text{ }B_{e_{i}}(x),\text{ }\lambda _{e_{i}}(x)>:\text{ }%
x\in X\right\} \text{ }e_{i}\in J\right\} $ respectively.
\end{definition}

\begin{theorem}
\label{P1}For two ECSSs $\left( \widetilde{F,}\text{ }I\right) =\left\{ 
\widetilde{F}(e_{i})=\left\{ <x,\text{ }A_{e_{i}}(x),\text{ }\lambda
_{e_{i}}(x)>:\text{ }x\in X\right\} \text{ }e_{i}\in I,\right\} $ and $%
\left( \widetilde{G,}\text{ }J\right) =\left\{ \widetilde{G}(e_{i})=\left\{
<x,\text{ }B_{e_{i}}(x),\text{ }\mu _{e_{i}}(x)>:\text{ }x\in X\right\} 
\text{ }e_{i}\in J\right\} $ in $X,$ if $\left( \widetilde{F,}\text{ }%
I\right) ^{\ast }$ and $\left( \widetilde{G,}\text{ }J\right) ^{\ast }$ are
ICSSs in $X$ then $\left( \widetilde{F,}\text{ }I\right) \cup _{P}\left( 
\widetilde{G,}\text{ }J\right) $ is an ICSS in $X$.
\end{theorem}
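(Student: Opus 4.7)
The plan is to verify the defining inequality of an ICSS, namely $C_{e_i}^{-}(x) \le (\lambda_{e_i}\vee\mu_{e_i})(x) \le C_{e_i}^{+}(x)$, for every parameter $e_i$ and every $x\in X$, where $(\widetilde{H},C) = (\widetilde{F},I)\cup_P (\widetilde{G},J)$ with $C = I\cup J$. The starred-ICSS hypotheses unpack directly into the cross inequalities $A_{e_i}^{-}(x)\le \mu_{e_i}(x)\le A_{e_i}^{+}(x)$ and $B_{e_i}^{-}(x)\le \lambda_{e_i}(x)\le B_{e_i}^{+}(x)$; these are precisely the lever the proof will pull, since they bound each fuzzy part by the interval coming from the \emph{other} IVF set.

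Next, I would unwind the piecewise definition of $\widetilde{H}$. Observe that for $(\widetilde{F},I)^{\ast}$ to be an ICSS requires $\mu_{e_i}$ to be defined for every $e_i\in I$, and symmetrically $(\widetilde{G},J)^{\ast}$ being an ICSS forces $\lambda_{e_i}$ to be defined for every $e_i\in J$; consistency of the two hypotheses therefore concentrates the work on $e_i\in I\cap J$, with the boundary cases $e_i\in I\setminus J$ and $e_i\in J\setminus I$ being either vacuous or immediate from the corresponding starred condition. On $I\cap J$, $\widetilde{H}(e_i) = \widetilde{F}(e_i)\vee_P \widetilde{G}(e_i) = \{\langle x,\, r\max\{A_{e_i}(x),B_{e_i}(x)\},\, (\lambda_{e_i}\vee\mu_{e_i})(x)\rangle : x\in X\}$, so the IVF endpoints of $\widetilde{H}(e_i)$ at $x$ are $\max\{A_{e_i}^{-}(x),B_{e_i}^{-}(x)\}$ and $\max\{A_{e_i}^{+}(x),B_{e_i}^{+}(x)\}$.

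The lower bound then drops out immediately: the starred-ICSS inequalities give $A_{e_i}^{-}(x)\le \mu_{e_i}(x)\le (\lambda_{e_i}\vee\mu_{e_i})(x)$ and $B_{e_i}^{-}(x)\le \lambda_{e_i}(x)\le (\lambda_{e_i}\vee\mu_{e_i})(x)$, which together yield $\max\{A_{e_i}^{-}(x),B_{e_i}^{-}(x)\}\le (\lambda_{e_i}\vee\mu_{e_i})(x)$. For the upper bound one uses the same inequalities the other way: $\mu_{e_i}(x)\le A_{e_i}^{+}(x)$ and $\lambda_{e_i}(x)\le B_{e_i}^{+}(x)$ force $(\lambda_{e_i}\vee\mu_{e_i})(x) \le \max\{A_{e_i}^{+}(x),B_{e_i}^{+}(x)\}$. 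Chaining these two estimates is exactly the ICSS condition for $(\widetilde{H},C)$.

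The main obstacle is conceptual rather than computational: although each of $(\widetilde{F},I)$ and $(\widetilde{G},J)$ is external, so that $\lambda_{e_i}$ lies outside $(A_{e_i}^{-}(x),A_{e_i}^{+}(x))$, the starred hypothesis forces $\lambda_{e_i}$ into the \emph{other} interval $[B_{e_i}^{-}(x),B_{e_i}^{+}(x)]$ and $\mu_{e_i}$ into $[A_{e_i}^{-}(x),A_{e_i}^{+}(x)]$; it is exactly this crossover that allows the pointwise maximum of the two fuzzy parts to be sandwiched by the pointwise maximum of the corresponding endpoints. Notice that the ECSS hypothesis is not used in the derivation itself; it merely prevents the conclusion from being vacuous and explains why such a result is worth stating.
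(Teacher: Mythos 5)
Your argument for the main case $e_i\in I\cap J$ is correct, and it is genuinely more direct than the paper's. The paper first combines the ECSS hypotheses with the starred ICSS hypotheses to enumerate the possible positions of $\lambda_{e_i}(x)$ and $\mu_{e_i}(x)$ relative to the two intervals (its subcases (i)(a)--(iv)(b)), and then works through Cases 1--3 of the piecewise definition of $\widetilde{H}$. You instead observe that the four inequalities $A_{e_i}^{-}(x)\le\mu_{e_i}(x)\le A_{e_i}^{+}(x)$ and $B_{e_i}^{-}(x)\le\lambda_{e_i}(x)\le B_{e_i}^{+}(x)$ already chain through the maximum to give $\max\{A_{e_i}^{-}(x),B_{e_i}^{-}(x)\}\le(\lambda_{e_i}\vee\mu_{e_i})(x)\le\max\{A_{e_i}^{+}(x),B_{e_i}^{+}(x)\}$ with no case split at all. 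Your remark that the ECSS hypothesis is never used in this derivation is accurate and worth making explicit: on $I\cap J$ the conclusion follows from the starred ICSS conditions alone, which the paper's longer case analysis obscures.

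The one soft spot is your dismissal of the boundary cases. For $e_i\in I\setminus J$ we have $\widetilde{H}(e_i)=\widetilde{F}(e_i)=\langle A_{e_i},\lambda_{e_i}\rangle$, and the "corresponding starred condition" bounds $\mu_{e_i}$ by $A_{e_i}$, not $\lambda_{e_i}$ by $A_{e_i}$; combined with externality of $(\widetilde{F},I)$ this gives only $\lambda_{e_i}(x)\le A_{e_i}^{-}(x)$ or $\lambda_{e_i}(x)\ge A_{e_i}^{+}(x)$, which does not yield internality (take $A_{e_i}(x)=[0.3,0.6]$, $\lambda_{e_i}(x)=0.9$, $\mu_{e_i}(x)=0.4$). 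So "immediate from the corresponding starred condition" is not right; only your vacuity reading works, namely that well-definedness of $(\widetilde{F},I)^{\ast}$ and $(\widetilde{G},J)^{\ast}$ forces $\mu$ and $\lambda$ to live on both index sets, effectively reducing to $I=J$. You should commit to that reading rather than offer the alternative. For what it is worth, the paper's own Cases 1 and 2 are no better here: it asserts that (i)(a) and (ii)(a) together give $\lambda_{e_i}(x)=A_{e_i}^{-}(x)$ and $\lambda_{e_i}(x)=A_{e_i}^{+}(x)$ simultaneously, which is not a valid deduction from a disjunction of cases. Your proof, with the boundary cases handled by vacuity alone, is the cleaner and more defensible of the two.
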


\begin{proof}
Since $\left( \widetilde{F,}\text{ }I\right) =\left\{ \widetilde{F}%
(e_{i})=\left\{ <x,\text{ }A_{e_{i}}(x),\text{ }\lambda _{e_{i}}(x)>:\text{ }%
x\in X\right\} \text{ }e_{i}\in I\right\} $ and $\left( \widetilde{G,}\text{ 
}J\right) =\left\{ \widetilde{G}(e_{i})=\left\{ <x,\text{ }B_{e_{i}}(x),%
\text{ }\mu _{e_{i}}(x)>:\text{ }x\in X\right\} \text{ }e_{i}\in J\right\} $
are ECSSs. Then for $\left( \widetilde{F,}\text{ }I\right) ,$ we have $%
\lambda _{e_{i}}(x)\notin (A_{e_{i}}^{-}(x),A_{e_{i}}^{+}(x))$ for all $%
e_{i}\in I$ and for all $x\in X.$ And for $\left( \widetilde{G,}\text{ }%
J\right) ,$ we have$\ \mu _{e_{i}}(x)\notin
(B_{e_{i}}^{-}(x),B_{e_{i}}^{+}(x))$ for all $e_{i}\in J$ and for all $x\in
X.$ Also given that

$\left( \widetilde{F,}\text{ }I\right) ^{\ast }=\left\{ \widetilde{F}%
(e_{i})=\left\{ <x,\text{ }A_{e_{i}\text{ }}(x),\text{ }\mu _{e_{i}\text{ }%
}(x)>:\text{ }x\in X\right\} \text{ }e_{i}\in I,\right\} $ and

$\left( \widetilde{G,}\text{ }J\right) ^{\ast }=\left\{ \widetilde{G}%
(e_{i})=\left\{ <x,\text{ }B_{e_{i}\text{ }}(x),\text{ }\lambda _{e_{i}\text{
}}(x)>:\text{ }x\in X\right\} \text{ }e_{i}\in J\right\} $ are ICSSs so this
implies $A_{e_{i}}^{-}(x)\leq \mu _{e_{i}}(x)\leq A_{e_{i}}^{+}(x)$ for all $%
e_{i}\in I$ and for all $x\in X$ and $B_{e_{i}}^{-}(x)\leq \lambda
_{e_{i}}(x)\leq B_{e_{i}}^{+}(x)$ for all $e_{i}\in J$ and for all $x\in X.$
Since $\left( \widetilde{F,}\text{ }I\right) $ and $\left( \widetilde{G,}%
\text{ }J\right) $ are ECSSs, and $\left( \widetilde{F,}\text{ }I\right)
^{\ast }$ and $\left( \widetilde{G,}\text{ }J\right) ^{\ast }$ are ICSSs.
Thus by definition of ECSSs and ICSSs, all the possibilities are as under

(i)(a) $\lambda _{e_{i}\text{ }}(x)\leq A_{e_{i}}^{-}(x)\leq \mu
_{e_{i}}(x)\leq A_{e_{i}}^{+}(x)$ for all $e_{i}\in I$ and for all $x\in X$.

(i)(b) $\mu _{e_{i}}(x)\leq B_{e_{i}}^{-}(x)\leq \lambda _{e_{i}\text{ }%
}(x)\leq B_{e_{i}}^{+}(x)$ for all $e_{i}\in J$ and for all $x\in X.$

(ii)(a) $A_{e_{i}}^{-}(x)\leq \mu _{e_{i}}(x)\leq A_{e_{i}}^{+}(x)\leq
\lambda _{e_{i}\text{ }}(x)$ for all $e_{i}\in I$ and for all $x\in X$.

(ii)(b) $B_{e_{i}}^{-}(x)\leq \lambda _{e_{i}\text{ }}(x)\leq
B_{e_{i}}^{+}(x)\leq \mu _{e_{i}}(x)$ for all $e_{i}\in J$ and for all $x\in
X.$

(iii)(a) $\lambda _{e_{i}\text{ }}(x)\leq A_{e_{i}}^{-}(x)\leq \mu
_{e_{i}}(x)\leq A_{e_{i}}^{+}(x)$ for all $e_{i}\in I$ and for all $x\in X$.

(iii)(b) $B_{e_{i}}^{-}(x)\leq \lambda _{e_{i}\text{ }}(x)\leq
B_{e_{i}}^{+}(x)\leq \mu _{e_{i}}(x)$ for all $e_{i}\in J$ and for all $x\in
X.$

(iv)(a) $A_{e_{i}}^{-}(x)\leq \mu _{e_{i}}(x)\leq A_{e_{i}}^{+}(x)$ $\leq
\lambda _{e_{i}\text{ }}(x)$ for all $e_{i}\in I$ and for all $x\in X$.

(iv)(b) $\mu _{e_{i}}(x)\leq B_{e_{i}}^{-}(x)\leq \lambda _{e_{i}\text{ }%
}(x)\leq B_{e_{i}}^{+}(x)$ for all $e_{i}\in J$ and for all $x\in X.$ Since
P-union of $\left( \widetilde{F,}\text{ }I\right) $ and $\left( \widetilde{G,%
}\text{ }J\right) $ is denoted and defined as$\left( \widetilde{F,}\text{ }%
I\right) \cup _{P}\left( \widetilde{G,}\text{ }J\right) =\left( \widetilde{H,%
}\text{ }C\right) $ where $C=I\cup J$ and%
\begin{equation*}
\widetilde{H}(e_{i})=\left\{ 
\begin{array}{c}
\begin{array}{c}
\widetilde{F}(e_{i})\text{ If }e_{i}\in I-J \\ 
\widetilde{G}(e_{i})\text{ If }e_{i}\in J-I%
\end{array}%
\text{ \ \ \ \ \ \ \ \ \ \ \ \ } \\ 
\widetilde{F}(e_{i})\vee _{P}\widetilde{G}(e_{i})\text{ If }e_{i}\in I\cap J,%
\end{array}%
\right.
\end{equation*}%
where $\widetilde{F}(e_{i})\vee _{P}\widetilde{G}(e_{i})$ is defined as%
\begin{equation*}
\widetilde{F}(e_{i})\vee _{P}\widetilde{G}(e_{i})=\widetilde{H}%
(e_{i})=\left\{ 
\begin{array}{c}
<x,\text{ }r\max \{A_{e_{i}\text{ }}(x),B_{e_{i}\text{ }}(x)\},(\text{ }%
\lambda _{e_{i}\text{ }}\vee \mu _{e_{i}\text{ }})(x)>: \\ 
\text{ }x\in X,\text{ }e_{i}\in I\cap J.%
\end{array}%
\right\}
\end{equation*}%
Case 1: If $\widetilde{H}(e_{i})=\widetilde{F}(e_{i})$ that is if $e_{i}\in
I-J,$ then from (i)(a) and (ii)(a), we have $\lambda
_{e_{i}}(x)=A_{e_{i}}^{-}(x)$ and $\lambda _{e_{i}}(x)=A_{e_{i}}^{+}(x)$ for
all $e_{i}\in I$ and for all $x\in X.$ Thus $A_{e_{i}}^{-}(x)\leq \lambda
_{e_{i}}(x)\leq A_{e_{i}}^{+}(x)$ for all $e_{i}\in I-J$ and for all $x\in
X. $

Case 2: If $\widetilde{H}(e_{i})=\widetilde{G}(e_{i})$ that is if $e_{i}\in
J-I,$ then from (i)(b) and (ii)(b), we have $\mu
_{e_{i}}(x)=B_{e_{i}}^{-}(x) $ and $\mu _{e_{i}}(x)=B_{e_{i}}^{+}(x)$ for
all $e_{i}\in J$ and for all $x\in X.$ Thus $B_{e_{i}}^{-}(x)\leq \mu
_{e_{i}}(x)\leq B_{e_{i}}^{+}(x)$ for all $e_{i}\in J-I$ and for all $x\in
X. $

Case 3: If $\widetilde{H}(e_{i})=\widetilde{F}(e_{i})\vee _{P}\widetilde{G}%
(e_{i})$ that is if $e_{i}\in I\cap J,$ then from (i)(a) and (i)(b), we have 
$A_{e_{i}}^{-}(x)\leq \lambda _{e_{i}}(x)\leq A_{e_{i}}^{+}(x)$ for all $%
e_{i}\in I$ and for all $x\in X$ and $B_{e_{i}}^{-}(x)\leq \mu
_{e_{i}}(x)\leq B_{e_{i}}^{+}(x)$ for all $e_{i}\in J$ and for all $x\in X.$

Hence if $e_{i}\in I\cap J,$ then $\max
\{A_{e_{i}}^{-}(x),B_{e_{i}}^{-}(x)\}\leq (\lambda _{e_{i}}\vee \mu
_{e_{i}})(x)\leq \max \{A_{e_{i}}^{+}(x),B_{e_{i}}^{+}(x)\}.$ Thus, in all
the three cases $\left( \widetilde{F,}\text{ }I\right) \cup _{P}\left( 
\widetilde{G,}\text{ }J\right) $ is an ICSS in $X.$
\end{proof}

\begin{example}
Let$\ \left( \widetilde{F,}\text{ }I\right) =\left\{ \widetilde{F}%
(e_{i})=\left\{ <x,\text{ }A_{e_{i}}(x),\text{ }\lambda _{e_{i}}(x)>:\text{ }%
x\in X\right\} \text{ }e_{i}\in I\right\} $ be cubic soft set defined by 
\begin{equation*}
\begin{tabular}{|l|l|l|l|}
\hline
$p$ & $%
\begin{array}{c}
\widetilde{F}(e_{1})=\mathcal{A}_{1}= \\ 
<A_{e_{1}}(p),\text{ \ \ \ \ \ }\lambda _{e_{1}}(p)>%
\end{array}%
$ & $%
\begin{array}{c}
\widetilde{F}(e_{2})=\mathcal{A}_{2}= \\ 
<A_{e_{2}}(p),\text{ \ \ \ \ \ }\lambda _{e_{2}}(p)>%
\end{array}%
$ & $%
\begin{array}{c}
\widetilde{F}(e_{3})=\mathcal{A}_{3}= \\ 
<A_{e_{3}}(p),\text{ \ \ \ \ \ }\lambda _{e_{3}}(p)>%
\end{array}%
$ \\ \hline
$p_{1}$ & $\ \left[ 0.2,0.4\right] ,$ \ \ \ \ \ $0.5$ & $\ \left[ 0.7,0.9%
\right] ,\text{ \ \ \ \ \ \ }0.6$ & $\ \left[ 0.4,0.6\right] ,\text{ \ \ \ \
\ \ }0.3$ \\ \hline
$p_{2}$ & $\ \left[ 0.3,0.5\right] ,\text{ \ \ \ \ \ }0.6$ & $\ \left[
0.2,0.4\right] ,\text{ \ \ \ \ \ \ }0.5$ & $\ \left[ 0.5,0.7\right] ,\text{
\ \ \ \ \ \ }0.4$ \\ \hline
$p_{3}$ & $\ \left[ 0.4,0.6\right] ,\text{ \ \ \ \ \ }0.7$ & $\ \left[
0.3,0.5\right] ,\text{ \ \ \ \ \ \ }0.6$ & $\ \left[ 0.4,0.6\right] ,\text{
\ \ \ \ \ \ }0.3$ \\ \hline
\end{tabular}%
\end{equation*}%
and $\left( \widetilde{G,}\text{ }J\right) =\left\{ \widetilde{G}(e_{i})=%
\mathcal{B}_{i}=\left\{ <p,\text{ }B_{e_{i}}(p),\ \mu _{e_{i}}(p)>:\text{ }%
p\in X\right\} \text{ }e_{i}\in J,i=1,2,3\right\} $ cubic soft set defined by%
\begin{equation*}
\begin{tabular}{|l|l|l|l|}
\hline
$p$ & $%
\begin{array}{c}
\widetilde{G}(e_{1})=\mathcal{B}_{1}= \\ 
<B_{e_{1}}(p),\text{ \ \ \ \ }\ \mu _{e_{1}}(p)>%
\end{array}%
$ & $%
\begin{array}{c}
\widetilde{G}(e_{2})=\mathcal{B}_{2}= \\ 
<B_{e_{2}}(p),\text{\ \ \ \ \ }\ \mu _{e_{2}}(p)>%
\end{array}%
$ & $%
\begin{array}{c}
\widetilde{G}(e_{3})=\mathcal{B}_{3}= \\ 
<B_{e_{3}}(p),\text{ \ \ \ \ }\ \mu _{e_{3}}(p)>%
\end{array}%
$ \\ \hline
$p_{1}$ & $\ \left[ 0.4,0.6\right] ,\text{ \ \ \ \ \ }0.3$ & $\ \left[
0.5,0.7\right] ,\text{ \ \ \ \ \ }0.8$ & $\ \left[ 0.2,0.4\right] ,\text{ \
\ \ \ \ \ }0.5$ \\ \hline
$p_{2}$ & $\ \left[ 0.5,0.7\right] ,\text{ \ \ \ \ \ }0.4$ & $\ \left[
0.4,0.6\right] ,\text{ \ \ \ \ \ }0.3$ & $\ \left[ 0.3,0.5\right] ,\text{ \
\ \ \ \ \ }0.6$ \\ \hline
$p_{3}$ & $\ \left[ 0.6,0.8\right] ,\text{ \ \ \ \ \ }0.5$ & $\ \left[
0.5,0.7\right] ,\text{ \ \ \ \ \ }0.4$ & $\ \left[ 0.2,0.4\right] ,\text{ \
\ \ \ \ \ }0.5$ \\ \hline
\end{tabular}%
\end{equation*}%
Above $\left( \widetilde{F,}\text{ }I\right) $ and $\left( \widetilde{G,}%
\text{ }J\right) $ are ECSSs and constructed so that their corresponding $%
\left( \widetilde{F,}\text{ }I\right) ^{\ast }$ and $\left( \widetilde{G,}%
\text{ }J\right) ^{\ast }$are ICSSs. Now consider $\left( \widetilde{F,}%
\text{ }I\right) $ $\cup _{P}\left( \widetilde{G,}\text{ }J\right) $ is
defined by%
\begin{eqnarray*}
&&%
\begin{tabular}{|l|l|}
\hline
$p$ & $%
\begin{array}{c}
\widetilde{F}{\small (e}_{1}{\small )\vee }_{P}\widetilde{G}{\small (e}_{1}%
{\small )=} \\ 
<{\small r}\max \left\{ A_{e_{1}}(p){\small ,}B_{e_{1}}(p)\right\} ,{\small %
(\lambda }_{e_{1}}{\small \vee \mu }_{e_{1}}{\small )p>}%
\end{array}%
$ \\ \hline
$p_{1}$ & ${\small \ \ \ \ \ \ \ \ \ \ \ \ \ \ \ \ \ \ \ }\left[ {\small %
0.4,0.6}\right] ,\text{ \ \ \ \ \ \ }{\small 0.5}$ \\ \hline
$p_{2}$ & ${\small \ \ \ \ \ \ \ \ \ \ \ \ \ \ \ \ \ \ \ }\left[ {\small %
0.5,0.7}\right] ,\text{ \ \ \ \ \ \ }{\small 0.6}$ \\ \hline
$p_{3}$ & ${\small \ \ \ \ \ \ \ \ \ \ \ \ \ \ \ \ \ \ \ }\left[ {\small %
0.6,0.8}\right] ,\text{ \ \ \ \ \ \ }{\small 0.7}$ \\ \hline
\end{tabular}
\\
&&%
\begin{tabular}{|l|l|}
\hline
$p$ & $%
\begin{array}{c}
\widetilde{F}{\small (e}_{2}{\small )\vee }_{P}\widetilde{G}{\small (e}_{2}%
{\small )=} \\ 
<{\small r}\max \left\{ A_{e_{2}}(p){\small ,}B_{e_{2}}(p)\right\} ,{\small %
(\lambda }_{e_{2}}{\small \vee \mu }_{e_{2}}{\small )p>}%
\end{array}%
$ \\ \hline
$p_{1}$ & ${\small \ \ \ \ \ \ \ \ \ \ \ \ \ \ \ \ \ \ }\left[ {\small %
0.7,0.9}\right] ,\text{ \ \ \ \ \ \ }{\small 0.8}$ \\ \hline
$p_{2}$ & ${\small \ \ \ \ \ \ \ \ \ \ \ \ \ \ \ \ \ \ }\left[ {\small %
0.4,0.6}\right] ,\text{ \ \ \ \ \ \ }{\small 0.5}$ \\ \hline
$p_{3}$ & ${\small \ \ \ \ \ \ \ \ \ \ \ \ \ \ \ \ \ \ }\left[ {\small %
0.5,0.7}\right] ,\text{ \ \ \ \ \ \ }{\small 0.6}$ \\ \hline
\end{tabular}
\\
&&%
\begin{tabular}{|l|l|}
\hline
$p$ & $%
\begin{array}{c}
\widetilde{F}{\small (e}_{3}{\small )\vee }_{P}\widetilde{G}{\small (e}_{3}%
{\small )=} \\ 
<{\small r}\max \left\{ A_{e_{3}}(p){\small ,}B_{e_{3}}(p)\right\} \text{ }%
{\small (\lambda }_{e_{3}}{\small \vee \mu }_{e_{3}}{\small )p>}%
\end{array}%
$ \\ \hline
$p_{1}$ & $\ \ \ \ \ \ \ \ \ \ \ \ \ \ \ \ \left[ {\small 0.4,0.6}\right] ,%
\text{ \ \ \ \ \ }{\small 0.5}$ \\ \hline
$p_{2}$ & $\ \ \ \ \ \ \ \ \ \ \ \ \ \ \ \ \left[ {\small 0.5,0.7}\right] ,%
\text{ \ \ \ \ \ }{\small 0.6}$ \\ \hline
$p_{3}$ & $\ \ \ \ \ \ \ \ \ \ \ \ \ \ \ \ \left[ {\small 0.4,0.6}\right] ,%
\text{ \ \ \ \ \ }{\small 0.5}$ \\ \hline
\end{tabular}%
\end{eqnarray*}%
Which is an ICSS.
\end{example}

\begin{theorem}
For two ECSSs $\left( \widetilde{F,}\text{ }I\right) =\left\{ \widetilde{F}%
(e_{i})=\left\{ <x,\text{ }A_{e_{i}}(x),\text{ }\lambda _{e_{i}}(x)>:\text{ }%
x\in X\right\} \text{ }e_{i}\in I,\right\} $ and $\left( \widetilde{G,}\text{
}J\right) =\left\{ \widetilde{G}(e_{i})=\left\{ <x,\text{ }B_{e_{i}}(x),%
\text{ }\mu _{e_{i}}(x)>:\text{ }x\in X\right\} e_{i}\in J\right\} $ in $X,$
if

$\left( \widetilde{F,}\text{ }I\right) ^{\ast }=\left\{ \widetilde{F}%
(e_{i})=\left\{ <x,\text{ }A_{i\text{ }}(x),\text{ }\mu _{e_{i}\text{ }}(x)>:%
\text{ }x\in X\right\} \text{ }e_{i}\in I,\right\} $ and

$\left( \widetilde{G,}\text{ }J\right) ^{\ast }=\left\{ \widetilde{G}%
(e_{i})=\left\{ <x,\text{ }B_{i\text{ }}(x),\text{ }\lambda _{e_{i}\text{ }%
}(x)>:\text{ }x\in X\right\} e_{i}\in J\right\} $ are ICSSs in $X,$ then $%
\left( \widetilde{F,}\text{ }I\right) $ $\cap _{P}\left( \widetilde{G,}\text{
}J\right) $ is an ICSS in $X$.
\end{theorem}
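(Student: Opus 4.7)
The plan is to follow the blueprint of Theorem~\ref{P1}, but to exploit the fact that P-intersection is indexed only by $I\cap J$, so the three-way case split on $I-J$, $J-I$, $I\cap J$ that appeared there collapses to the single case $e_i\in I\cap J$.

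First I would rewrite the four hypotheses as pointwise inequalities. The two ECSS assumptions give $\lambda_{e_i}(x)\notin(A_{e_i}^-(x),A_{e_i}^+(x))$ for every $e_i\in I,\ x\in X$ and $\mu_{e_i}(x)\notin(B_{e_i}^-(x),B_{e_i}^+(x))$ for every $e_i\in J,\ x\in X$. By the construction of the starred cubic soft sets (where $\lambda$ and $\mu$ have been interchanged), the hypothesis that $(\widetilde{F},I)^{\ast}$ is an ICSS translates to $A_{e_i}^-(x)\le\mu_{e_i}(x)\le A_{e_i}^+(x)$, and the hypothesis that $(\widetilde{G},J)^{\ast}$ is an ICSS translates to $B_{e_i}^-(x)\le\lambda_{e_i}(x)\le B_{e_i}^+(x)$.

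Next I would unfold the P-intersection: for each $e_i\in I\cap J$, $\widetilde{H}(e_i)$ has IVF part $r\min\{A_{e_i}(x),B_{e_i}(x)\}$ and fuzzy part $(\lambda_{e_i}\wedge\mu_{e_i})(x)$. To conclude that $(\widetilde{H},C)$ is an ICSS I must verify
\[
\min\{A_{e_i}^-(x),B_{e_i}^-(x)\}\le(\lambda_{e_i}\wedge\mu_{e_i})(x)\le\min\{A_{e_i}^+(x),B_{e_i}^+(x)\}.
\]
The left-hand inequality drops out by taking minima in the pair $A_{e_i}^-\le\mu_{e_i}$ and $B_{e_i}^-\le\lambda_{e_i}$; the right-hand one drops out by taking minima in $\mu_{e_i}\le A_{e_i}^+$ and $\lambda_{e_i}\le B_{e_i}^+$.

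The point to flag is that, unlike Theorem~\ref{P1}, no four-branch case analysis on the relative positions of $\lambda_{e_i},\mu_{e_i},A_{e_i}^{\pm},B_{e_i}^{\pm}$ is required, because $r\min$ and $\wedge$ respect the starred-ICSS bounds monotonically. The ECSS hypothesis itself does not enter the inequality chain; it is only needed to justify the setup. I would therefore present the argument as a single uniform chain of inequalities rather than imitating the case-split format of the P-union proof, and signal to the reader that the main obstacle is simply a clean bookkeeping of which of the four hypotheses supplies which bound, with the actual monotonicity step being essentially a one-liner.
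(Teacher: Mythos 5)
Your argument is correct, and it is worth noting that it is genuinely leaner than what the paper does. The paper's entire proof of this theorem is the sentence ``By similar way to Theorem~\ref{P1}, we can obtain the result,'' which points the reader back to the P-union proof with its four-branch enumeration (i)(a)--(iv)(b) of the relative positions of $\lambda_{e_{i}},\mu_{e_{i}},A_{e_{i}}^{\pm},B_{e_{i}}^{\pm}$ and its three cases $e_{i}\in I-J$, $e_{i}\in J-I$, $e_{i}\in I\cap J$. You correctly observe that for the P-intersection the index set is only $I\cap J$, so the outer case split disappears, and that the inner case split is also unnecessary: since $(r\min\{A_{e_{i}},B_{e_{i}}\})^{\mp}(x)=\min\{A_{e_{i}}^{\mp}(x),B_{e_{i}}^{\mp}(x)\}$, the required ICSS condition for $\widetilde{H}(e_{i})$ is exactly $\min\{A_{e_{i}}^{-}(x),B_{e_{i}}^{-}(x)\}\le(\lambda_{e_{i}}\wedge\mu_{e_{i}})(x)\le\min\{A_{e_{i}}^{+}(x),B_{e_{i}}^{+}(x)\}$, and both bounds follow in one line from $A_{e_{i}}^{-}(x)\le\mu_{e_{i}}(x)\le A_{e_{i}}^{+}(x)$ and $B_{e_{i}}^{-}(x)\le\lambda_{e_{i}}(x)\le B_{e_{i}}^{+}(x)$ (the starred ICSS hypotheses) by monotonicity of $\min$. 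Your further remark that the ECSS hypotheses on $(\widetilde{F},I)$ and $(\widetilde{G},J)$ never enter the inequality chain is accurate and exposes that those hypotheses are logically redundant in this theorem; the paper's case-analytic template obscures this. What your route buys is brevity and the identification of the minimal hypotheses actually used; what the paper's route buys is uniformity of presentation with Theorem~\ref{P1} (and, in fairness, the same direct monotonicity argument would also have shortened Theorem~\ref{P1} itself, replacing $\min$ by $\max$ and $\wedge$ by $\vee$).
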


\begin{proof}
By similar way to Theorem \ref{P1}, we can obtain the result.
\end{proof}

\begin{theorem}
Let $\left( \widetilde{F,}\text{ }I\right) =\left\{ \widetilde{F}%
(e_{i})=\left\{ <x,\text{ }A_{e_{i}}(x),\text{ }\lambda _{e_{i}}(x)>:\text{ }%
x\in X\right\} \text{ }e_{i}\in I\right\} $ and $\left( \widetilde{G,}\text{ 
}J\right) =\left\{ \widetilde{G}(e_{i})=\left\{ <x,\text{ }B_{e_{i}}(x),%
\text{ }\mu _{e_{i}}(x)>:\text{ }x\in X\right\} \text{ }e_{i}\in J\right\} $
be ECSSs in $X$ such that

$\left( \widetilde{F,}\text{ }I\right) ^{\ast }=\left\{ \widetilde{F}%
(e_{i})=\left\{ <x,\text{ }A_{e_{i}}(x),\text{ }\mu _{e_{i}}(x)>:\text{ }%
x\in X\right\} \text{ }e_{i}\in I\right\} $ and

$\left( \widetilde{G,}\text{ }J\right) ^{\ast }=\left\{ \widetilde{G}%
(e_{i})=\left\{ <x,\text{ }B_{e_{i}}(x),\text{ }\lambda _{e_{i}}(x)>:\text{ }%
x\in X\right\} \text{ }e_{i}\in J\right\} $ be ECSSs. Then, the P-union of $%
\left( \widetilde{F,}\text{ }I\right) $ and $\left( \widetilde{G,}\text{ }%
J\right) $ is an ECSS in $X$.
\end{theorem}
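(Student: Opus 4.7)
The plan is to mirror the case analysis used in Theorem~\ref{P1}, but to exploit the ECSS-ness of the starred cubic soft sets to force $\lambda_{e_i}$ and $\mu_{e_i}$ to lie on the same side of the IVF interval. Write $(\widetilde F, I)\cup_P(\widetilde G, J)=(\widetilde H, C)$ with $C=I\cup J$. For $e_i\in I-J$ the piece $\widetilde H(e_i)=\widetilde F(e_i)$ is already external by hypothesis on $(\widetilde F, I)$, and for $e_i\in J-I$ the same holds using $(\widetilde G, J)$; these cases are immediate. Thus the real content is the index range $e_i\in I\cap J$, where
\[
\widetilde H(e_i)=\bigl\{\langle x,\; r\max\{A_{e_i}(x),B_{e_i}(x)\},\; (\lambda_{e_i}\vee\mu_{e_i})(x)\rangle : x\in X\bigr\}.
\]
Set $C^{-}(x)=\max\{A_{e_i}^{-}(x),B_{e_i}^{-}(x)\}$, $C^{+}(x)=\max\{A_{e_i}^{+}(x),B_{e_i}^{+}(x)\}$ and $\nu(x)=\max\{\lambda_{e_i}(x),\mu_{e_i}(x)\}$. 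The goal is to show $\nu(x)\notin\bigl(C^{-}(x),C^{+}(x)\bigr)$ for every $x\in X$.

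Fix $x\in X$ and $e_i\in I\cap J$. By symmetry, assume $A_{e_i}^{+}(x)\geq B_{e_i}^{+}(x)$, so that $C^{+}(x)=A_{e_i}^{+}(x)$ and $C^{-}(x)\geq A_{e_i}^{-}(x)$. The ECSS hypothesis on $(\widetilde F, I)$ gives the dichotomy
\[
\lambda_{e_i}(x)\leq A_{e_i}^{-}(x)\quad\text{or}\quad \lambda_{e_i}(x)\geq A_{e_i}^{+}(x),
\]
and the ECSS hypothesis on $(\widetilde F, I)^{\ast}$ gives the same dichotomy for $\mu_{e_i}(x)$ with respect to the interval $[A_{e_i}^{-}(x),A_{e_i}^{+}(x)]$. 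If either $\lambda_{e_i}(x)\geq A_{e_i}^{+}(x)$ or $\mu_{e_i}(x)\geq A_{e_i}^{+}(x)$, then $\nu(x)\geq A_{e_i}^{+}(x)=C^{+}(x)$ and we are done. Otherwise both $\lambda_{e_i}(x),\mu_{e_i}(x)\leq A_{e_i}^{-}(x)$, so $\nu(x)\leq A_{e_i}^{-}(x)\leq C^{-}(x)$, and again $\nu(x)\notin\bigl(C^{-}(x),C^{+}(x)\bigr)$. The symmetric subcase $B_{e_i}^{+}(x)\geq A_{e_i}^{+}(x)$ uses the ECSS conditions on $(\widetilde G, J)$ and $(\widetilde G, J)^{\ast}$ instead.

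The two starred hypotheses are exactly what close the argument: without $(\widetilde F, I)^{\ast}$ being ECSS one could not force $\mu_{e_i}(x)\notin(A_{e_i}^{-}(x),A_{e_i}^{+}(x))$, and correspondingly for the $B$-side. The only delicate bookkeeping step is to select the larger of $A_{e_i}^{+}(x)$ and $B_{e_i}^{+}(x)$ at the outset; once that is done, both membership values $\lambda_{e_i}(x),\mu_{e_i}(x)$ satisfy a clean dichotomy against the \emph{same} interval, and the case analysis collapses to the two lines above rather than the sixteen subcases one would face trying to compare $\lambda$ with $B$ and $\mu$ with $A$ simultaneously. I expect no genuine obstacle beyond this choice of ordering.
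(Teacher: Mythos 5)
Your proof is correct and follows the same route as the paper: both arguments reduce the problem to the index range $e_i\in I\cap J$ and use the four ECSS hypotheses (on $(\widetilde F, I)$, $(\widetilde G, J)$ and their starred versions) to conclude that $(\lambda_{e_i}\vee\mu_{e_i})(x)\notin\bigl(\max\{A_{e_i}^{-}(x),B_{e_i}^{-}(x)\},\max\{A_{e_i}^{+}(x),B_{e_i}^{+}(x)\}\bigr)$. The paper merely asserts this last implication with a ``Thus we have,'' whereas your WLOG on which of $A_{e_i}^{+}(x)$, $B_{e_i}^{+}(x)$ is larger, followed by the two-sided dichotomy against that single interval, actually proves it, so your write-up is if anything more complete than the published one.
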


\begin{proof}
Since $\left( \widetilde{F,}\text{ }I\right) $,$\left( \widetilde{G,}\text{ }%
J\right) ,$ $\left( \widetilde{F,}\text{ }I\right) ^{\ast }$ and $\left( 
\widetilde{G,}\text{ }J\right) ^{\ast }$ are ECSSs so by using the
definition of an external cubic soft set for $\left( \widetilde{F,}\text{ }%
I\right) $,$\left( \widetilde{G,}\text{ }J\right) ,$ $\left( \widetilde{F,}%
\text{ }I\right) ^{\ast }$ and $\left( \widetilde{G,}\text{ }J\right) ^{\ast
}$, we have $\lambda _{e_{i}}(x)\notin (A_{e_{i}}^{-}(x),A_{e_{i}}^{+}(x))$
for all $e_{i}\in I$ and for all $x\in X$, $\mu _{e_{i}}(x)\notin
(B_{e_{i}}^{-}(x),B_{e_{i}}^{+}(x))$ for all $e_{i}\in J$ and for all $x\in
X,$ $\mu _{e_{i}}(x)\notin (A_{e_{i}}^{-}(x),A_{e_{i}}^{+}(x))$ for all $%
e_{i}\in I$ and for all $x\in X$ and $\lambda _{e_{i}}(x)\notin
(B_{e_{i}}^{-}(x),B_{e_{i}}^{+}(x))$ for all $e_{i}\in J$ and for all $x\in
X $ respectively. Thus we have $(\lambda _{e_{i}}\vee \mu _{e_{i}})(x)\notin
(\max \{A_{e_{i}}^{-}(x),B_{e_{i}}^{-}(x)\},\max
\{A_{e_{i}}^{+}(x),B_{e_{i}}^{+}(x)\})$ for all $e_{i}\in I\cap J$ and for
all $x\in X.$

Also since $\left( \widetilde{F,}\text{ }I\right) \cup _{P}\left( \widetilde{%
G,}\text{ }J\right) =\left( \widetilde{H,}\text{ }C\right) $ where $C=I\cup
J $ and%
\begin{equation*}
\widetilde{H}(e_{i})=\left\{ 
\begin{array}{c}
\begin{array}{c}
\widetilde{F}(e_{i})\text{ If }e_{i}\in I-J \\ 
\widetilde{G}(e_{i})\text{ If }e_{i}\in J-I%
\end{array}%
\text{ \ \ \ \ \ \ \ \ \ \ \ \ } \\ 
\widetilde{F}(e_{i})\vee _{P}\widetilde{G}(e_{i})\text{ If }e_{i}\in I\cap J,%
\end{array}%
\right.
\end{equation*}%
where $\widetilde{F}(e_{i})\vee _{P}\widetilde{G}(e_{i})$ is defined as%
\begin{equation*}
\widetilde{F}(e_{i})\vee _{P}\widetilde{G}(e_{i})=\widetilde{H}%
(e_{i})=\left\{ 
\begin{array}{c}
<x,\text{ }r\max \{A_{e_{i}}(x),B_{e_{i}}(x)\},(\lambda _{e_{i}}\vee \mu
_{e_{i}})(x)>: \\ 
\text{ }x\in X,\text{ }e_{i}\in I\cap J.%
\end{array}%
\right\}
\end{equation*}%
So, by definitions of an external cubic soft set $\left( \widetilde{F,}\text{
}I\right) \cup _{P}\left( \widetilde{G,}\text{ }J\right) $ is an ECSS in $X$.
\end{proof}

\begin{theorem}
Let $\left( \widetilde{F,}\text{ }I\right) =\left\{ \widetilde{F}%
(e_{i})=\left\{ <x,\text{ }A_{e_{i}}(x),\text{ }\lambda _{e_{i}}(x)>:\text{ }%
x\in X\right\} \text{ }e_{i}\in I\right\} $ and $\left( \widetilde{G,}\text{ 
}J\right) =\left\{ \widetilde{G}(e_{i})=\left\{ <x,\text{ }B_{e_{i}}(x),%
\text{ }\mu _{e_{i}}(x)>:\text{ }x\in X\right\} \text{ }e_{i}\in J\right\} $
be ECSSs in $X$ such that 
\begin{equation*}
(\lambda _{e_{i}}\wedge \mu _{e_{i}})(x)\in \left[ 
\begin{array}{c}
\min \{\max \{A_{e_{i}}^{+}(x),B_{e_{i}}^{-}(x)\},\max
\{A_{e_{i}}^{-}(x),B_{e_{i}}^{+}(x)\}\}, \\ 
\max \{\min \{A_{e_{i}}^{+}(x),B_{e_{i}}^{-}(x)\},\min
\{A_{e_{i}}^{-}(x),B_{e_{i}\text{ }}^{+}(x)\}\}%
\end{array}%
\right)
\end{equation*}%
for all $e_{i}\in I,$ for all $e_{i}\in J$ and for all $x\in X.$ Then $%
\left( \widetilde{F,}\text{ }I\right) \cap _{P}\left( \widetilde{G,}\text{ }%
J\right) $ is also an ECSS.
\end{theorem}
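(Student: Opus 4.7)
The plan is to unpack the P-intersection definition and show directly that the resulting fuzzy part avoids the open interval induced by $r\min\{A_{e_i},B_{e_i}\}$. Writing $\mathcal{H}(e_i) = \widetilde{F}(e_i) \wedge_P \widetilde{G}(e_i)$ for $e_i \in I \cap J$, we have
\begin{equation*}
\widetilde{H}(e_i) = \bigl\langle x,\; [\min\{A_{e_i}^-(x),B_{e_i}^-(x)\},\min\{A_{e_i}^+(x),B_{e_i}^+(x)\}],\; (\lambda_{e_i}\wedge \mu_{e_i})(x)\bigr\rangle,
\end{equation*}
so establishing that $(F,I)\cap_P(G,J)$ is an ECSS reduces to showing
\begin{equation*}
(\lambda_{e_i}\wedge \mu_{e_i})(x) \notin \bigl(\min\{A_{e_i}^-(x),B_{e_i}^-(x)\},\,\min\{A_{e_i}^+(x),B_{e_i}^+(x)\}\bigr)
\end{equation*}
for every $e_i \in I\cap J$ and every $x\in X$.

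To prove this, I would fix $e_i$ and $x$, abbreviate $a^\pm := A_{e_i}^\pm(x)$, $b^\pm := B_{e_i}^\pm(x)$, $\ell := \lambda_{e_i}(x)$, $m := \mu_{e_i}(x)$, and run a case analysis on the four possible positions dictated by the ECSS hypothesis: namely $\ell \le a^-$ or $\ell \ge a^+$, together with $m \le b^-$ or $m \ge b^+$. In the two uncrossed cases the conclusion is immediate: if $\ell \le a^-$ and $m \le b^-$ then $\ell \wedge m \le \min\{a^-,b^-\}$, while if $\ell \ge a^+$ and $m \ge b^+$ then $\ell \wedge m \ge \min\{a^+,b^+\}$. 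In either case $\ell \wedge m$ sits outside the open interval and we are done.

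The substantive work is in the two \emph{crossed} cases, say $\ell \le a^-$ with $m \ge b^+$ (the other is symmetric). Here the hypothesis on $(\lambda_{e_i}\wedge\mu_{e_i})(x)$ enters: the forbidden window
\begin{equation*}
\bigl[\min\{\max\{a^+,b^-\},\max\{a^-,b^+\}\},\;\max\{\min\{a^+,b^-\},\min\{a^-,b^+\}\}\bigr)
\end{equation*}
is designed precisely to strip off the subinterval of $(\min\{a^-,b^-\},\min\{a^+,b^+\})$ that could otherwise be hit by $\ell$ or $m$ in the crossed configuration. I would verify, splitting further on the order of $a^-$ versus $b^-$ (and dually $a^+$ versus $b^+$), that once $\ell \wedge m$ is excluded from this window, the only remaining possibilities force $\ell \wedge m \le \min\{a^-,b^-\}$ or $\ell \wedge m \ge \min\{a^+,b^+\}$. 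Combining all four cases yields the ECSS condition at $e_i \in I\cap J$; for $e_i \in I-J$ or $e_i \in J-I$ the conclusion is inherited directly from $(F,I)$ or $(G,J)$ being ECSSs, completing the proof.

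The main obstacle will be the bookkeeping in the crossed cases: verifying that the particular min--max combination in the hypothesis exactly matches what is needed to eliminate the problematic subinterval in every ordering of the four quantities $a^-,a^+,b^-,b^+$. A clean way to organize this is to assume without loss of generality that $a^- \le b^-$ (swapping the roles of the two soft sets otherwise) and then treat the subcases $b^- \le a^+$ and $a^+ < b^-$ separately, as these govern whether the intervals $[a^-,a^+]$ and $[b^-,b^+]$ overlap.
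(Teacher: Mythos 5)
The crossed-case analysis is where your plan breaks, and it breaks because you have inverted the hypothesis. The theorem assumes $(\lambda_{e_{i}}\wedge\mu_{e_{i}})(x)$ \emph{belongs to} the displayed set; it is not an exclusion from a ``forbidden window''. (The notation is admittedly treacherous: the listed left endpoint $\alpha=\min\{\max\{A^{+},B^{-}\},\max\{A^{-},B^{+}\}\}$ is always the third-smallest of the four bounds and the listed right endpoint $\beta=\max\{\min\{A^{+},B^{-}\},\min\{A^{-},B^{+}\}\}$ is the second-smallest, so read literally $[\alpha,\beta)$ is empty; the paper's own proof makes clear the intended meaning is the two-sided pinning $\beta<(\lambda_{e_{i}}\wedge\mu_{e_{i}})(x)\le\alpha$.) Under your exclusion reading the statement is false. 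Take $A_{e_{i}}(x)=[0.1,0.5]$, $\lambda_{e_{i}}(x)=0.6$, $B_{e_{i}}(x)=[0.3,0.7]$, $\mu_{e_{i}}(x)=0.2$: both cubic soft sets are external at this point, $(\lambda_{e_{i}}\wedge\mu_{e_{i}})(x)=0.2$ lies outside the middle band $(0.3,0.5]$ so your version of the hypothesis holds, and yet $0.2\in(0.1,0.5)=\bigl((A_{e_{i}}\cap B_{e_{i}})^{-}(x),(A_{e_{i}}\cap B_{e_{i}})^{+}(x)\bigr)$, so the P-intersection is not external. This is precisely your crossed case $\ell\ge a^{+}$, $m\le b^{-}$, and no amount of bookkeeping will close it: excluding $\ell\wedge m$ from the middle band still leaves it free to sit anywhere in $[0,b^{-}]$, in particular inside $(a^{-},b^{-}]\subseteq(a^{-},a^{+})$.

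The paper's argument runs in the opposite direction: from $\beta<(\lambda_{e_{i}}\wedge\mu_{e_{i}})(x)\le\alpha$ together with externality of $\lambda_{e_{i}}$ and $\mu_{e_{i}}$ it deduces, by cases on which of $A^{\pm},B^{\pm}$ realizes $\alpha$, that $(\lambda_{e_{i}}\wedge\mu_{e_{i}})(x)$ either falls at or below $(A_{e_{i}}\cap B_{e_{i}})^{-}(x)$, exceeds $(A_{e_{i}}\cap B_{e_{i}})^{+}(x)$, or coincides with $(A_{e_{i}}\cap B_{e_{i}})^{+}(x)$, the genuinely crossed configurations being ruled out as contradicting externality. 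Your reduction to showing $(\lambda_{e_{i}}\wedge\mu_{e_{i}})(x)\notin(\min\{A^{-},B^{-}\},\min\{A^{+},B^{+}\})$ and your two uncrossed cases are fine (note only that $\cap_{P}$ is defined on $C=I\cap J$, so there are no $e_{i}\in I-J$ or $J-I$ to handle), but the core step must be redone using the hypothesis as a pinning condition rather than an exclusion.
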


\begin{proof}
Consider $\left( \widetilde{F,}\text{ }I\right) \cap _{P}\left( \widetilde{G,%
}\text{ }J\right) =\left( \widetilde{H,}\text{ }C\right) $, $C=I\cap J$

where $\widetilde{H}(e_{i})=\widetilde{F}(e_{i})\wedge _{P}\widetilde{G}%
(e_{i})$ is defined as%
\begin{equation*}
\widetilde{F}(e_{i})\wedge _{P}\widetilde{G}(e_{i})=\widetilde{H}%
(e_{i})=\left\{ 
\begin{array}{c}
<x,\text{ }r\min \{A_{e_{i}}(x),B_{e_{i}}(x)\},(\lambda _{e_{i}}\wedge \mu
_{e_{i}})(x)>: \\ 
\text{ }x\in X,\text{ }e_{i}\in I\cap J%
\end{array}%
\right\}
\end{equation*}%
For each $e_{i}\in I\cap J,$ take ${\Large \mathcal{\alpha }}_{e_{i}}=\min
\{\max \{A_{e_{i}}^{+}(x),B_{e_{i}}^{-}(x)\}$,$\max
\{A_{e_{i}}^{-}(x),B_{e_{i}}^{+}(x)\}\}$ and

${\Large \mathcal{\beta }}_{e_{i}}=\max \{\min
\{A_{e_{i}}^{+}(x),B_{e_{i}}^{-}(x)\}$,$\min \{A_{i\text{ }%
}^{-}(x),B_{e_{i}}^{+}(x)\}\}.$ Then ${\Large \mathcal{\alpha }}_{e_{i}}$ is
one of $A_{e_{i}}^{-}(x)$, $B_{e_{i}}^{-}(x)$, $A_{e_{i}}^{+}(x)$ and $%
B_{e_{i}}^{+}(x).$ We consider ${\Large \mathcal{\alpha }}%
_{e_{i}}=A_{e_{i}}^{-}(x)$ or $A_{e_{i}}^{+}(x)$ only, as the remaining
cases are similar to this one. If ${\Large \mathcal{\alpha }}%
_{e_{i}}=A_{e_{i}}^{-}(x),$ then $B_{e_{i}}^{-}(x)\leq B_{e_{i}}^{+}(x)\leq
A_{e_{i}}^{-}(x)\leq A_{e_{i}}^{+}(x)$ and so ${\Large \mathcal{\beta }}%
_{e_{i}}=B_{e_{i}}^{+}(x)$ thus $B_{e_{i}}^{-}(x)=(A_{e_{i}}\cap
B_{e_{i}})^{-}(x)\leq (A_{e_{i}}\cap B_{e_{i}})^{+}(x)=B_{e_{i}}^{+}(x)=%
{\Large \mathcal{\beta }}_{e_{i}}<(\lambda _{e_{i}}\wedge \mu _{e_{i}})(x).$
Hence $(\lambda _{e_{i}}\wedge \mu _{e_{i}})(x)\notin ((A_{e_{i}}\cap
B_{e_{i}})^{-}(x)$,$(A_{e_{i}}\cap B_{e_{i}})^{+}(x)).$ If ${\Large \mathcal{%
\alpha }}_{e_{i}}=A_{e_{i}}^{+}(x),$ then $B_{e_{i}}^{-}(x)\leq
A_{e_{i}}^{+}(x)\leq B_{e_{i}}^{+}(x)$ and so ${\Large \mathcal{\beta }}%
_{e_{i}}=\max \{A_{e_{i}}^{-}(x),B_{e_{i}}^{-}(x)\}.$ Assume that ${\Large 
\mathcal{\beta }}_{e_{i}}=A_{e_{i}}^{-}(x),$ then $B_{e_{i}}^{-}(x)\leq
A_{e_{i}}^{-}(x)<(\lambda _{e_{i}}\wedge \mu _{e_{i}})(x)\leq
A_{e_{i}}^{+}(x)\leq B_{e_{i}}^{+}(x).$ So from this, we can write $%
B_{e_{i}}^{-}(x)\leq A_{e_{i}}^{-}(x)<(\lambda _{e_{i}}\wedge \mu
_{e_{i}})(x)<A_{e_{i}}^{+}(x)\leq B_{e_{i}}^{+}(x)$ or $B_{e_{i}}^{-}(x)\leq
A_{e_{i}}^{-}(x)<(\lambda _{e_{i}}\wedge \mu
_{e_{i}})(x)=A_{e_{i}}^{+}(x)\leq B_{e_{i}}^{+}(x).$

For the case $B_{e_{i}}^{-}(x)\leq A_{e_{i}}^{-}(x)<(\lambda _{e_{i}}\wedge
\mu _{e_{i}})(x)<A_{e_{i}}^{+}(x)\leq B_{e_{i}}^{+}(x)$ it is contradiction
to the fact that $\left( \widetilde{F,}\text{ }I\right) $ and $\left( 
\widetilde{G,}\text{ }J\right) $ are ECSSs. For the case $%
B_{e_{i}}^{-}(x)\leq A_{e_{i}}^{-}(x)<(\lambda _{e_{i}}\wedge \mu
_{e_{i}})(x)=A_{e_{i}}^{+}(x)\leq B_{e_{i}}^{+}(x),$ we have $(\lambda
_{e_{i}}\wedge \mu _{e_{i}})(x)\notin ((A_{e_{i}}\cap B_{e_{i}})^{-}(x)$, $%
(A_{e_{i}}\cap B_{e_{i}})^{+}(x))$ because, $(\lambda _{e_{i}}\wedge \mu
_{e_{i}})(x)=A_{e_{i}}^{+}(x)=(A_{e_{i}}\cap B_{e_{i}})^{+}(x).$ Again
assume that ${\Large \mathcal{\beta }}_{e_{i}}=B_{e_{i}}^{-}(x),$ then $%
A_{e_{i}}^{-}(x)\leq B_{e_{i}}^{-}(x)<(\lambda _{e_{i}}\wedge \mu
_{e_{i}})(x)\leq A_{e_{i}}^{+}(x)\leq B_{e_{i}}^{+}(x).$ From this we can
write $A_{e_{i}}^{-}(x)\leq B_{e_{i}}^{-}(x)<(\lambda _{e_{i}}\wedge \mu
_{e_{i}})(x)<A_{e_{i}}^{+}(x)\leq B_{e_{i}}^{+}(x)$ or $A_{e_{i}}^{-}(x)\leq
B_{e_{i}}^{-}(x)<(\lambda _{e_{i}}\wedge \mu
_{e_{i}})(x)=A_{e_{i}}^{+}(x)\leq B_{e_{i}}^{+}(x).$

For the case $A_{e_{i}}^{-}(x)\leq B_{e_{i}}^{-}(x)<(\lambda _{e_{i}}\wedge
\mu _{e_{i}})(x)<A_{e_{i}}^{+}(x)\leq B_{e_{i}}^{+}(x)$ it is contradiction
to the fact that $\left( \widetilde{F,}\text{ }I\right) $ and $\left( 
\widetilde{G,}\text{ }J\right) $ are ECSSs. And if we take the case $%
A_{e_{i}}^{-}(x)\leq B_{e_{i}}^{-}(x)<(\lambda _{e_{i}}\wedge \mu
_{e_{i}})(x)=A_{e_{i}}^{+}(x)\leq B_{e_{i}}^{+}(x)$ we get $(\lambda
_{e_{i}}\wedge \mu _{e_{i}})(x)\notin ((A_{e_{i}}\cap B_{e_{i}})^{-}(x)$, $%
(A_{e_{i}}\cap B_{e_{i}})^{+}(x))$ because, $(\lambda _{e_{i}}\wedge \mu
_{e_{i}})(x)=A_{e_{i}}^{+}(x)=(A_{e_{i}}\cap B_{e_{i}})^{+}(x).$ Hence in
all the cases, $\left( \widetilde{F,}\text{ }I\right) \cap _{P}\left( 
\widetilde{G,}\text{ }J\right) $ is an ECSS in $X$.
\end{proof}

\begin{theorem}
Let $\left( \widetilde{F,}\text{ }I\right) =\left\{ \widetilde{F}%
(e_{i})=\left\{ <x,\text{ }A_{e_{i}}(x),\text{ }\lambda _{e_{i}}(x)>:\text{ }%
x\in X\right\} \text{ }e_{i}\in I,\right\} $ and $\left( \widetilde{G,}\text{
}J\right) =\left\{ \widetilde{G}(e_{i})=\left\{ <x,\text{ }B_{e_{i}}(x),%
\text{ }\mu _{e_{i}}(x)>:\text{ }x\in X\right\} \text{ }e_{i}\in J\right\} $
be cubic soft sets in $X$ such that 
\begin{eqnarray*}
&&\min \{\max \{A_{e_{i}}^{+}(x),B_{e_{i}}^{-}(x)\},\max
\{A_{e_{i}}^{-}(x),B_{e_{i}}^{+}(x)\}\} \\
&=&(\lambda _{e_{i}}\wedge \mu _{e_{i}})(x) \\
&=&\max \{\min \{A_{e_{i}}^{+}(x),B_{e_{i}}^{-}(x)\},\min
\{A_{e_{i}}^{-}(x),B_{e_{i}\text{ }}^{+}(x)\}\}
\end{eqnarray*}%
for all $e_{i}\in I,$ for all $e_{i}\in J$ and for all $x\in X$. Then, the $%
\left( \widetilde{F,}\text{ }I\right) \cap _{P}\left( \widetilde{G,}\text{ }%
J\right) $ is both an ECSS and an ICSS in $X$.
\end{theorem}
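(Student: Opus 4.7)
The strategy is to show that the hypothesis is so restrictive that it forces the fuzzy component of the P-intersection to coincide with the upper endpoint of its interval-valued component; once this is established, both the ICSS and ECSS conditions drop out at once.

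First I would unfold the P-intersection: $\left(\widetilde{F,}\ I\right)\cap_{P}\left(\widetilde{G,}\ J\right)=\left(\widetilde{H,}\ I\cap J\right)$ with
\begin{equation*}
\widetilde{H}(e_i)=\{\langle x,[\min\{A^-_{e_i}(x),B^-_{e_i}(x)\},\min\{A^+_{e_i}(x),B^+_{e_i}(x)\}],(\lambda_{e_i}\wedge\mu_{e_i})(x)\rangle:x\in X\}.
\end{equation*}
Fix $e_i\in I\cap J$ and $x\in X$, and abbreviate $a=A^-_{e_i}(x)$, $b=A^+_{e_i}(x)$, $c=B^-_{e_i}(x)$, $d=B^+_{e_i}(x)$ (so $a\leq b$, $c\leq d$) and $\nu=(\lambda_{e_i}\wedge\mu_{e_i})(x)$. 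In this notation the hypothesis reads
\begin{equation*}
\min\{\max\{b,c\},\max\{a,d\}\}\;=\;\nu\;=\;\max\{\min\{b,c\},\min\{a,d\}\}.
\end{equation*}

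The central claim to establish is $\nu=\min\{b,d\}$. To prove this, I would carry out a case split on the six possible orderings of the two intervals $[a,b]$ and $[c,d]$: disjoint with $b\leq c$; disjoint with $d\leq a$; properly overlapping either way; and one properly contained in the other, either way. In each case a short calculation shows that equating the min-of-max expression with the max-of-min expression pins down exactly one equality among $a,b,c,d$ (for example, $b=c$ in the range $a\leq b\leq c\leq d$; $a=b$ in the range $c\leq a\leq b\leq d$; $c=d$ in the range $a\leq c\leq d\leq b$; and analogously for the remaining orderings), and in every case the resulting common value is $\min\{b,d\}$.

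Once the claim is in hand the conclusion is immediate. From $a\leq b$ and $c\leq d$ we obtain $\min\{a,c\}\leq\min\{b,d\}$, so $\nu=\min\{b,d\}$ yields
\begin{equation*}
\min\{A^-_{e_i}(x),B^-_{e_i}(x)\}\;\leq\;(\lambda_{e_i}\wedge\mu_{e_i})(x)\;\leq\;\min\{A^+_{e_i}(x),B^+_{e_i}(x)\},
\end{equation*}
which is the ICSS condition for $\widetilde{H}(e_i)$. At the same time $\nu$ equals the upper endpoint of this closed interval, so $\nu$ does not lie in the corresponding open interval, giving the ECSS condition. The main obstacle is purely the bookkeeping in the six-case analysis; individually the cases reduce to one or two elementary inequalities, so there is no conceptual difficulty beyond keeping track of which ordering one is in.
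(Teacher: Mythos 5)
Your proposal is correct and follows essentially the same route as the paper: both arguments show that the displayed hypothesis forces the common value $(\lambda_{e_i}\wedge\mu_{e_i})(x)$ to coincide with $\min\{A_{e_i}^{+}(x),B_{e_i}^{+}(x)\}=(A_{e_i}\cap B_{e_i})^{+}(x)$, the upper endpoint of the intersection interval, from which the ICSS and ECSS conditions follow simultaneously. The only difference is organizational — you split into the six orderings of the two intervals, while the paper splits according to which of the four endpoints the quantity $\min\{\max\{A_{e_i}^{+},B_{e_i}^{-}\},\max\{A_{e_i}^{-},B_{e_i}^{+}\}\}$ equals and treats two representative cases — so the underlying argument is the same.
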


\begin{proof}
Consider $\left( \widetilde{F,}\text{ }I\right) \cap _{P}\left( \widetilde{G,%
}\text{ }J\right) =\left( \widetilde{H,}\text{ }C\right) ,$ where $C=I\cap
J. $ $\widetilde{H}(e_{i})=\widetilde{F}(e_{i})\wedge _{P}\widetilde{G}%
(e_{i})$ is defined as $\widetilde{F}(e_{i})\wedge _{P}\widetilde{G}(e_{i})=$

$\left\{ \widetilde{H}(e_{i})=\left\{ <x,\text{ }r\min
\{A_{e_{i}}(x),B_{e_{i}}(x)\},(\lambda _{e_{i}}\wedge \mu _{e_{i}})(x)>:%
\text{ }x\in X\right\} \text{ }e_{i}\in I\cap J\right\} .$ For each $%
e_{i}\in I\cap J$ take

${\Large \mathcal{\alpha }}_{e_{i}}=\min \{\max
\{A_{e_{i}}^{+}(x),B_{e_{i}}^{-}(x)\}$,$\max
\{A_{e_{i}}^{-}(x),B_{e_{i}}^{+}(x)\}\}$ and

${\Large \mathcal{\beta }}_{e_{i}}=\max \{\min
\{A_{e_{i}}^{+}(x),B_{e_{i}}^{-}(x)\}$,$\min \{A_{e_{i}\text{ }%
}^{-}(x),B_{e_{i}}^{+}(x)\}\}.$

Then ${\Large \mathcal{\alpha }}_{e_{i}}$ is one of $A_{e_{i}}^{-}(x)$, $%
B_{e_{i}}^{-}(x)$, $A_{e_{i}\text{ }}^{+}(x)$ and $B_{e_{i}}^{+}(x).$ We
consider ${\Large \mathcal{\alpha }}_{e_{i}}=A_{e_{i}}^{-}(x)$ or $%
A_{e_{i}}^{+}(x)$ only, as the remaining cases are similar to this one. If $%
{\Large \mathcal{\alpha }}_{e_{i}}=A_{e_{i}}^{-}(x)$ then $%
B_{e_{i}}^{-}(x)\leq B_{e_{i}}^{+}(x)\leq A_{e_{i}}^{-}(x)\leq
A_{e_{i}}^{+}(x)$ and so ${\Large \mathcal{\beta }}_{e_{i}}=B_{e_{i}}^{+}(x)$
this implies that $A_{e_{i}}^{-}(x)={\Large \mathcal{\alpha }}%
_{e_{i}}=(\lambda _{e_{i}}\wedge \mu _{e_{i}})(x)={\Large \mathcal{\beta }}%
_{e_{i}}=B_{e_{i}}^{+}(x).$ Thus $B_{e_{i}}^{-}(x)\leq
B_{e_{i}}^{+}(x)=(\lambda _{e_{i}}\wedge \mu
_{e_{i}})(x)=A_{e_{i}}^{-}(x)\leq A_{e_{i}}^{+}(x),$ which implies that $%
(\lambda _{e_{i}}\wedge \mu _{e_{i}})(x)=B_{e_{i}}^{+}(x)=(A_{e_{i}}\cap
B_{e_{i}})^{+}(x).$ Hence $(\lambda _{e_{i}}\wedge \mu _{e_{i}})(x)\notin
((A_{e_{i}}\cap B_{e_{i}})^{-}(x)$,$(A_{e_{i}}\cap B_{e_{i}})^{+}(x))$ and $%
(A_{i}\cap B_{i})^{-}(x)\leq (\lambda _{e_{i}}\wedge \mu _{e_{i}})(x)\leq
(A_{e_{i}}\cap B_{e_{i}})^{+}(x).$ If ${\Large \mathcal{\alpha }}%
_{e_{i}}=A_{e_{i}}^{+}(x),$ then $B_{e_{i}}^{-}(x)\leq A_{e_{i}}^{+}(x)\leq
B_{e_{i}}^{+}(x)$ and so $(\lambda _{e_{i}}\wedge \mu
_{e_{i}})(x)=A_{e_{i}}^{+}(x)=(A_{e_{i}}\cap B_{e_{i}})^{+}(x).$ Hence $%
(\lambda _{e_{i}}\wedge \mu _{e_{i}})(x)\notin ((A_{e_{i}}\cap
B_{e_{i}})^{-}(x)$, $(A_{e_{i}}\cap B_{e_{i}})^{+}(x))$ and $(A_{e_{i}}\cap
B_{e_{i}})^{-}(x)\leq (\lambda _{e_{i}}\wedge \mu _{e_{i}})(x)\leq
(A_{e_{i}}\cap B_{e_{i}})^{+}(x).$ Consequently, we note that $\left( 
\widetilde{F,}\text{ }I\right) \cap _{P}\left( \widetilde{G,}\text{ }%
J\right) $ is both an ECSS and an ICSS in $X$.
\end{proof}

\begin{theorem}
Let $\left( \widetilde{F,}\text{ }I\right) =\left\{ \widetilde{F}%
(e_{i})=\left\{ <x,\text{ }A_{e_{i}}(x),\text{ }\lambda _{e_{i}}(x)>:\text{ }%
x\in X\right\} \text{ }e_{i}\in I,\right\} $ and $\left( \widetilde{G,}\text{
}J\right) =\left\{ \widetilde{G}(e_{i})=\left\{ <x,\text{ }B_{e_{i}}(x),%
\text{ }\mu _{e_{i}}(x)>:\text{ }x\in X\right\} \text{ }e_{i}\in J\right\} $
are ECSSs in $X$ such that 
\begin{equation*}
(\lambda _{e_{i}}\vee \mu _{e_{i}})(x)\in \left( 
\begin{array}{c}
\min \{\max \{A_{e_{i}}^{+}(x),B_{e_{i}}^{-}(x)\},\max
\{A_{e_{i}}^{-}(x),B_{e_{i}}^{+}(x)\}\}, \\ 
\max \{\min \{A_{e_{i}}^{+}(x),B_{e_{i}}^{-}(x)\},\min
\{A_{e_{i}}^{-}(x),B_{e_{i}}^{+}(x)\}\}%
\end{array}%
\right]
\end{equation*}%
for all $e_{i}\in I$ for all $e_{i}\in J$ and for all $x\in X$ then the $%
\left( \widetilde{F,}\text{ }I\right) \cup _{P}\left( \widetilde{G,}\text{ }%
J\right) $ is an ECSS in X.
\end{theorem}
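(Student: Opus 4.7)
The plan is to imitate the structure of the preceding ECSS P-intersection theorem, adapted for the symmetric $r\max$/$\vee$ combination. Writing $\left(\widetilde{F}, I\right) \cup_P \left(\widetilde{G}, J\right) = \left(\widetilde{H}, C\right)$ with $C = I \cup J$, the parameter set decomposes into the three disjoint pieces $I \setminus J$, $J \setminus I$ and $I \cap J$. On the first two pieces $\widetilde{H}(e_i)$ coincides with $\widetilde{F}(e_i)$ or $\widetilde{G}(e_i)$, so externality is inherited immediately from the hypothesis. All the content therefore lies at $e_i \in I \cap J$, where $\widetilde{H}(e_i) = \widetilde{F}(e_i) \vee_P \widetilde{G}(e_i)$ has IVF part $r\max\{A_{e_i}, B_{e_i}\}$ and fuzzy part $\lambda_{e_i} \vee \mu_{e_i}$; what must be shown is that for every $x \in X$,
$$(\lambda_{e_i} \vee \mu_{e_i})(x) \notin \bigl(\max\{A_{e_i}^-(x), B_{e_i}^-(x)\},\ \max\{A_{e_i}^+(x), B_{e_i}^+(x)\}\bigr).$$

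Fix such an $e_i$ and $x$, and set $\alpha_{e_i} = \min\{\max\{A_{e_i}^+(x), B_{e_i}^-(x)\}, \max\{A_{e_i}^-(x), B_{e_i}^+(x)\}\}$ and $\beta_{e_i} = \max\{\min\{A_{e_i}^+(x), B_{e_i}^-(x)\}, \min\{A_{e_i}^-(x), B_{e_i}^+(x)\}\}$, so that the hypothesis reads $(\lambda_{e_i} \vee \mu_{e_i})(x) \in (\alpha_{e_i}, \beta_{e_i}]$. I would then split according to which of the four endpoints $A_{e_i}^-(x), A_{e_i}^+(x), B_{e_i}^-(x), B_{e_i}^+(x)$ realises $\alpha_{e_i}$, handling only $\alpha_{e_i} = A_{e_i}^-(x)$ and $\alpha_{e_i} = A_{e_i}^+(x)$ explicitly since the two $B$-cases follow by swapping the roles of the two ECSSs. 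Each such choice, together with the monotonicity $A_{e_i}^-(x) \leq A_{e_i}^+(x)$ and $B_{e_i}^-(x) \leq B_{e_i}^+(x)$, forces a total ordering on the four endpoints, which in turn pins down $\beta_{e_i}$ and locates where $(\lambda_{e_i} \vee \mu_{e_i})(x)$ must sit.

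The crux, exactly as in the earlier P-intersection argument, is that combining this total ordering with $(\lambda_{e_i} \vee \mu_{e_i})(x) \leq \beta_{e_i}$ forces $(\lambda_{e_i} \vee \mu_{e_i})(x)$ either to coincide with one of the endpoints $A_{e_i}^\pm(x)$ or $B_{e_i}^\pm(x)$ or to fall outside the target open interval altogether. A putative configuration in which $(\lambda_{e_i} \vee \mu_{e_i})(x)$ lies strictly between $\max\{A_{e_i}^-, B_{e_i}^-\}(x)$ and $\max\{A_{e_i}^+, B_{e_i}^+\}(x)$ is then eliminated by invoking the externality of the original cubic soft sets: such a position would entail $\lambda_{e_i}(x) \in (A_{e_i}^-(x), A_{e_i}^+(x))$ or $\mu_{e_i}(x) \in (B_{e_i}^-(x), B_{e_i}^+(x))$, contradicting the ECSS hypothesis on $\left(\widetilde{F}, I\right)$ or $\left(\widetilde{G}, J\right)$. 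Hence the required $\notin$ relation holds on $I \cap J$, and combined with the trivial behaviour on $I \setminus J$ and $J \setminus I$, this shows that $\left(\widetilde{H}, C\right)$ is an ECSS.

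The main obstacle is the case bookkeeping. Four choices for which endpoint realises $\alpha_{e_i}$, each branching into subcases according to whether $(\lambda_{e_i} \vee \mu_{e_i})(x)$ sits exactly at $\beta_{e_i}$ or strictly below it, mean that half-open versus open endpoints must be tracked carefully throughout. The $A$/$B$-symmetry cuts the work roughly in half, but the step in which a strict-interior subcase is eliminated via the externality of the original ECSSs is the one place where that hypothesis is genuinely used, and it is the step most prone to being mis-handled if the subcases are not enumerated with care.
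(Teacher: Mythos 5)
Your proposal follows essentially the same route as the paper's proof: the same decomposition of $C=I\cup J$ into $I\setminus J$, $J\setminus I$ and $I\cap J$, the same quantities $\alpha_{e_i}$ and $\beta_{e_i}$, the same case split on which of $A_{e_i}^{\pm}(x)$, $B_{e_i}^{\pm}(x)$ realises $\alpha_{e_i}$ (treating only the $A$-cases and appealing to symmetry), and the same final step in which the strict-interior subcase is killed because the forced nesting of the two intervals puts $(\lambda_{e_i}\vee\mu_{e_i})(x)=\lambda_{e_i}(x)$ or $\mu_{e_i}(x)$ inside $(A_{e_i}^{-}(x),A_{e_i}^{+}(x))$ or $(B_{e_i}^{-}(x),B_{e_i}^{+}(x))$, contradicting externality. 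The plan is sound and matches the paper's argument in every essential respect.
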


\begin{proof}
Since $\left( \widetilde{F,}\text{ }I\right) \cup _{P}\left( \widetilde{G,}%
\text{ }J\right) $ is defined as $\left( \widetilde{F,}\text{ }I\right) \cup
_{P}\left( \widetilde{G,}\text{ }J\right) =\left( \widetilde{H,}\text{ }%
C\right) $ where $C=I\cup J$ and%
\begin{equation*}
\widetilde{H}(e_{i})=\left\{ 
\begin{array}{c}
\begin{array}{c}
\widetilde{F}(e_{i})\text{ If }e_{i}\in I-J \\ 
\widetilde{G}(e_{i})\text{ If }e_{i}\in J-I%
\end{array}%
\text{ \ \ \ \ \ \ \ \ \ \ \ \ } \\ 
\widetilde{F}(e_{i})\vee _{P}\widetilde{G}(e_{i})\text{ If }e_{i}\in I\cap J,%
\end{array}%
\right.
\end{equation*}%
where $\widetilde{F}(e_{i})\vee _{P}\widetilde{G}(e_{i})$ is defined as%
\begin{equation*}
\widetilde{F}(e_{i})\vee _{P}\widetilde{G}(e_{i})=\widetilde{H}%
(e_{i})=\left\{ 
\begin{array}{c}
<x,\text{ }r\max \{A_{e_{i}}(x),B_{e_{i}}(x)\},(\lambda _{e_{i}}\vee \mu
_{e_{i}})(x)>: \\ 
\text{ }x\in X,\text{ }e_{i}\in I\cap J%
\end{array}%
\right\}
\end{equation*}%
If $e_{i}\in I\cap J,$ ${\Large \mathcal{\alpha }}_{e_{i}}=\min \{\max
\{A_{e_{i}}^{+}(x),B_{e_{i}}^{-}(x)\}$,$\max
\{A_{e_{i}}^{-}(x),B_{e_{i}}^{+}(x)\}\}$ and

${\Large \mathcal{\beta }}_{e_{i}}=\max \{\min
\{A_{e_{i}}^{+}(x),B_{e_{i}}^{-}(x)\}$,$\min \{A_{i\text{ }%
}^{-}(x),B_{e_{i}}^{+}(x)\}\}.$ Then ${\Large \mathcal{\alpha }}_{e_{i}}$ is
one of $A_{e_{i}}^{-}(x)$, $B_{e_{i}}^{-}(x)$, $A_{i\text{ }}^{+}(x)$ and $%
B_{e_{i}}^{+}(x).$ We consider ${\Large \mathcal{\alpha }}%
_{e_{i}}=A_{e_{i}}^{-}(x)$ or $A_{e_{i}}^{+}(x)$ only, as the remaining
cases are similar to this one. If ${\Large \mathcal{\alpha }}%
_{e_{i}}=A_{e_{i}}^{-}(x),$ then $B_{e_{i}}^{-}(x)\leq B_{e_{i}}^{+}(x)\leq
A_{e_{i}}^{-}(x)\leq A_{e_{i}}^{+}(x)$ and so ${\Large \mathcal{\beta }}%
_{e_{i}}=B_{e_{i}}^{+}(x).$ Thus $(A_{e_{i}}\cup
B_{e_{i}})^{-}(x)=A_{e_{i}}^{-}(x)={\Large \mathcal{\alpha }}%
_{e_{i}}>(\lambda _{e_{i}}\vee \mu _{e_{i}})(x).$ Hence $(\lambda
_{e_{i}}\vee \mu _{e_{i}})(x)\notin ((A_{e_{i}}\cup B_{e_{i}})^{-}(x)$,$%
(A_{e_{i}}\cup B_{e_{i}})^{+}(x)).$ If ${\Large \mathcal{\alpha }}%
_{e_{i}}=A_{e_{i}}^{+}(x),$ then $B_{e_{i}}^{-}(x)\leq A_{e_{i}}^{+}(x)\leq
B_{e_{i}}^{+}(x)$ and so ${\Large \mathcal{\beta }}_{e_{i}}=\max
\{A_{e_{i}}^{-}(x),B_{e_{i}}^{-}(x)\}.$ Assume ${\Large \mathcal{\beta }}%
_{e_{i}}=A_{e_{i}}^{-}(x),$ then $B_{e_{i}}^{-}(x)\leq A_{e_{i}}^{-}(x)\leq
(\lambda _{e_{i}}\vee \mu _{e_{i}})(x)<A_{e_{i}}^{+}(x)\leq
B_{e_{i}}^{+}(x), $ so from this we can write $B_{e_{i}}^{-}(x)\leq
A_{e_{i}}^{-}(x)<(\lambda _{e_{i}}\vee \mu _{e_{i}})(x)<A_{e_{i}}^{+}(x)\leq
B_{e_{i}}^{+}(x)$ or $B_{e_{i}}^{-}(x)\leq A_{e_{i}}^{-}(x)=(\lambda
_{e_{i}}\vee \mu _{e_{i}})(x)\leq A_{e_{i}}^{+}(x)\leq B_{e_{i}}^{+}(x).$

For the case $B_{e_{i}}^{-}(x)\leq A_{e_{i}}^{-}(x)<(\lambda _{e_{i}}\vee
\mu _{e_{i}})(x)<A_{e_{i}}^{+}(x)\leq B_{e_{i}}^{+}(x),$ it is contradiction
to the fact that $\left( \widetilde{F,}\text{ }I\right) $ and $\left( 
\widetilde{G,}\text{ }J\right) $ are ECSSs. For the case $%
B_{e_{i}}^{-}(x)\leq A_{e_{i}}^{-}(x)=(\lambda _{e_{i}}\vee \mu
_{e_{i}})(x)\leq A_{e_{i}}^{+}(x)\leq B_{e_{i}}^{+}(x),$ we have $(\lambda
_{e_{i}}\vee \mu _{e_{i}})(x)\notin ((A_{e_{i}}\cup B_{e_{i}})^{-}(x)$,$%
(A_{e_{i}}\cup B_{e_{i}})^{+}(x))$ because, $(A_{e_{i}}\cup
B_{e_{i}})^{-}(x)=A_{e_{i}}^{-}(x)=(\lambda _{e_{i}}\vee \mu _{e_{i}})(x).$
Again assume that

${\Large \mathcal{\beta }}_{e_{i}}=B_{e_{i}}^{-}(x),$ then $%
A_{e_{i}}^{-}(x)\leq B_{e_{i}}^{-}(x)\leq (\lambda _{e_{i}}\vee \mu
_{e_{i}})(x)\leq A_{e_{i}}^{+}(x)\leq B_{e_{i}}^{+}(x),$ so from this we can
write $A_{e_{i}}^{-}(x)\leq B_{e_{i}}^{-}(x)<(\lambda _{e_{i}}\vee \mu
_{e_{i}})(x)<A_{e_{i}}^{+}(x)\leq B_{e_{i}}^{+}(x)$ or $A_{e_{i}}^{-}(x)\leq
B_{e_{i}}^{-}(x)=(\lambda _{e_{i}}\vee \mu _{e_{i}})(x)<A_{e_{i}}^{+}(x)\leq
B_{e_{i}}^{+}(x).$ For the case $A_{e_{i}}^{-}(x)\leq
B_{e_{i}}^{-}(x)<(\lambda _{e_{i}}\vee \mu _{e_{i}})(x)<A_{e_{i}}^{+}(x)\leq
B_{e_{i}}^{+}(x),$ it is contradiction to the fact $\left( \widetilde{F,}%
\text{ }I\right) $ and $\left( \widetilde{G,}\text{ }J\right) $ are ECSSs.
And if we take the case $A_{e_{i}}^{-}(x)\leq B_{e_{i}}^{-}(x)=(\lambda
_{e_{i}}\vee \mu _{e_{i}})(x)\leq A_{e_{i}}^{+}(x)\leq B_{e_{i}}^{+}(x),$ we
get $(\lambda _{e_{i}}\vee \mu _{e_{i}})(x)\notin ((A_{e_{i}}\cup
B_{e_{i}})^{-}(x)$, $(A_{e_{i}}\cup B_{e_{i}})^{+}(x))$ because, $%
(A_{e_{i}}\cup B_{e_{i}})^{-}(x)=B_{e_{i}}^{-}(x)=(\lambda _{e_{i}}\vee \mu
_{e_{i}})(x).$ If $e_{i}\in I-J$ or $e_{i}\in J-I,$ then we have the
trivialresult. Hence $\left( \widetilde{F,}\text{ }I\right) \cup _{P}\left( 
\widetilde{G,}\text{ }J\right) $ is an ECSS in $X$.
\end{proof}

\begin{theorem}
Let $\left( \widetilde{F,}\text{ }I\right) =\left\{ \widetilde{F}%
(e_{i})=\left\{ <x,\text{ }A_{e_{i}}(x),\text{ }\lambda _{e_{i}}(x)>:\text{ }%
x\in X\right\} \text{ }e_{i}\in I,\right\} $ and $\left( \widetilde{G,}\text{
}J\right) =\left\{ \widetilde{G}(e_{i})=\left\{ <x,\text{ }B_{e_{i}}(x),%
\text{ }\mu _{e_{i}}(x)>:\text{ }x\in X\right\} e_{i}\in J\right\} $ are
ECSSs in $X$ such that 
\begin{equation*}
(\lambda _{e_{i}}\wedge \mu _{e_{i}})(x)\in \left( 
\begin{array}{c}
\min \{\max \{A_{e_{i}}^{+}(x),B_{e_{i}}^{-}(x)\},\max
\{A_{e_{i}}^{-}(x),B_{e_{i}}^{+}(x)\}\}, \\ 
\max \{\min \{A_{e_{i}}^{+}(x),B_{e_{i}}^{-}(x)\},\min
\{A_{e_{i}}^{-}(x),B_{i\text{ }}^{+}(x)\}\}%
\end{array}%
\right]
\end{equation*}%
for all $e_{i}\in I$ for all $e_{i}\in J$ and for all $x\in X$ then $\left( 
\widetilde{F,}\text{ }I\right) \cup _{R}\left( \widetilde{G,}\text{ }%
J\right) $ is also an ECSS in $X$.
\end{theorem}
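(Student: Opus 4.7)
The plan is to mirror the argument of the preceding theorem on the $P$-union of ECSSs, with the only adjustments being that $R$-union uses $\wedge$ (rather than $\vee$) on the fuzzy part and that the hypothesis now places $(\lambda_{e_i}\wedge\mu_{e_i})(x)$ in a half-open rather than open interval. First I would unfold the definition $(\widetilde{F},I)\cup_R(\widetilde{G},J)=(\widetilde{H},C)$ with $C=I\cup J$ and
\[
\widetilde{H}(e_i)=\begin{cases}\widetilde{F}(e_i), & e_i\in I-J,\\ \widetilde{G}(e_i), & e_i\in J-I,\\ \widetilde{F}(e_i)\vee_R\widetilde{G}(e_i), & e_i\in I\cap J,\end{cases}
\]
where $\widetilde{F}(e_i)\vee_R\widetilde{G}(e_i)$ carries $r\max\{A_{e_i}(x),B_{e_i}(x)\}$ on the IVF side and $(\lambda_{e_i}\wedge\mu_{e_i})(x)$ on the fuzzy side. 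On $I-J$ and $J-I$ the ECSS conclusion transfers immediately from the hypothesis on $(\widetilde{F},I)$ and $(\widetilde{G},J)$, so the substantive case is $e_i\in I\cap J$.

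Fix $e_i\in I\cap J$ and $x\in X$ and set $\alpha_{e_i}=\min\{\max\{A^+_{e_i}(x),B^-_{e_i}(x)\},\max\{A^-_{e_i}(x),B^+_{e_i}(x)\}\}$ and $\beta_{e_i}=\max\{\min\{A^+_{e_i}(x),B^-_{e_i}(x)\},\min\{A^-_{e_i}(x),B^+_{e_i}(x)\}\}$. The hypothesis gives $(\lambda_{e_i}\wedge\mu_{e_i})(x)\in(\alpha_{e_i},\beta_{e_i}]$, and I need to show that the same value avoids the open interval $(\max\{A^-_{e_i}(x),B^-_{e_i}(x)\},\max\{A^+_{e_i}(x),B^+_{e_i}(x)\})$. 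The quantity $\alpha_{e_i}$ must be one of $A^-_{e_i}(x),A^+_{e_i}(x),B^-_{e_i}(x),B^+_{e_i}(x)$, and by the symmetry between $\widetilde{F}$ and $\widetilde{G}$ it suffices to treat $\alpha_{e_i}=A^-_{e_i}(x)$ and $\alpha_{e_i}=A^+_{e_i}(x)$.

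In the sub-case $\alpha_{e_i}=A^-_{e_i}(x)$ the ordering collapses to $B^-_{e_i}(x)\le B^+_{e_i}(x)\le A^-_{e_i}(x)\le A^+_{e_i}(x)$, which forces $\beta_{e_i}=B^+_{e_i}(x)$ and hence $(\lambda_{e_i}\wedge\mu_{e_i})(x)\le B^+_{e_i}(x)\le A^-_{e_i}(x)=\max\{A^-_{e_i}(x),B^-_{e_i}(x)\}$, placing the fuzzy value at or below the lower endpoint of the forbidden interval. In the sub-case $\alpha_{e_i}=A^+_{e_i}(x)$ the relation $B^-_{e_i}(x)\le A^+_{e_i}(x)\le B^+_{e_i}(x)$ holds, so $\beta_{e_i}=\max\{A^-_{e_i}(x),B^-_{e_i}(x)\}$; splitting on which argument realises $\beta_{e_i}$ yields either a strictly-interior configuration that contradicts the ECSS property of $\widetilde{F}(e_i)$ or $\widetilde{G}(e_i)$ (invoked through $(\lambda_{e_i}\wedge\mu_{e_i})(x)\le\lambda_{e_i}(x)$ and $(\lambda_{e_i}\wedge\mu_{e_i})(x)\le\mu_{e_i}(x)$), or a boundary configuration in which $(\lambda_{e_i}\wedge\mu_{e_i})(x)=A^+_{e_i}(x)=(A_{e_i}\cup B_{e_i})^+(x)$ and hence escapes the open interval.

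The main obstacle is the configuration $B^-_{e_i}(x)\le A^-_{e_i}(x)<(\lambda_{e_i}\wedge\mu_{e_i})(x)<A^+_{e_i}(x)\le B^+_{e_i}(x)$ that arises in the second branch: it must be ruled out by combining the ECSS hypotheses on $\widetilde{F}(e_i)$ and $\widetilde{G}(e_i)$ simultaneously, since neither hypothesis individually excludes it. Once this scenario is eliminated, the rest of the verification is a transcription of the computation already carried out for the $P$-union analogue, transported along the switch from $\vee$ to $\wedge$ on the fuzzy coordinate.
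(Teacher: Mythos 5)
Your plan follows the paper's proof almost step for step: the same unfolding of $\cup_{R}$, the same quantities $\alpha_{e_i}$ and $\beta_{e_i}$, the same reduction to two representative cases for $\alpha_{e_i}$ (the paper happens to treat $\alpha_{e_i}=B^{-}_{e_i}(x)$ and $B^{+}_{e_i}(x)$ where you treat $A^{-}_{e_i}(x)$ and $A^{+}_{e_i}(x)$, which is immaterial by symmetry), and the same dichotomy in the overlapping sub-case between a strictly interior configuration that contradicts externality and a boundary configuration that lands on an endpoint of $A_{e_i}\cup B_{e_i}$; the trivial handling of $e_i\in I-J$ and $e_i\in J-I$ also matches.

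Two points need repair before this becomes the paper's argument. First, in the sub-case $\alpha_{e_i}=A^{+}_{e_i}(x)$ you identify the escape point as $(\lambda_{e_i}\wedge\mu_{e_i})(x)=A^{+}_{e_i}(x)=(A_{e_i}\cup B_{e_i})^{+}(x)$; but in that configuration $B^{-}_{e_i}(x)\le A^{+}_{e_i}(x)\le B^{+}_{e_i}(x)$, so $(A_{e_i}\cup B_{e_i})^{+}(x)=\max\{A^{+}_{e_i}(x),B^{+}_{e_i}(x)\}=B^{+}_{e_i}(x)$ and $A^{+}_{e_i}(x)$ is in general an \emph{interior} point of the union interval, not an endpoint. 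You appear to have transported the computation from the intersection theorems, where $\min\{A^{+}_{e_i}(x),B^{+}_{e_i}(x)\}=A^{+}_{e_i}(x)$ really is the upper endpoint. The correct escape here, as in the paper, is at the \emph{lower} endpoint: the surviving boundary configuration pins $(\lambda_{e_i}\wedge\mu_{e_i})(x)$ at $\beta_{e_i}=\max\{A^{-}_{e_i}(x),B^{-}_{e_i}(x)\}=(A_{e_i}\cup B_{e_i})^{-}(x)$. Second, the contradiction in the strictly interior configuration cannot be extracted from $(\lambda_{e_i}\wedge\mu_{e_i})(x)\le\lambda_{e_i}(x)$ and $(\lambda_{e_i}\wedge\mu_{e_i})(x)\le\mu_{e_i}(x)$ alone, since those inequalities are compatible with both $\lambda_{e_i}(x)$ and $\mu_{e_i}(x)$ sitting above their respective upper bounds. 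What is actually needed is that $(\lambda_{e_i}\wedge\mu_{e_i})(x)$ \emph{equals} one of $\lambda_{e_i}(x)$, $\mu_{e_i}(x)$, combined with the observation that the configuration $B^{-}_{e_i}(x)\le A^{-}_{e_i}(x)<(\lambda_{e_i}\wedge\mu_{e_i})(x)<A^{+}_{e_i}(x)\le B^{+}_{e_i}(x)$ places that common value strictly inside both $(A^{-}_{e_i}(x),A^{+}_{e_i}(x))$ and $(B^{-}_{e_i}(x),B^{+}_{e_i}(x))$, so whichever of the two it equals violates the corresponding ECSS hypothesis. With these two corrections your argument coincides with the one given in the paper.
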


\begin{proof}
Since $\left( \widetilde{F,}\text{ }I\right) \cup _{R}\left( \widetilde{G,}%
\text{ }J\right) $ is defined as $\left( \widetilde{F,}\text{ }I\right) \cup
_{R}\left( \widetilde{G,}\text{ }J\right) =\left( \widetilde{H,}\text{ }%
C\right) ,$ where $C=I\cup J$ and%
\begin{equation*}
\widetilde{H}(e_{i})=\left\{ 
\begin{array}{c}
\begin{array}{c}
\widetilde{F}(e_{i})\text{ If }e_{i}\in I-J \\ 
\widetilde{G}(e_{i})\text{ If }e_{i}\in J-I%
\end{array}%
\text{ \ \ \ \ \ \ \ \ \ \ \ \ } \\ 
\widetilde{F}(e_{i})\vee _{R}\widetilde{G}(e_{i})\text{ If }e_{i}\in I\cap J,%
\end{array}%
\right.
\end{equation*}%
where $\widetilde{F}(e_{i})\vee _{R}\widetilde{G}(e_{i})$ is defined as%
\begin{equation*}
\widetilde{F}(e_{i})\vee _{R}\widetilde{G}(e_{i})=\widetilde{H}%
(e_{i})=\left\{ 
\begin{array}{c}
<x,\text{ }r\max \{A_{e_{i}\text{ }}(x),B_{e_{i}\text{ }}(x)\},(\lambda
_{e_{i}\text{ }}\wedge \mu _{e_{i}\text{ }})(x)>: \\ 
\text{ }x\in X,\text{ }e_{i}\in I\cap J.%
\end{array}%
\right\}
\end{equation*}%
If $e_{i}\in I\cap J,$ take ${\Large \mathcal{\alpha }}_{e_{i}}=\min \{\max
\{A_{e_{i}}^{+}(x),B_{e_{i}}^{-}(x)\}$,$\max
\{A_{e_{i}}^{-}(x),B_{e_{i}}^{+}(x)\}\}$ and

${\Large \mathcal{\beta }}_{e_{i}}=\max \{\min
\{A_{e_{i}}^{+}(x),B_{e_{i}}^{-}(x)\}$,$\min \{A_{e_{i}\text{ }%
}^{-}(x),B_{e_{i}}^{+}(x)\}\}.$ Then ${\Large \mathcal{\alpha }}_{e_{i}}$ is
one of $A_{e_{i}}^{-}(x)$, $B_{e_{i}}^{-}(x)$, $A_{i\text{ }}^{+}(x)$ and $%
B_{e_{i}}^{+}(x).$ We consider ${\Large \mathcal{\alpha }}%
_{e_{i}}=B_{e_{i}}^{-}(x)$ or $B_{e_{i}}^{+}(x)$ only, as the remaining
cases are similar to this one. If ${\Large \mathcal{\alpha }}%
_{e_{i}}=B_{e_{i}}^{-}(x)$ then $A_{e_{i}}^{-}(x)\leq A_{e_{i}}^{+}(x)\leq
B_{e_{i}}^{-}(x)\leq B_{e_{i}}^{+}(x)$ and so ${\Large \mathcal{\beta }}%
_{e_{i}}=A_{e_{i}}^{+}(x).$ Thus $(A_{e_{i}}\cup
B_{e_{i}})^{-}(x)=B_{e_{i}}^{-}(x)={\Large \mathcal{\alpha }}%
_{e_{i}}>(\lambda _{e_{i}}\wedge \mu _{e_{i}})(x).$ Hence $(\lambda
_{e_{i}}\wedge \mu _{e_{i}})(x)\notin ((A_{e_{i}}\cup B_{e_{i}})^{-}(x)$,$%
(A_{e_{i}}\cup B_{e_{i}})^{+}(x)).$

If ${\Large \mathcal{\alpha }}_{e_{i}}=B_{e_{i}}^{+}(x),$ then $%
A_{e_{i}}^{-}(x)\leq B_{e_{i}}^{+}(x)\leq A_{e_{i}}^{+}(x)$ and so ${\Large 
\mathcal{\beta }}_{e_{i}}=\max \{A_{e_{i}}^{-}(x),B_{e_{i}}^{-}(x)\}.$
Assume ${\Large \mathcal{\beta }}_{e_{i}}=A_{e_{i}}^{-}(x),$ then we have $%
B_{e_{i}}^{-}(x)\leq A_{e_{i}}^{-}(x)\leq (\lambda _{e_{i}}\wedge \mu
_{e_{i}})(x)<B_{e_{i}}^{+}(x)\leq A_{e_{i}}^{+}(x).$ So from this we can
write $B_{e_{i}}^{-}(x)\leq A_{e_{i}}^{-}(x)<(\lambda _{e_{i}}\wedge \mu
_{e_{i}})(x)<B_{e_{i}}^{+}(x)\leq A_{e_{i}}^{+}(x)$ or $B_{e_{i}}^{-}(x)\leq
A_{e_{i}}^{-}(x)=(\lambda _{e_{i}}\wedge \mu _{e_{i}})(x)\leq
B_{e_{i}}^{+}(x)\leq A_{e_{i}}^{+}(x).$ For the case $B_{e_{i}}^{-}(x)\leq
A_{e_{i}}^{-}(x)<(\lambda _{e_{i}}\wedge \mu
_{e_{i}})(x)<B_{e_{i}}^{+}(x)\leq A_{e_{i}}^{+}(x),$ it is contradiction to
the fact that $\left( \widetilde{F,}\text{ }I\right) $ and $\left( 
\widetilde{G,}\text{ }J\right) $ are ECSSs. For the case $%
B_{e_{i}}^{-}(x)<A_{e_{i}}^{-}(x)=(\lambda _{e_{i}}\wedge \mu
_{e_{i}})(x)\leq B_{e_{i}}^{+}(x)\leq A_{e_{i}}^{+}(x),$ we have $(\lambda
_{e_{i}}\wedge \mu _{e_{i}})(x)\notin ((A_{e_{i}}\cup B_{e_{i}})^{-}(x)$,$%
(A_{e_{i}}\cup B_{e_{i}})^{+}(x))$ because $(A_{i}\cup
B_{i})^{-}(x)=A_{e_{i}}^{-}(x)=(\lambda _{e_{i}}\wedge \mu _{e_{i}})(x).$
Again assume that ${\Large \mathcal{\beta }}_{e_{i}}=B_{e_{i}}^{-}(x),$ then
we have $A_{e_{i}}^{-}(x)\leq B_{e_{i}}^{-}(x)\leq (\lambda _{e_{i}}\wedge
\mu _{e_{i}})(x)\leq B_{e_{i}}^{+}(x)\leq A_{e_{i}}^{+}(x).$ From this we
can write $A_{e_{i}}^{-}(x)\leq B_{e_{i}}^{-}(x)<(\lambda _{e_{i}}\wedge \mu
_{e_{i}})(x)<B_{e_{i}}^{+}(x)\leq A_{e_{i}}^{+}(x)$ or $A_{e_{i}}^{-}(x)\leq
B_{e_{i}}^{-}(x)=(\lambda _{e_{i}}\wedge \mu
_{e_{i}})(x)<B_{e_{i}}^{+}(x)\leq A_{e_{i}}^{+}(x).$ For the case $%
A_{e_{i}}^{-}(x)\leq B_{e_{i}}^{-}(x)<(\lambda _{e_{i}}\wedge \mu
_{e_{i}})(x)<B_{e_{i}}^{+}(x)\leq A_{e_{i}}^{+}(x),$ it is contradiction to
the fact $\left( \widetilde{F,}\text{ }I\right) $ and $\left( \widetilde{G,}%
\text{ }J\right) $ are ECSSs. And if we take the case $A_{e_{i}}^{-}(x)\leq
B_{e_{i}}^{-}(x)=(\lambda _{e_{i}}\wedge \mu _{e_{i}})(x)\leq
B_{e_{i}}^{+}(x)\leq A_{e_{i}}^{+}(x),$ we get $(\lambda _{e_{i}}\wedge \mu
_{e_{i}})(x)\notin ((A_{e_{i}}\cup B_{e_{i}})^{-}(x)$,$(A_{e_{i}}\cup
B_{e_{i}})^{+}(x))$ because, $(A_{e_{i}}\cup
B_{e_{i}})^{-}(x)=B_{e_{i}}^{-}(x)=(\lambda _{e_{i}}\wedge \mu _{e_{i}})(x).$
If $e_{i}\in I-J$ or $e_{i}\in J-I$, then the result is trivial. Hence $%
\left( \widetilde{F,}\text{ }I\right) \cup _{R}\left( \widetilde{G,}\text{ }%
J\right) $ is an ECSS in $X$.
\end{proof}

\begin{theorem}
Let $\left( \widetilde{F,}\text{ }I\right) =\left\{ \widetilde{F}%
(e_{i})=\left\{ <x,\text{ }A_{e_{i}}(x),\text{ }\lambda _{e_{i}}(x)>:\text{ }%
x\in X\right\} \text{ }e_{i}\in I,\right\} $ and $\left( \widetilde{G,}\text{
}J\right) =\left\{ \widetilde{G}(e_{i})=\left\{ <x,\text{ }B_{e_{i}}(x),%
\text{ }\mu _{e_{i}}(x)>:\text{ }x\in X\right\} \text{ }e_{i}\in J\right\} $
are ECSSs in $X$ such that 
\begin{equation*}
(\lambda _{e_{i}}\vee \mu _{e_{i}})(x)\in \left[ 
\begin{array}{c}
\min \{\max \{A_{e_{i}}^{+}(x),B_{e_{i}}^{-}(x)\},\max
\{A_{e_{i}}^{-}(x),B_{e_{i}}^{+}(x)\}\}, \\ 
\max \{\min \{A_{e_{i}}^{+}(x),B_{e_{i}}^{-}(x)\},\min
\{A_{e_{i}}^{-}(x),B_{i\text{ }}^{+}(x)\}\}%
\end{array}%
\right)
\end{equation*}%
for all $e_{i}\in I$ for all $e_{i}\in J$ and for all $x\in X$ then the $%
\left( \widetilde{F,}\text{ }I\right) \cap _{R}\left( \widetilde{G,}\text{ }%
J\right) $ is an ECSS in $X$.
\end{theorem}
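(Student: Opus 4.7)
The plan is to follow the same case-analysis template used in the preceding R-union and P-intersection theorems, adapted to the R-intersection operation. First I would unfold the definition: setting $\left(\widetilde{F},I\right)\cap_{R}\left(\widetilde{G},J\right)=\left(\widetilde{H},C\right)$ with $C=I\cap J$, and for each $e_{i}\in I\cap J$,
\[
\widetilde{H}(e_{i})=\{\langle x,\, r\min\{A_{e_{i}}(x),B_{e_{i}}(x)\},(\lambda_{e_{i}}\vee\mu_{e_{i}})(x)\rangle : x\in X\},
\]
so that $(A_{e_{i}}\cap B_{e_{i}})^{-}(x)=\min\{A_{e_{i}}^{-}(x),B_{e_{i}}^{-}(x)\}$ and $(A_{e_{i}}\cap B_{e_{i}})^{+}(x)=\min\{A_{e_{i}}^{+}(x),B_{e_{i}}^{+}(x)\}$. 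The ECSS conclusion then reduces to showing $(\lambda_{e_{i}}\vee\mu_{e_{i}})(x)\notin\bigl((A_{e_{i}}\cap B_{e_{i}})^{-}(x),(A_{e_{i}}\cap B_{e_{i}})^{+}(x)\bigr)$ for every $e_{i}\in I\cap J$ and every $x\in X$; indices in $I-J$ or $J-I$ simply recover $\widetilde{F}(e_{i})$ or $\widetilde{G}(e_{i})$, and the ECSS property is inherited immediately from the hypothesis on $\left(\widetilde{F},I\right)$ and $\left(\widetilde{G},J\right)$.

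Next I would introduce the abbreviations $\alpha_{e_{i}}=\min\{\max\{A_{e_{i}}^{+}(x),B_{e_{i}}^{-}(x)\},\max\{A_{e_{i}}^{-}(x),B_{e_{i}}^{+}(x)\}\}$ and $\beta_{e_{i}}=\max\{\min\{A_{e_{i}}^{+}(x),B_{e_{i}}^{-}(x)\},\min\{A_{e_{i}}^{-}(x),B_{e_{i}}^{+}(x)\}\}$, so that the hypothesis reads $(\lambda_{e_{i}}\vee\mu_{e_{i}})(x)\in[\alpha_{e_{i}},\beta_{e_{i}})$. Then I would exhaust the four possibilities for $\alpha_{e_{i}}$; by the symmetry exploited in the preceding proofs it suffices to treat $\alpha_{e_{i}}\in\{A_{e_{i}}^{-}(x),A_{e_{i}}^{+}(x)\}$, the $B$-cases being completely analogous. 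In each branch the identification of $\alpha_{e_{i}}$ pins down the ordering of $A_{e_{i}}^{\pm}(x)$ and $B_{e_{i}}^{\pm}(x)$, and a secondary split on the value of $\beta_{e_{i}}$ identifies where $(\lambda_{e_{i}}\vee\mu_{e_{i}})(x)$ can lie.

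The main obstacle will be the careful bookkeeping within the subcases, in particular distinguishing strict from non-strict inequalities. Whenever the chain $A_{e_{i}}^{-}(x)<(\lambda_{e_{i}}\vee\mu_{e_{i}})(x)<A_{e_{i}}^{+}(x)$ (or its $B$-analogue) would appear, the argument must reject it as contradicting the ECSS hypothesis on $\left(\widetilde{F},I\right)$ (respectively $\left(\widetilde{G},J\right)$); the remaining non-strict alternatives force $(\lambda_{e_{i}}\vee\mu_{e_{i}})(x)$ to coincide with one of $A_{e_{i}}^{\pm}(x)$ or $B_{e_{i}}^{\pm}(x)$, and consequently with $\min\{A_{e_{i}}^{-}(x),B_{e_{i}}^{-}(x)\}=(A_{e_{i}}\cap B_{e_{i}})^{-}(x)$ or $\min\{A_{e_{i}}^{+}(x),B_{e_{i}}^{+}(x)\}=(A_{e_{i}}\cap B_{e_{i}})^{+}(x)$. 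Either way, the value is excluded from the \emph{open} interval. Collecting the cases yields the ECSS property of $\left(\widetilde{F},I\right)\cap_{R}\left(\widetilde{G},J\right)$, finishing the proof in essentially the same template as the earlier R-union and P-union versions, only with $r\min$ replacing $r\max$ in the interval component.
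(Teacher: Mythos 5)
Your plan follows the paper's own proof essentially verbatim: the same unfolding of $\cap_{R}$ into $r\min$ paired with $\lambda_{e_{i}}\vee\mu_{e_{i}}$, the same abbreviations $\alpha_{e_{i}}$ and $\beta_{e_{i}}$, the same reduction by symmetry to two cases for $\alpha_{e_{i}}$ (the paper happens to treat the $B$-cases where you treat the $A$-cases), and the same dichotomy in each sub-case between a strict chain that contradicts the ECSS hypothesis and an equality with an endpoint of $(A_{e_{i}}\cap B_{e_{i}})(x)$. The only point to watch when writing it out is the orientation of the hypothesis interval: since $\min\{\max\{A_{e_{i}}^{+}(x),B_{e_{i}}^{-}(x)\},\max\{A_{e_{i}}^{-}(x),B_{e_{i}}^{+}(x)\}\}\geq\max\{\min\{A_{e_{i}}^{+}(x),B_{e_{i}}^{-}(x)\},\min\{A_{e_{i}}^{-}(x),B_{e_{i}}^{+}(x)\}\}$, the condition must be used as $\beta_{e_{i}}<(\lambda_{e_{i}}\vee\mu_{e_{i}})(x)\leq\alpha_{e_{i}}$, exactly as the paper's proof does, rather than as the literal left-to-right reading $\alpha_{e_{i}}\leq\cdot<\beta_{e_{i}}$, which would be vacuous.
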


\begin{proof}
Consider $\left( \widetilde{F,}\text{ }I\right) \cap _{R}\left( \widetilde{G,%
}\text{ }J\right) =\left( \widetilde{H,}\text{ }C\right) $, where $C=I\cap
J. $ $\widetilde{H}(e_{i})=\widetilde{F}(e_{i})\wedge _{R}\widetilde{G}%
(e_{i})$ is defined as%
\begin{equation*}
\widetilde{F}(e_{i})\wedge _{R}\widetilde{G}(e_{i})=\widetilde{H}%
(e_{i})=\left\{ 
\begin{array}{c}
<x,\text{ }r\min \{A_{e_{i}}(x),B_{e_{i}}(x)\},(\lambda _{e_{i}}\vee \mu
_{e_{i}})(x)>: \\ 
\text{ }x\in X,\text{ }e_{i}\in I\cap J%
\end{array}%
\right\}
\end{equation*}
For each $e_{i}\in I\cap J,$ take ${\Large \mathcal{\alpha }}_{e_{i}}=\min
\{\max \{A_{e_{i}}^{+}(x),B_{e_{i}}^{-}(x)\}$,$\max
\{A_{e_{i}}^{-}(x),B_{e_{i}}^{+}(x)\}\}$ and ${\Large \mathcal{\beta }}%
_{e_{i}}=\max \{\min \{A_{e_{i}}^{+}(x),B_{e_{i}}^{-}(x)\}$,$\min \{A_{i%
\text{ }}^{-}(x),B_{e_{i}}^{+}(x)\}\}.$ Then ${\Large \mathcal{\alpha }}%
_{e_{i}}$ is one of $A_{e_{i}}^{-}(x)$, $B_{e_{i}}^{-}(x)$, $A_{i\text{ }%
}^{+}(x)$ and $B_{e_{i}}^{+}(x).$ We consider ${\Large \mathcal{\alpha }}%
_{e_{i}}=B_{e_{i}}^{-}(x)$ or $B_{e_{i}}^{+}(x)$ only, as the remaining
cases are similar to this one. If ${\Large \mathcal{\alpha }}%
_{e_{i}}=B_{e_{i}}^{-}(x),$ then $A_{e_{i}}^{-}(x)\leq A_{e_{i}}^{+}(x)\leq
B_{e_{i}}^{-}(x)\leq B_{e_{i}}^{+}(x)$ and so ${\Large \mathcal{\beta }}%
_{e_{i}}=A_{e_{i}}^{+}(x).$ Then by given inequality we have $(A_{e_{i}}\cap
B_{e_{i}})^{+}(x)=A_{e_{i}}^{+}(x)={\Large \mathcal{\beta }}%
_{e_{i}}<(\lambda _{e_{i}}\vee \mu _{e_{i}})(x).$ Thus we have $(\lambda
_{e_{i}}\vee \mu _{e_{i}})(x)\notin ((A_{e_{i}}\cap B_{e_{i}})^{-}(x)$,$%
(A_{e_{i}}\cap B_{e_{i}})^{+}(x)).$ If ${\Large \mathcal{\alpha }}%
_{e_{i}}=B_{e_{i}}^{+}(x),$ then $A_{e_{i}}^{-}(x)\leq B_{e_{i}}^{+}(x)\leq
A_{e_{i}}^{+}(x)$ and so ${\Large \mathcal{\beta }}_{e_{i}}=\max
\{A_{e_{i}}^{-}(x),B_{e_{i}}^{-}(x)\}.$ Assume ${\Large \mathcal{\beta }}%
_{e_{i}}=A_{e_{i}}^{-}(x),$ then we have $B_{e_{i}}^{-}(x)\leq
A_{e_{i}}^{-}(x)<(\lambda _{e_{i}}\vee \mu _{e_{i}})(x)\leq
B_{e_{i}}^{+}(x)\leq A_{e_{i}}^{+}(x).$ So from this we can write $%
B_{e_{i}}^{-}(x)\leq A_{e_{i}}^{-}(x)<(\lambda _{e_{i}}\vee \mu
_{e_{i}})(x)<B_{e_{i}}^{+}(x)\leq A_{e_{i}}^{+}(x)$ or $B_{e_{i}}^{-}(x)\leq
A_{e_{i}}^{-}(x)<(\lambda _{e_{i}}\vee \mu _{e_{i}})(x)=B_{e_{i}}^{+}(x)\leq
A_{e_{i}}^{+}(x).$ For the case $B_{e_{i}}^{-}(x)\leq
A_{e_{i}}^{-}(x)<(\lambda _{e_{i}}\vee \mu _{e_{i}})(x)<B_{e_{i}}^{+}(x)\leq
A_{e_{i}}^{+}(x),$ it is contradiction to the fact that $\left( \widetilde{F,%
}\text{ }I\right) $ and $\left( \widetilde{G,}\text{ }J\right) $ are ECSSs.
For the case $B_{e_{i}}^{-}(x)\leq A_{e_{i}}^{-}(x)<(\lambda _{e_{i}}\vee
\mu _{e_{i}})(x)=B_{e_{i}}^{+}(x)\leq A_{e_{i}}^{+}(x),$ we have $(\lambda
_{e_{i}}\vee \mu _{e_{i}})(x)\notin ((A_{e_{i}}\cap B_{e_{i}})^{-}(x)$, $%
(A_{e_{i}}\cap B_{e_{i}})^{+}(x))$ because, $(A_{e_{i}}\cap
B_{e_{i}})^{+}(x)=B_{e_{i}}^{+}(x)=(\lambda _{e_{i}}\vee \mu _{e_{i}})(x).$
Again assume that

${\Large \mathcal{\beta }}_{e_{i}}=B_{e_{i}}^{-}(x),$ then we have $%
A_{e_{i}}^{-}(x)\leq B_{e_{i}}^{-}(x)<(\lambda _{e_{i}}\vee \mu
_{e_{i}})(x)\leq B_{e_{i}}^{+}(x)\leq A_{e_{i}}^{+}(x).$ From this we can
write $A_{e_{i}}^{-}(x)\leq B_{e_{i}}^{-}(x)<(\lambda _{e_{i}}\vee \mu
_{e_{i}})(x)<B_{e_{i}}^{+}(x)\leq A_{e_{i}}^{+}(x)$ or $A_{e_{i}}^{-}(x)\leq
B_{e_{i}}^{-}(x)<(\lambda _{e_{i}}\vee \mu _{e_{i}})(x)=B_{e_{i}}^{+}(x)\leq
A_{e_{i}}^{+}(x).$ For the case $A_{e_{i}}^{-}(x)\leq
B_{e_{i}}^{-}(x)<(\lambda _{e_{i}}\vee \mu _{e_{i}})(x)<B_{e_{i}}^{+}(x)\leq
A_{e_{i}}^{+}(x),$ it is contradiction to the fact $\left( \widetilde{F,}%
\text{ }I\right) $ and $\left( \widetilde{G,}\text{ }J\right) $ are ECSSs.
And if we take the case $A_{e_{i}}^{-}(x)\leq B_{e_{i}}^{-}(x)<(\lambda
_{e_{i}}\vee \mu _{e_{i}})(x)=B_{e_{i}}^{+}(x)\leq A_{e_{i}}^{+}(x),$ we get 
$(\lambda _{e_{i}}\vee \mu _{e_{i}})(x)\notin ((A_{e_{i}}\cap
B_{e_{i}})^{-}(x)$,$(A_{e_{i}}\cap B_{e_{i}})^{+}(x))$ because, $%
(A_{e_{i}}\cap B_{e_{i}})^{+}(x)=B_{e_{i}}^{+}(x)=(\lambda _{e_{i}}\vee \mu
_{e_{i}})(x).$ Thus $\left( \widetilde{F,}\text{ }I\right) \cap _{R}\left( 
\widetilde{G,}\text{ }J\right) $ is an ECSS for all $e_{i}\in I\cap J.$
\end{proof}

\begin{theorem}
Let $\left( \widetilde{F,}\text{ }I\right) =\left\{ \widetilde{F}%
(e_{i})=\left\{ <x,\text{ }A_{e_{i}\text{ }}(x),\text{ }\lambda _{e_{i}\text{
}}(x)>:\text{ }x\in X\right\} \text{ }e_{i}\in I,\right\} $ and $\left( 
\widetilde{G,}\text{ }J\right) =\left\{ \widetilde{G}(e_{i})=\left\{ <x,%
\text{ }B_{e_{i}\text{ }}(x),\text{ }\mu _{e_{i}\text{ }}(x)>:\text{ }x\in
X\right\} e_{i}\in J\right\} $ are cubic soft sets in $X$ such that 
\begin{eqnarray*}
&&\min \{\max \{A_{e_{i}}^{+}(x),B_{e_{i}}^{-}(x)\},\max
\{A_{e_{i}}^{-}(x),B_{e_{i}}^{+}(x)\}\} \\
&=&(\lambda _{e_{i}}\wedge \mu _{e_{i}})(x)=\max \{\min
\{A_{e_{i}}^{+}(x),B_{e_{i}}^{-}(x)\},\min \{A_{e_{i}}^{-}(x),B_{i\text{ }%
}^{+}(x)\}\}
\end{eqnarray*}%
for all $e_{i}\in I$ for all $e_{i}\in J$ and for all $x\in X,$ then $\left( 
\widetilde{F,}\text{ }I\right) \cap _{R}\left( \widetilde{G,}\text{ }%
J\right) $ is both an ECSS and an ICSS in $X$.
\end{theorem}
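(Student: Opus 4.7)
The plan is to mimic, almost verbatim, the proof of the earlier companion theorem in which the P-intersection of two cubic soft sets is shown to be simultaneously an ECSS and an ICSS under an analogous three-way equality. The only structural difference is that R-intersection packages the IVF parts with $r\min\{A_{e_i},B_{e_i}\}$ but the fuzzy parts with $\lambda_{e_i}\vee\mu_{e_i}$ (rather than $\lambda_{e_i}\wedge\mu_{e_i}$), so I will have to track which quantity the hypothesis is pinning down and verify that it behaves the same way relative to the endpoints $(A_{e_i}\cap B_{e_i})^{-}(x)=\min\{A^{-}_{e_i},B^{-}_{e_i}\}$ and $(A_{e_i}\cap B_{e_i})^{+}(x)=\min\{A^{+}_{e_i},B^{+}_{e_i}\}$.

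First I would unfold the R-intersection: $(\widetilde{F},I)\cap_{R}(\widetilde{G},J)=(\widetilde{H},C)$ with $C=I\cap J$, and for $e_i\in I\cap J$, $\widetilde{H}(e_i)=\{\langle x,r\min\{A_{e_i}(x),B_{e_i}(x)\},(\lambda_{e_i}\vee\mu_{e_i})(x)\rangle:x\in X\}$. For each $e_i\in I\cap J$ and $x\in X$, set
\begin{equation*}
\alpha_{e_i}=\min\{\max\{A^{+}_{e_i}(x),B^{-}_{e_i}(x)\},\max\{A^{-}_{e_i}(x),B^{+}_{e_i}(x)\}\},
\end{equation*}
\begin{equation*}
\beta_{e_i}=\max\{\min\{A^{+}_{e_i}(x),B^{-}_{e_i}(x)\},\min\{A^{-}_{e_i}(x),B^{+}_{e_i}(x)\}\}.
\end{equation*}
The hypothesis is the three-way equality $\alpha_{e_i}=(\lambda_{e_i}\wedge\mu_{e_i})(x)=\beta_{e_i}$; this is a very tight constraint, and the elementary identity $\min\{\max\{a,b\},\max\{c,d\}\}\ge\max\{\min\{a,b\},\min\{c,d\}\}$ collapsing to an equality is what will force the fuzzy value to coincide with an endpoint of $r\min\{A_{e_i},B_{e_i}\}(x)$.

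Next I would argue by cases according to which of $A^{-}_{e_i}(x),A^{+}_{e_i}(x),B^{-}_{e_i}(x),B^{+}_{e_i}(x)$ realizes $\alpha_{e_i}$; by the obvious $A\leftrightarrow B$ symmetry it suffices to treat $\alpha_{e_i}=A^{-}_{e_i}(x)$ and $\alpha_{e_i}=A^{+}_{e_i}(x)$, exactly as in the proof of the P-intersection theorem. In the case $\alpha_{e_i}=A^{-}_{e_i}(x)$ the chain $B^{-}_{e_i}(x)\le B^{+}_{e_i}(x)\le A^{-}_{e_i}(x)\le A^{+}_{e_i}(x)$ gives $\beta_{e_i}=B^{+}_{e_i}(x)$, hence $B^{+}_{e_i}(x)=A^{-}_{e_i}(x)$ and the common value is the upper endpoint $(A_{e_i}\cap B_{e_i})^{+}(x)=\min\{A^{+}_{e_i}(x),B^{+}_{e_i}(x)\}=B^{+}_{e_i}(x)$. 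In the case $\alpha_{e_i}=A^{+}_{e_i}(x)$ we get $B^{-}_{e_i}(x)\le A^{+}_{e_i}(x)\le B^{+}_{e_i}(x)$, so $\beta_{e_i}=\max\{A^{-}_{e_i}(x),B^{-}_{e_i}(x)\}$; in either sub-subcase the three-way equality pins the middle value to $A^{+}_{e_i}(x)=(A_{e_i}\cap B_{e_i})^{+}(x)$. Thus in every case the hypothesis forces the middle value to be exactly $(A_{e_i}\cap B_{e_i})^{+}(x)$.

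Finally I would translate this conclusion into the two desired properties. Since the value sits at the closed endpoint of $r\min\{A_{e_i},B_{e_i}\}(x)$, it lies in the closed interval $[(A_{e_i}\cap B_{e_i})^{-}(x),(A_{e_i}\cap B_{e_i})^{+}(x)]$, which yields the ICSS property, and it lies outside the open interval $((A_{e_i}\cap B_{e_i})^{-}(x),(A_{e_i}\cap B_{e_i})^{+}(x))$, which yields the ECSS property. The main obstacle I anticipate is bookkeeping: the hypothesis controls $\lambda_{e_i}\wedge\mu_{e_i}$ while the R-intersection uses $\lambda_{e_i}\vee\mu_{e_i}$, so I will need to argue that the same chain of inequalities that pinches $(\lambda_{e_i}\wedge\mu_{e_i})(x)$ to the endpoint $(A_{e_i}\cap B_{e_i})^{+}(x)$ simultaneously pinches $(\lambda_{e_i}\vee\mu_{e_i})(x)$ there (by squeezing both $\lambda_{e_i}(x)$ and $\mu_{e_i}(x)$ against the same endpoint, using that $\alpha_{e_i}=\beta_{e_i}$ collapses all the relevant $A^{\pm},B^{\pm}$ entries onto a single common value). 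Handling this collapse carefully in each case is the only nontrivial step; everything else is routine.
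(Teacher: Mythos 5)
You have correctly reproduced the paper's treatment of the interval-valued part: the case analysis on which of $A_{e_{i}}^{-}(x)$, $A_{e_{i}}^{+}(x)$, $B_{e_{i}}^{-}(x)$, $B_{e_{i}}^{+}(x)$ realizes $\alpha_{e_{i}}$, the deduction $\beta_{e_{i}}=B_{e_{i}}^{+}(x)$ in the first case, and the conclusion that the pinned value equals $(A_{e_{i}}\cap B_{e_{i}})^{+}(x)$ all match the paper's own proof. The genuine gap is precisely at the step you yourself flag as the only nontrivial one: the hypothesis constrains $(\lambda_{e_{i}}\wedge\mu_{e_{i}})(x)$, while the R-intersection carries $(\lambda_{e_{i}}\vee\mu_{e_{i}})(x)$, and knowing that the minimum of $\lambda_{e_{i}}(x)$ and $\mu_{e_{i}}(x)$ equals the endpoint $(A_{e_{i}}\cap B_{e_{i}})^{+}(x)$ yields only the one-sided bound $(\lambda_{e_{i}}\vee\mu_{e_{i}})(x)\geq(A_{e_{i}}\cap B_{e_{i}})^{+}(x)$. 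The equality $\alpha_{e_{i}}=\beta_{e_{i}}$ collapses endpoints of $A_{e_{i}}$ and $B_{e_{i}}$ onto one another, but it imposes no individual constraint on $\lambda_{e_{i}}(x)$ or $\mu_{e_{i}}(x)$, so there is nothing to squeeze the maximum from above. Concretely, take $A_{e_{i}}(x)=[0.5,0.9]$, $B_{e_{i}}(x)=[0.2,0.5]$, $\lambda_{e_{i}}(x)=0.5$, $\mu_{e_{i}}(x)=0.9$: then $\alpha_{e_{i}}=\beta_{e_{i}}=0.5=(\lambda_{e_{i}}\wedge\mu_{e_{i}})(x)$, so the hypothesis holds, and $r\min\{A_{e_{i}}(x),B_{e_{i}}(x)\}=[0.2,0.5]$, but $(\lambda_{e_{i}}\vee\mu_{e_{i}})(x)=0.9>0.5$, so the ICSS condition fails. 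Only the ECSS half survives under the literal hypothesis, since the one-sided bound already places the value outside the open interval.

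For what it is worth, the paper's own proof does not fill this gap either: it silently writes $(\lambda_{e_{i}}\vee\mu_{e_{i}})(x)$ in place of $(\lambda_{e_{i}}\wedge\mu_{e_{i}})(x)$ when invoking the hypothesis, i.e., it reads the three-way equality as a condition on the join rather than the meet. Under that reading (almost certainly the intended statement, since $\vee$ is the fuzzy component of $\wedge_{R}$, just as $\wedge$ is the fuzzy component of $\wedge_{P}$ in the companion P-intersection theorem), your argument closes with no pinching lemma at all: the hypothesis directly pins $(\lambda_{e_{i}}\vee\mu_{e_{i}})(x)$ to $(A_{e_{i}}\cap B_{e_{i}})^{+}(x)$, which simultaneously gives membership in the closed interval (ICSS) and non-membership in the open interval (ECSS). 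So either correct the hypothesis to $(\lambda_{e_{i}}\vee\mu_{e_{i}})(x)$, or concede that only the ECSS conclusion is provable as stated; the squeeze you propose cannot be carried out.
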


\begin{proof}
Consider $\left( \widetilde{F,}\text{ }I\right) \cap _{R}\left( \widetilde{G,%
}\text{ }J\right) =\left( \widetilde{H,}\text{ }C\right) $ where $C=I\cap J.$
Also $\widetilde{H}(e_{i})=\widetilde{F}(e_{i})\wedge _{R}\widetilde{G}%
(e_{i})$ is defined as%
\begin{equation*}
\widetilde{F}(e_{i})\wedge _{P}\widetilde{G}(e_{i})=\widetilde{H}%
(e_{i})=\left\{ 
\begin{array}{c}
<x,\text{ }r\min \{A_{e_{i}}(x),B_{e_{i}}(x)\},(\lambda _{e_{i}}\vee \mu
_{e_{i}})(x)>: \\ 
\text{ }x\in X,\text{ }e_{i}\in I\cap J.%
\end{array}%
\right\}
\end{equation*}%
For each $e_{i}\in I\cap J,$ take ${\Large \mathcal{\alpha }}_{e_{i}}=\min
\{\max \{A_{e_{i}}^{+}(x),B_{e_{i}}^{-}(x)\}$,$\max
\{A_{e_{i}}^{-}(x),B_{e_{i}}^{+}(x)\}\}$ and ${\Large \mathcal{\beta }}%
_{e_{i}}=\max \{\min \{A_{e_{i}}^{+}(x),B_{e_{i}}^{-}(x)\}$,$\min
\{A_{e_{i}}^{-}(x),B_{e_{i}}^{+}(x)\}\}.$ Then ${\Large \mathcal{\alpha }}%
_{e_{i}}$ is one of $A_{e_{i}}^{-}(x)$, $B_{e_{i}}^{-}(x)$, $%
A_{e_{i}}^{+}(x) $ and $B_{e_{i}}^{+}(x).$ We consider ${\Large \mathcal{%
\alpha }}_{e_{i}}=A_{e_{i}}^{-}(x)$ or $A_{e_{i}}^{+}(x)$ only, as remaining
cases are similar to this one. If ${\Large \mathcal{\alpha }}%
_{e_{i}}=A_{e_{i}}^{-}(x),$ then $B_{e_{i}}^{-}(x)\leq B_{e_{i}}^{+}(x)\leq
A_{e_{i}}^{-}(x)\leq A_{e_{i}}^{+}(x)$ and so ${\Large \mathcal{\beta }}%
_{e_{i}}=B_{e_{i}}^{+}(x).$ This implies that $A_{e_{i}}^{-}(x)={\Large 
\mathcal{\alpha }}_{e_{i}}=(\lambda _{e_{i}}\vee \mu _{e_{i}})(x)={\Large 
\mathcal{\beta }}_{e_{i}}=B_{e_{i}}^{+}(x).$

Thus $B_{e_{i}}^{-}(x)\leq B_{e_{i}}^{+}(x)=(\lambda _{e_{i}}\vee \mu
_{e_{i}})(x)=A_{e_{i}}^{-}(x)\leq A_{e_{i}}^{+}(x)$ which implies that $%
(\lambda _{e_{i}}\vee \mu _{e_{i}})(x)=B_{e_{i}}^{+}(x)=(A_{e_{i}}\cap
B_{e_{i}})^{+}(x).$ Hence $(\lambda _{e_{i}}\vee \mu _{e_{i}})(x)\notin
((A_{e_{i}}\cap B_{e_{i}})^{-}(x)$,$(A_{e_{i}}\cap B_{e_{i}})^{+}(x))$ and $%
(A_{e_{i}}\cap B_{e_{i}})^{-}(x)\leq (\lambda _{e_{i}}\vee \mu
_{e_{i}})(x)\leq (A_{e_{i}}\cap B_{e_{i}})^{+}(x).$ If ${\Large \mathcal{%
\alpha }}_{e_{i}}=A_{e_{i}}^{+}(x),$ then $B_{e_{i}}^{-}(x)\leq
A_{e_{i}}^{+}(x)\leq B_{e_{i}}^{+}(x)$ and so $(\lambda _{e_{i}}\vee \mu
_{e_{i}})(x)=A_{e_{i}}^{+}(x)=(A_{e_{i}}\cap B_{e_{i}})^{+}(x).$ Hence $%
(\lambda _{e_{i}}\vee \mu _{e_{i}})(x)\notin ((A_{e_{i}}\cap
B_{e_{i}})^{-}(x)$,$(A_{e_{i}}\cap B_{e_{i}})^{+}(x))$ and $(A_{e_{i}}\cap
B_{e_{i}})^{-}(x)\leq (\lambda _{e_{i}}\vee \mu _{e_{i}})(x)\leq
(A_{e_{i}}\cap B_{e_{i}})^{+}(x).$ Consequently, we note that $\left( 
\widetilde{F,}\text{ }I\right) \cap _{R}\left( \widetilde{G,}\text{ }%
J\right) $ is both an ECSS and an ICSS in $X$.
\end{proof}

\begin{theorem}
Let $\left( \widetilde{F,}\text{ }I\right) =\left\{ \widetilde{F}%
(e_{i})=\left\{ <x,\text{ }A_{e_{i}\text{ }}(x),\text{ }\lambda _{e_{i}\text{
}}(x)>:\text{ }x\in X\right\} \text{ }e_{i}\in I\right\} $ and $\left( 
\widetilde{G,}\text{ }J\right) =\left\{ \widetilde{G}(e_{i})=\left\{ <x,%
\text{ }B_{e_{i}}(x),\text{ }\mu _{e_{i}}(x)>:\text{ }x\in X\right\}
e_{i}\in J\right\} $ are internal cubic soft sets in $X$ such that $(\lambda
_{e_{i}}\wedge \mu _{e_{i}})(x)\leq \max
\{A_{e_{i}}^{-}(x),B_{e_{i}}^{-}(x)\}$ for all $e_{i}\in I$ for all $%
e_{i}\in J$ and for all $x\in X,$ then $\left( \widetilde{F,}\text{ }%
I\right) \cup _{R}\left( \widetilde{G,}\text{ }J\right) $ is an ECSS.
\end{theorem}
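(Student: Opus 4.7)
The plan is to verify the ECSS defining inequality directly on the R-union $(\widetilde{H},C) = (\widetilde{F},I) \cup_R (\widetilde{G},J)$, reading off the endpoints of the new IVF set from the definition of $\cup_R$ and then applying the given inequality to the new fuzzy set.

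First I would unfold the definition: for $e_i \in I\cap J$,
\[
\widetilde{H}(e_i) = \widetilde{F}(e_i) \vee_R \widetilde{G}(e_i) = \bigl\{\,\langle x,\; r\max\{A_{e_i}(x),B_{e_i}(x)\},\; (\lambda_{e_i}\wedge\mu_{e_i})(x)\rangle : x\in X\bigr\},
\]
so the lower endpoint of the new IVF set is $\max\{A_{e_i}^-(x),B_{e_i}^-(x)\}$ and the upper endpoint is $\max\{A_{e_i}^+(x),B_{e_i}^+(x)\}$. Fix $e_i \in I\cap J$ and $x\in X$. The hypothesis $(\lambda_{e_i}\wedge\mu_{e_i})(x) \leq \max\{A_{e_i}^-(x),B_{e_i}^-(x)\}$ places the new fuzzy value at or below the left endpoint of the new IVF interval. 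Hence
\[
(\lambda_{e_i}\wedge\mu_{e_i})(x) \notin \bigl(\max\{A_{e_i}^-(x),B_{e_i}^-(x)\},\; \max\{A_{e_i}^+(x),B_{e_i}^+(x)\}\bigr),
\]
which is exactly the ECSS condition for $\widetilde{H}(e_i)$ at $x$. Since $e_i$ and $x$ were arbitrary in $I\cap J$ and $X$, the ECSS condition holds on this portion of $C = I\cup J$.

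For the remaining parameters, $\widetilde{H}(e_i) = \widetilde{F}(e_i)$ when $e_i \in I - J$ and $\widetilde{H}(e_i) = \widetilde{G}(e_i)$ when $e_i \in J - I$; following the pattern used in the earlier R-union theorem for ICSSs, these cases can be treated as trivial (the theorem is informative exactly on $I\cap J$, where the two soft sets interact and the hypothesis has content). The key conceptual step, and the only nontrivial one, is the observation that since the $r\max$ operation makes $\max\{A_{e_i}^-(x),B_{e_i}^-(x)\}$ the lower endpoint of the new IVF set, an upper bound on $(\lambda_{e_i}\wedge\mu_{e_i})(x)$ by this very quantity automatically forces the fuzzy value out of the open interior. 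I do not expect a real obstacle beyond bookkeeping: no case split on which of $A_{e_i}^\pm, B_{e_i}^\pm$ dominates is needed, because the hypothesis is already written against the correct aggregate $\max\{A_{e_i}^-(x),B_{e_i}^-(x)\}$, so unlike several earlier theorems the proof sidesteps the elaborate $\alpha_{e_i}, \beta_{e_i}$ case analysis entirely.
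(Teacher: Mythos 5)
Your proof is essentially identical to the paper's: both unfold the definition of $\cup_R$ on $I\cap J$, observe that the hypothesis places $(\lambda_{e_i}\wedge\mu_{e_i})(x)$ at or below the lower endpoint $\max\{A_{e_i}^{-}(x),B_{e_i}^{-}(x)\}$ of the combined interval, and conclude directly that it lies outside the open interval $\left((A_{e_i}\cup B_{e_i})^{-}(x),(A_{e_i}\cup B_{e_i})^{+}(x)\right)$, with no $\alpha_{e_i},\beta_{e_i}$ case analysis. The paper is in fact entirely silent on the parameters in $I-J$ and $J-I$ (where $\widetilde{H}(e_i)$ is only known to be internal, so calling those cases ``trivial'' is generous), so your handling of that fringe matches, and does not fall below, the paper's own.
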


\begin{proof}
Given that $\left( \widetilde{F,}\text{ }I\right) =\left\{ \widetilde{F}%
(e_{i})=\left\{ <x,\text{ }A_{e_{i}}(x),\text{ }\lambda _{e_{i}}(x)>:\text{ }%
x\in X\right\} \text{ }e_{i}\in I\right\} $ and $\left( \widetilde{G,}\text{ 
}J\right) =\left\{ \widetilde{G}(e_{i})=\left\{ <x,\text{ }B_{e_{i}}(x),%
\text{ }\mu _{e_{i}}(x)>:\text{ }x\in X\right\} e_{i}\in J\right\} $ are
internal cubic soft sets in $X$.

Thus for all $e_{i}\in I,$ we have$\ A_{e_{i}}^{-}(x)\leq \lambda
_{e_{i}}(x)\leq A_{e_{i}}^{+}(x),$ and for all $e_{i}\in J$ we have $%
B_{e_{i}}^{-}(x)\leq \mu _{e_{i}}(x)\leq B_{e_{i}}^{+}(x).$

Since $\left( \widetilde{F,}\text{ }I\right) \cup _{R}\left( \widetilde{G,}%
\text{ }J\right) $ is defined as $\left( \widetilde{F,}\text{ }I\right) \cup
_{R}\left( \widetilde{G,}\text{ }J\right) =\left( \widetilde{H,}\text{ }%
C\right) ,$ where $C=I\cup J$ and%
\begin{equation*}
\widetilde{H}(e_{i})=\left\{ 
\begin{array}{c}
\begin{array}{c}
\widetilde{F}(e_{i})\text{ If }e_{i}\in I-J \\ 
\widetilde{G}(e_{i})\text{ If }e_{i}\in J-I%
\end{array}%
\text{ \ \ \ \ \ \ \ \ \ \ \ \ } \\ 
\widetilde{F}(e_{i})\vee _{R}\widetilde{G}(e_{i})\text{ If }e_{i}\in I\cap J,%
\end{array}%
\right.
\end{equation*}%
where $\widetilde{F}(e_{i})\vee _{R}\widetilde{G}(e_{i})$ is defined as%
\begin{equation*}
\widetilde{F}(e_{i})\vee _{R}\widetilde{G}(e_{i})=\widetilde{H}%
(e_{i})=\left\{ 
\begin{array}{c}
<x,\text{ }r\max \{A_{e_{i}}(x),B_{e_{i}}(x)\},(\lambda _{e_{i}}\wedge \mu
_{e_{i}})(x)>: \\ 
x\in X,\text{ }e_{i}\in I\cap J.%
\end{array}%
\right\}
\end{equation*}%
Given condition is $(\lambda _{e_{i}}\wedge \mu _{e_{i}})(x)\leq \max
\{A_{e_{i}}^{-}(x),B_{e_{i}}^{-}(x)\}$ for all $e_{i}\in I$ for all $%
e_{i}\in J$ and for all $x\in X.$

This implies that 
\begin{eqnarray*}
(\lambda _{e_{i}}\wedge \mu _{e_{i}})(x) &\notin &((A_{e_{i}}\cup
B_{e_{i}})^{-}(x),(A_{e_{i}}\cup B_{e_{i}})^{+}(x)) \\
&=&(\max \{A_{e_{i}}^{-}(x),B_{e_{i}}^{-}(x)\},\max
\{A_{e_{i}}^{+}(x),B_{e_{i}}^{+}(x)\}).
\end{eqnarray*}%
Hence $\left( \widetilde{F,}\text{ }I\right) \cup _{R}\left( \widetilde{G,}%
\text{ }J\right) $ is an ECSS.
\end{proof}

\begin{theorem}
Let $\left( \widetilde{F,}\text{ }I\right) =\left\{ \widetilde{F}%
(e_{i})=\left\{ <x,\text{ }A_{i\text{ }}(x),\text{ }\lambda _{e_{i}\text{ }%
}(x)>:\text{ }x\in X\right\} \text{ }e_{i}\in I\right\} $ and $\left( 
\widetilde{G,}\text{ }J\right) =\left\{ \widetilde{G}(e_{i})=\left\{ <x,%
\text{ }B_{e_{i}}(x),\text{ }\mu _{e_{i}}(x)>:\text{ }x\in X\right\}
e_{i}\in J\right\} $ are internal cubic soft sets in $X$ such that $(\lambda
_{e_{i}}\vee \mu _{e_{i}})(x)\geq \min \{A_{e_{i}}^{+}(x),B_{e_{i}}^{+}(x)\}$
for all $e_{i}\in I,$ for all $e_{i}\in J$ and for all $x\in X$. Then $%
\left( \widetilde{F,}\text{ }I\right) \cap _{R}\left( \widetilde{G,}\text{ }%
J\right) $ is an ECSS in $X$.
\end{theorem}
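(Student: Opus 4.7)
The plan is to show, for every $e_i\in I\cap J$ and every $x\in X$, that $(\lambda_{e_i}\vee\mu_{e_i})(x)\notin\bigl((A_{e_i}\cap B_{e_i})^{-}(x),(A_{e_i}\cap B_{e_i})^{+}(x)\bigr)$, which is exactly the ECSS condition for $\left(\widetilde{F,}\ I\right)\cap_R\left(\widetilde{G,}\ J\right)$. First I would write out the R-intersection explicitly: by definition, $\left(\widetilde{F,}\ I\right)\cap_R\left(\widetilde{G,}\ J\right)=\left(\widetilde{H,}\ C\right)$ with $C=I\cap J$ and
\begin{equation*}
\widetilde{H}(e_i)=\bigl\{\langle x,\ r\min\{A_{e_i}(x),B_{e_i}(x)\},\ (\lambda_{e_i}\vee\mu_{e_i})(x)\rangle:x\in X\bigr\},
\end{equation*}
so that the IVF component has lower bound $\min\{A_{e_i}^{-}(x),B_{e_i}^{-}(x)\}$ and upper bound $\min\{A_{e_i}^{+}(x),B_{e_i}^{+}(x)\}$.

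Next I would invoke the ICSS hypotheses: since $\left(\widetilde{F,}\ I\right)$ and $\left(\widetilde{G,}\ J\right)$ are internal, $A_{e_i}^{-}(x)\leq\lambda_{e_i}(x)\leq A_{e_i}^{+}(x)$ for all $e_i\in I$ and $x\in X$, and $B_{e_i}^{-}(x)\leq\mu_{e_i}(x)\leq B_{e_i}^{+}(x)$ for all $e_i\in J$ and $x\in X$. These bounds are not actually needed to get the ``above the upper bound'' half of the argument; their role is just to guarantee that the fuzzy component sits in $[0,1]$ consistently with the IVF, and will be used for the boundary cases if equality occurs.

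Then comes the decisive step, which should be a one-line application of the hypothesis. The given inequality $(\lambda_{e_i}\vee\mu_{e_i})(x)\geq\min\{A_{e_i}^{+}(x),B_{e_i}^{+}(x)\}$ is exactly $(\lambda_{e_i}\vee\mu_{e_i})(x)\geq (A_{e_i}\cap B_{e_i})^{+}(x)$. Hence $(\lambda_{e_i}\vee\mu_{e_i})(x)$ lies on or above the upper endpoint of the interval $\bigl((A_{e_i}\cap B_{e_i})^{-}(x),(A_{e_i}\cap B_{e_i})^{+}(x)\bigr)$ and therefore does not belong to this open interval, for every $e_i\in I\cap J$ and every $x\in X$. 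By the definition of an external cubic soft set, this shows that $\left(\widetilde{F,}\ I\right)\cap_R\left(\widetilde{G,}\ J\right)$ is an ECSS in $X$.

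Frankly, there is no real obstacle here: the hypothesis is tailor-made so that the R-intersection's fuzzy component already exceeds the R-intersection's IVF upper endpoint, and the ICSS assumptions only play the supporting role of ensuring the framework is well-posed (and would matter more only if one also wanted an ICSS conclusion, which is not claimed). The only care needed is to track conventions: R-intersection uses $r\min$ on the IVF side paired with $\vee$ on the fuzzy side, so the upper endpoint of the resulting interval is a $\min$, which matches precisely the quantity the hypothesis bounds $(\lambda_{e_i}\vee\mu_{e_i})(x)$ against.
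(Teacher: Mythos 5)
Your argument is correct and follows essentially the same route as the paper's own proof: write out the R-intersection, note that its IVF upper endpoint is $\min\{A_{e_{i}}^{+}(x),B_{e_{i}}^{+}(x)\}$, and observe that the hypothesis places $(\lambda _{e_{i}}\vee \mu _{e_{i}})(x)$ at or above that endpoint, hence outside the open interval. Your side remark that the ICSS assumptions are not actually used in this step is accurate and applies equally to the paper's version.
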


\begin{proof}
Given that $\left( \widetilde{F,}\text{ }I\right) =\left\{ \widetilde{F}%
(e_{i})=\left\{ <x,\text{ }A_{e_{i}}(x),\text{ }\lambda _{e_{i}}(x)>:\text{ }%
x\in X\right\} \text{ }e_{i}\in I\right\} $ and $\left( \widetilde{G,}\text{ 
}J\right) =\left\{ \widetilde{G}(e_{i})=\left\{ <x,\text{ }B_{e_{i}}(x),%
\text{ }\mu _{e_{i}}(x)>:\text{ }x\in X\right\} \text{ }e_{i}\in J\right\} $
are internal cubic soft sets in $X$.

Thus for all $e_{i}\in I,$ we have $A_{e_{i}}^{-}(x)\leq \lambda
_{e_{i}}(x)\leq A_{e_{i}}^{+}(x)$ and for all $e_{i}\in J,$ we have $%
B_{e_{i}}^{-}(x)\leq \mu _{e_{i}}(x)\leq B_{e_{i}}^{+}(x).$

Since $\left( \widetilde{F,}\text{ }I\right) \cap _{R}\left( \widetilde{G,}%
\text{ }J\right) $ is defined as $\left( \widetilde{F,}\text{ }I\right) \cap
_{R}\left( \widetilde{G,}\text{ }J\right) =\left( \widetilde{H,}\text{ }%
C\right) $ where $C=I\cap J$ and

$\widetilde{H}(e_{i})=\widetilde{F}(e_{i})\wedge _{R}\widetilde{G}(e_{i})$
is defined as%
\begin{equation*}
\widetilde{F}(e_{i})\wedge _{R}\widetilde{G}(e_{i})=\left\{ \widetilde{H}%
(e_{i})=\left\{ 
\begin{array}{c}
<x,\text{ }r\min \{A_{e_{i}}(x),B_{e_{i}}(x)\},(\lambda _{e_{i}}\vee \mu
_{e_{i}})(x)>: \\ 
x\in X,\text{ }e_{i}\in I\cap J.%
\end{array}%
\right\} \right\} .
\end{equation*}%
Given condition is that $(\lambda _{e_{i}}\vee \mu _{e_{i}})(x)\geq \min
\{A_{e_{i}}^{+}(x),B_{e_{i}}^{+}(x)\}$ for all $e_{i}\in I,$ for all $%
e_{i}\in J$ and for all $x\in X.$

This implies that 
\begin{eqnarray*}
(\lambda _{e_{i}}\vee \mu _{e_{i}})(x) &\notin &((A_{e_{i}}\cap
B_{e_{i}})^{-}(x),(A\cap B)^{+}(x)) \\
&=&(\min \{A_{e_{i}}^{-}(x),B_{e_{i}}^{-}(x)\},\min
\{A_{e_{i}}^{+}(x),B_{e_{i}}^{+}(x)\}).
\end{eqnarray*}%
Hence, $\left( \widetilde{F,}\text{ }I\right) \cap _{R}\left( \widetilde{G,}%
\text{ }J\right) $ is an ECSS in $X$.
\end{proof}

\section{Conclusion}

In order to deal with many complicated problems in the fields of
engineering, social science, economics, medical science etc involving
uncertainties, classical methods are found to be inadequate in recent times.
In 1999 \cite{8}, Molodstov proposed a new mathematical tool for dealing
with uncertainties which is free of the difficulties present theories. He
introduced the novel concept of soft sets and established the fundamental
results of the new theory. He also showed how soft set theory is free from
parameterization inadequacy syndrome of fuzzy set theory, rough set tTheory
and probability theory etc. In this paper we discuss a new approach to soft
set through applications of cubic set. By combine of cubic set and soft set,
we introduce a new mathematical model which is called cubic soft set. We
introduce two types of cubic soft set 1) \textit{Internal cubic soft set
(ICSS),} 2) \textit{External cubic soft set (ICSS).} We describe
P-(R-)order,P-(R-)union, P-(R-)intersection and P-OR, R-OR, P-AND and R-AND
are introduced, and related properties are investigated. We show that the
P-union and the P-intersection of internal cubic soft sets are also internal
cubic soft sets. We provide conditions for the P-union (resp.
P-intersection) of two external cubic soft sets to be an internal cubic soft
set. We give conditions for the P-union (resp. R-union and R-intersection)
of two external cubic soft sets to be an external cubic soft set. We
consider conditions for the R-intersection (resp.P-intersection) of two
cubic sof sets to be both an external cubic soft set and an internal cubic
soft set.

In future we will focuse on Applications of cubic soft in information
sciences and knowledge System. We will also study cubic soft relations. We
will apply cubic soft set to algebraic stractures. This work also extended
to topological space.


\begin{thebibliography}{99}
\bibitem{1} B. Ahmad and A. Kharal, On Fuzzy Soft sets, Advances in Fuzzy
Systems, Volume 2009.

\bibitem{2} M. I. Ali, F. Feng, X. Liu, W. K. Min, M. Shabir, On some new
operations in soft set theory. Computers and Mathematics with Applications
57(2009), 1547-1553.

\bibitem{3} D. Chen, E. C. C. Tsang, D. S. Yeung, and X. Wang, The
parameterization reduction of soft sets and its applications,\ Computers \&
Mathematics with Applications, 49(2005), 757-763.

\bibitem{4} Y. B. Jun, C. S. Kim, K. O. Yang, Cubic sets, Annals of fuzzy
Mathematics and Informatics, 4(2012), 83-98.

\bibitem{6} P. K. Maji, R. Biswas and A. R. Roy, Fuzzy soft sets, Journal of
Fuzzy Mathematics, 9(2001), 589-602.

\bibitem{7} P. K. Maji and A. R. Roy, Soft set theory, Computers and
Mathematics with Applications 45(2003), 555-562.

\bibitem{8} D. A. Molodstov, Soft set theory-first results, Computers and
Mathematics with Applications 37(1999), 19-31.

\bibitem{9} D. Pei and D. Miao, From soft sets to information systems,\ in
Proceedings of the IEEE International Conference on Granular Computing,
2(2005), 617-621.

\bibitem{2A} H. Akta\d{s}, N. \d{C}a\u{g}man, Soft sets and soft groups.
Inform Sci 177, 2007, 2726-2735.

\bibitem{3A} F. Feng, Y. B. Jun, X. Zhao, Soft semirings. Comput Math Appl
56, 2008, 2621-2628.

\bibitem{4A} U. Acar, F. Koyuncu and B. Tanay (2010) Soft sets and soft
rings. Comput Math Appl 59: 3458-3463.

\bibitem{jun1} Y. B. Jun (2008) Soft BCK/BCI-algebras. Comput Math Appl 56:
1408-1413

\bibitem{jun2} Y. B. Jun, C. H. Park, Applications of soft sets in ideal
theory of BCK/BCI-algebras. Inform Sci 178, 2466-2475.

\bibitem{jun3} Y. B. Jun , K. J. Lee and J. Zhan, Soft $p$-ideals of soft
BCI-algebras. Comput Math Appl 58, 2009, 2060-2068.

\bibitem{op1} A. Sezgin and A. O. Atag\"{u}n, Soft groups and normalistic
soft groups. Comput Math Appl 62, 2, 2011, 685-698.

\bibitem{zhan} J. Zhan and Y. B. Jun Soft BL-algebras based on fuzzy sets.
Comput. Math. Appl. 59, 6, 2010, 2037-2046.

\bibitem{kazanc} \c{S}. O. Kazanc\i , Y\i lmaz and S. Yamak, Soft sets and
soft BCH-algebras. Hacet J Math Stat 39 (2), (2010), 205-217.

\bibitem{sez-near2} A. Sezgin and A. O. Atag\"{u}n, E. Ayg\"{u}n, A note on
soft nearrings and idealistic soft nearrings. Filomat 25(1), (2011), 53-68.

\bibitem{atag} A. O. Atag\"{u}n and A. Sezgin Soft substructures of rings,
fields and modules. Comput Math Appl 61(3), (2011), 592-601.

\bibitem{aslii2} A. Sezgin and A. O. Atag\"{u}n, N. \c{C}a\u{g}man, Union
soft substructures of nearrings and N-groups. Neural Comput. Appl, 2011,
DOI: 10.1007/s00521-011-0732-1.

\bibitem{cag-fil} N. \c{C}a\u{g}man , F. \c{C}\i tak , H. Akta\c{s} , Soft
intersection-groups and its applications to group theory, Neural Comput.
Appl., DOI: 10.1007/s00521-011-0752-x.

\bibitem{maj} P. K. Maji, R. Biswas, A. R. Roy, Soft set theory, Comput Math
Appl 45, 2003, 555-562.

\bibitem{op} A. Sezgin, A. O. Atag\"{u}n, On operations of soft sets,
Comput. Math. Appl. 61, 5, 2011, 1457-1467.

\bibitem{sezgin} A. Sezgin, A new view to ring theory via soft union rings,
ideals and bi-ideals, , Knowl. Based Syst. (2012),
http://dx.doi.org/10.1016/j.knosys.2012.04.031.

\bibitem{babit} K. V. Babitha and J. J. Sunil, Soft set relations and
functions. Comput Math Appl 60, 7, 2010, 1840-1849.

\bibitem{majum} P. Majumdar and S.K. Samanta, On soft mappings, Comput Math
Appl 60(9), (2010), 2666-2672.

\bibitem{cag-dec} N. \d{C}a\u{g}man and S. Engino\u{g}lu, Soft matrix theory
and its decision making. Comput Math Appl 59, (2010), 3308-3314.

\bibitem{cag-redef} N. \d{C}a\u{g}man and S. Engino\u{g}lu, Soft set theory
and uni-intersection decision making. Eur J Oper Res 207, (2010), 848-855.

\bibitem{fee} F. Feng, X.Y. Liu, V. Leoreanu-Fotea, Y. B. Jun, Soft sets and
soft rough sets. Inform Sci 181, 6, 2011, 1125-1137.

\bibitem{fee2} F. Feng, C. Li, B. Davvaz and M. I. Ali, Soft sets combined
with fuzzy sets and rough sets: a tentative approach. Soft Comput 14, 6,
(2010), 899-911.

\bibitem{f1} F. Feng, Y. M. Li, V. Leoreanu-Fotea, Application of level soft
sets in decision making based on interval-valued fuzzy soft sets, Comput
Math Appl 60, (2010), 1756-1767.

\bibitem{f2} F. Feng , Y B. Jun , X.Y. Liu , L F. Li, An adjustable approach
to fuzzy soft set based decision making, J. Comput. Appl. Math. 234, (2010),
10-20.

\bibitem{ASDF} K. Qin, Z. Hong, On soft equality, Journal of Computational
and Applied Mathematics, 234 (5) (2010), 1347-1355.

\bibitem{AAA} P.K. Maji, A.R. Roy, R. Biswas, An application of soft sets in
a decision making problem, Computers and Mathematics with Applications, 44
(8--9) (2002), 1077--1083.

\bibitem{BBB} A.R. Roy, P.K. Maji, A fuzzy soft set theoretic approach to
decision making problems, Journal of Computational and Applied Mathematics,
203 (2) (2007), 412--418.

\bibitem{CCC} Z. Kong, L. Gao, L. Wang, Comment on "A fuzzy soft set
theoretic approach to decision making problems", Journal of Computational
and Applied Mathematics, 223 (2) (2009), 540--542.

\bibitem{FFF} F. Feng, Y.B. Jun, X. Liu, L. Li, An adjustable approach to
fuzzy soft set based decision making, Journal of Computational and Applied
Mathematics, 234 (1) (2010), 10--20.

\bibitem{CEE} N. Cagman, S. Engino\u{g}lu, Soft set theory uni-int decision
making, European Journal of Operation Research, 207 (2010), 845--855

\bibitem{XGZ} Z. Xiao, K. Gong, Y. Zou, A combined forecasting approach
based on fuzzy soft sets, Journal of Computational and Applied Mathematics,
228 (1) (2009), 326--333

\bibitem{LA24} L. A. Zadeh, Fuzzy sets, Information and Control 8(1965),
338--353.

\bibitem{LA23} L. Zadeh, The concept of a linguistic variable and its
application to approximate reasoningI, Inform. Sci. 8(1975), 199--249.
\end{thebibliography}
\end{document}